\newtheorem{theorem}{Theorem}[section]
\newtheorem{conj}{Conjecture}
\newtheorem{corollary}[theorem]{Corollary}
\newtheorem{lemma}[theorem]{Lemma}
\newtheorem{proposition}[theorem]{Proposition}
\newtheorem{claim}{Claim}
\theoremstyle{definition}
\newtheorem{definition}[theorem]{Definition}
\newtheorem{example}[theorem]{Example}
\newtheorem{remark}[theorem]{Remark}
\newcommand{\Z}{\mathds{Z}}
\newcommand{\Q}{\mathds{Q}}
\newcommand{\R}{\mathds{R}}
\newcommand{\C}{\mathds{C}}
\newcommand{\F}{\mathbb{F}}
\newcommand{\pr}{\operatorname{pr}}
\newcommand{\ord}{\operatorname{ord}}
\newcommand{\id}{\operatorname{id}}
\newcommand{\diag}{\operatorname{diag}}
\newcommand{\LC}{\mathbb{C}[t^{\pm 1}]}
\newcommand{\Bl}{\operatorname{Bl}}
\renewcommand{\setminus}{\smallsetminus}
\newcommand{\bigsharp}{\mathop{\mathchoice
  {\vcenter{\hbox{\LARGE$\#$}}}
  {\vcenter{\hbox{\large$\#$}}}
  {\vcenter{\hbox{\footnotesize$\#$}}}
  {\vcenter{\hbox{\scriptsize$\#$}}}
}\displaylimits}
\DeclareSymbolFont{EulerScript}{U}{eus}{m}{n}
\DeclareSymbolFontAlphabet\mathscr{EulerScript}
\begin{document}

%\begin{frontmatter}
\title{Non-slice linear combinations of iterated torus knots}
%\title{Metabelian Blanchfield pairings and non-slice linear combinations of algebraic knots}
\author{Anthony Conway}
\address{Max-Planck-Institut f\"ur Mathematik Vivatsgasse 7, 53111 Bonn, Germany}
\email{anthonyyconway@gmail.com}
\author{Min Hoon Kim}
\address{Department of Mathematics, Chonnam National University, Gwangju 61186, Republic of Korea}
\email{minhoonkim@jnu.ac.kr}
\author{Wojciech Politarczyk}
\address{Institute of Mathematics, University of Warsaw, ul. Banacha 2,
02-097 Warsaw, Poland}
\email{wpolitarczyk@mimuw.edu.pl}
\maketitle

\begin{abstract}
In 1976, Rudolph asked whether algebraic knots are linearly independent in the knot concordance group.
This paper uses twisted Blanchfield pairings to answer this question in the affirmative for new large families of algebraic knots.
\end{abstract}

\section{Introduction}
A knot is \emph{algebraic} if it arises as a link of an isolated singularity of a complex curve. 
Algebraic knots are special cases of iterated torus knots.
In 1976, Rudolph \cite{Rudolph} asked for a description of the subgroup of the knot concordance group generated by algebraic knots.
For ease of reference, we refer to this question as a conjecture.
%made the following conjecture.

\begin{conj}[Rudolph's Conjecture {\cite{Rudolph}}]\label{conj:rudolph-conj}
  The set of algebraic knots is linearly independent in the smooth knot concordance group~\(\mathcal{C}\).
\end{conj}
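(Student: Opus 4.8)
The plan is to reduce Conjecture~\ref{conj:rudolph-conj} to a non-sliceness statement and then attack that statement with twisted Blanchfield pairings. Fix pairwise distinct algebraic knots $K_1,\dots,K_n$ and nonzero integers $a_1,\dots,a_n$; one must show the linear combination $J=\#_{i=1}^n a_i K_i$ is not even topologically slice. Since an algebraic knot is an iterated torus knot, i.e.\ is built from a torus knot by repeated cabling, the first task is to record how a cabling operation, viewed as a satellite construction, transforms the relevant algebraic invariant; this reduces all subsequent computations to the torus knots $T_{p,q}$.

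The obstruction I would use is of Casson--Gordon type in its metabelian, twisted-Blanchfield incarnation (following Casson--Gordon, Miller--Powell, and subsequent refinements): to a knot $K$, a prime power $m=p^k$, and a character $\chi\colon H_1(\Sigma_m(K))\to\Z/p^k$ one associates a metabelian representation of $\pi_1(S^3\setminus K)$ and a twisted Blanchfield pairing $\Bl^\chi_K$ over $\LC$, localised away from $t-1$. The key input is the sliceness obstruction: if $J$ is slice, then for a suitable $m$ and $\chi$ the form $\Bl^\chi_J$ is metabolic, with a metabolizer that is invariant under the relevant deck transformation and compatible with the $p$-primary decomposition. Because $\Bl^\chi$ is additive under connected sum and behaves predictably under orientation reversal, sliceness of $J$ becomes a metabolicity statement about a direct sum of copies of the pairings $\Bl^\chi_{K_i}$.

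Next I would compute $\Bl^\chi_{T_{p,q}}$ explicitly. The Milnor-type decomposition of the Alexander module of $T_{p,q}$ over $\C$ should split $\Bl^\chi_{T_{p,q}}$ as a direct sum $\bigoplus_\lambda \mathfrak{B}_{\lambda,\ell_\lambda}$ of ``basic'' $\LC$-linking forms supported at roots of unity $\lambda$, with multiplicities $\ell_\lambda$ read off from the jumps of the associated Tristram--Levine-type signature function, hence from an explicit lattice-point count governed by $p$, $q$, and the order of $\chi$. Feeding these torus-knot answers through the satellite formula then yields a normal form for $\Bl^\chi_{K_i}$ for every iterated torus knot $K_i$ and every admissible character.

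Finally, linear independence would come from a numerical invariant of $\LC$-linking forms --- for a fixed root of unity $\lambda$, the mod $2$ count (or a signature-weighted count) of the basic summands $\mathfrak{B}_{\lambda,\bullet}$ after localisation --- which is additive under $\oplus$ and vanishes on metabolic forms. One orders $K_1,\dots,K_n$ by complexity and, for the top knot $K_{i_0}$, chooses $p$ and $\chi$ so that a distinguished root of unity $\lambda_0$ occurs with odd multiplicity in $\Bl^\chi_{K_{i_0}}$ but does not occur at all in $\Bl^\chi_{K_i}$ for $i\neq i_0$; this is where the arithmetic of the cabling parameters enters, $p$ being chosen so as not to divide the parameters of the lower knots. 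Since $a_{i_0}\neq 0$, the contribution of $a_{i_0}K_{i_0}$ survives, the invariant of $\Bl^\chi_J$ is nonzero, and metabolicity fails, so $J$ is not slice. The main obstacle is precisely this last step: one must simultaneously control (a) the existence of a prime $p$ and character isolating $K_{i_0}$, (b) the multiplicities $\ell_\lambda$ that survive localisation under the satellite formula, and (c) the constraint that the metabolizer produced by sliceness be deck-invariant --- and it is the interplay of these three constraints, rather than any single computation, that I expect to force the restriction to large families rather than all algebraic knots at once.
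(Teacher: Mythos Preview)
The statement you are addressing is a \emph{conjecture}, not a theorem; the paper does not prove it, and indeed Rudolph's Conjecture remains open. What the paper actually establishes is Theorem~\ref{thm:Main}: linear independence of the restricted family $\mathcal{S}_p$ of iterated torus knots $T(p,q_1;\ldots;p,q_\ell)$ for a \emph{fixed} prime power $p$, with arithmetic constraints on the $q_i$ (in particular $q_\ell$ prime and coprime to the other $q_i$). Your proposal is therefore not a proof of anything the paper proves, and as written it is not a proof of the conjecture either --- as you yourself concede in your final sentence, where you anticipate being ``force[d]\ldots to large families rather than all algebraic knots at once.''

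Concretely, your outline breaks at the isolation step. You propose to order the $K_i$ by complexity and choose a prime $p$ and character $\chi$ so that a distinguished root of unity $\lambda_0$ appears in $\Bl^\chi_{K_{i_0}}$ but in none of the others. For arbitrary algebraic knots $K_i = T(p_{i,1},q_{i,1};\ldots;p_{i,\ell_i},q_{i,\ell_i})$ with unrelated cabling parameters there is no reason such a $p$ exists: the Casson--Gordon machinery requires $p$ to be a prime power, and the branched-cover and twisted-polynomial computations (cf.\ Proposition~\ref{prop:TwistedPolyIntro}) are only tractable when the same $p$ governs every cabling. This is exactly why the paper fixes a single prime power $p$ from the outset and restricts to $\mathcal{S}_p$. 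Moreover, even within $\mathcal{S}_p$ the metabolizer analysis is not the afterthought you make it: the paper's Lemma~\ref{lemma:constructing-characters} and the graph-metabolizer dichotomy of Proposition~\ref{prop:DirectSummetaboliser} are the technical heart of the argument, and require the further hypotheses that $q_\ell$ be prime and coprime to the remaining $q_i$. Your proposal gives no mechanism for handling arbitrary $\Z_p$-invariant metabolizers, which is precisely where the known techniques run out of steam and the conjecture remains open.
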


\noindent This question has been of particular interest due to its relevance to the slice-ribbon conjecture:
a result of Miyazaki shows that non-trivial linear combinations of iterated torus knots are not ribbon~\cite[Corollary~8.4]{Miyazaki}.
% every counterexample to Rudolph's conjecture is also a counterexample to the slice-ribbon conjecture.
In particular, if the slice-ribbon conjecture holds, then Rudolph's conjecture holds.
% that any non-trivial linear combination of algebraic knots is not ribbon by applying the Casson-Gordon ribbon criterion for fibered knots~\cite{CassonGordonFibered}.
Baker~\cite{Baker} and Abe-Tagami~\cite{AbeTagami} recently noticed that the slice-ribbon conjecture implies a statement stronger than Rudolph's conjecture:

\begin{conj}[Abe-Tagami~\cite{AbeTagami} and Baker~\cite{Baker}]\label{conj:main-conjecture}
  The set of prime fibered strongly quasi-positive knots is linearly independent in the smooth knot concordance group \(\mathcal{C}\).
\end{conj}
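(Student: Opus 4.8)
In full generality the conjecture is open; the realistic plan is to verify it for new infinite families --- concretely, to show that suitable large families of iterated torus knots (including many algebraic knots) are linearly independent in $\mathcal{C}$, which also advances Rudolph's Conjecture~\ref{conj:rudolph-conj}. Suppose for contradiction that $J=\#_{i=1}^{n}a_iK_i$ is slice, where $K_1,\dots,K_n$ are distinct knots from the family under consideration and not all $a_i$ vanish. First I would fix a prime $p$ and pass to the $p$-fold cyclic branched cover. Sliceness of $J$ forces the existence of a metabolizer for the classical $\mathbb{Z}[t^{\pm1}]$-Blanchfield pairing of $J$; after choosing a suitable prime-power character $\chi$ on $H_1(\Sigma_p(J);\mathbb{Z})$ vanishing on the associated metabolizer, the induced metabelian representation has the property that the \emph{twisted} Blanchfield pairing $\Bl^{\chi}(J)$ is metabolic over $\LC$. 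This is the Casson--Gordon sliceness obstruction in its reformulation in terms of twisted Blanchfield pairings, and it is the only consequence of sliceness that the argument would use.

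The computational core is a satellite (cabling) formula for twisted Blanchfield pairings, reducing $\Bl^{\chi}$ of an iterated torus knot to the twisted pairing of the torus knot used at the last cabling step together with the pairing of the companion; iterating, the building blocks become torus knots $T_{a,b}$. For these I would compute $\Bl^{\chi}(T_{a,b})$ explicitly --- say from a minimal Seifert surface via the Alexander-module presentation, or from a Seifert-form description of twisted pairings --- and record its primary decomposition over $\LC$. The crucial point is to isolate, for a well-chosen $p$ and $\chi$, an irreducible summand supported at a root of unity that does \emph{not} occur in the decompositions attached to $K_2,\dots,K_n$; arranging this forces constraints on the cabling parameters of the family, and it is the step that separates ``new families'' from the full conjecture.

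The endgame is then linking-form bookkeeping in the spirit of Casson--Gordon and Gilmer. The twisted Blanchfield pairing is additive under connected sum, so $\Bl^{\chi}(J)\cong\bigoplus_i a_i\Bl^{\chi}(K_i)$, with negative $a_i$ meaning $|a_i|$ copies of the inverse form. Over the PID $\LC$ a pairing splits orthogonally along its primary decomposition, a metabolic pairing has metabolic primary summands, and a pairing whose primary summands are carried by pairwise coprime polynomials is metabolic if and only if each summand is. Feeding in the previous computation, the primary summand of $\Bl^{\chi}(J)$ coming from the ``extremal'' torus-knot block is then forced to be metabolic on its own --- but it is not, which one checks by computing an associated signature invariant (a twisted analogue of Litherland's signature formula for iterated torus knots) and verifying it is nonzero. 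This contradiction yields the claimed linear independence.

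The step I expect to be hardest is the middle one, in two respects: proving a clean satellite formula for twisted Blanchfield pairings and performing the torus-knot computation precisely enough to control which irreducible factors appear; and, more seriously, the fact that the characters on the various $\Sigma_p(K_i)$ are restrictions of a single global $\chi$ on $\Sigma_p(J)$, so a counting argument is needed to guarantee that one $\chi$ can simultaneously satisfy the metabolizer hypothesis and make the extremal block non-metabolic. The precise form of that counting argument is exactly what dictates which families of iterated torus knots the method can handle.
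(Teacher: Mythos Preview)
Your outline is essentially the paper's approach: the Miller--Powell twisted-Blanchfield sliceness obstruction, the satellite/cabling formula from \cite{BorodzikConwayPolitarczyk}, explicit twisted Alexander polynomial computations for torus knots, and the primary-decomposition/distinct-roots argument (Proposition~\ref{prop:Splitting}) to isolate a non-metabolic summand. Two small corrections: the metaboliser on which the character must vanish is that of the $\Q/\Z$-linking form $\lambda_p(K)$ on $H_1(\Sigma_p(K))$, not of the classical $\Z[t^{\pm 1}]$-Blanchfield form (though the two are related via $H_1(\Sigma_p(K))\cong H_1(E_K;\Z[t^{\pm1}])/(t^p-1)$); and the paper computes the twisted polynomial via Fox calculus on the two-generator presentation of $\pi_1(E_{T(p,q)})$ rather than via a Seifert surface. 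The ``counting argument'' you anticipate is the paper's Lemma~\ref{lemma:constructing-characters}, whose key case analysis distinguishes \emph{graph} metabolisers (Proposition~\ref{prop:DirectSummetaboliser}) from non-graph ones --- for graph metabolisers one must first cancel $J\#-J$ summands (the ``simplified'' hypothesis), and this is indeed what dictates the precise conditions defining~$\mathcal{S}_p$.
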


% \noindent As before, the slice-ribbon conjecture implies Conjecture~\ref{conj:main-conjecture}, by~\cite{AbeTagami}.

This paper exhibits new large families of knots for which Conjectures~\ref{conj:rudolph-conj} and~\ref{conj:main-conjecture} hold.
%In particular, our results exhibit the largest known families of algebraic knots for which Rudolph's conjecture holds.

\subsection{Statement of the results}
Evidence of Rudolph's conjecture was first provided in 1979 by Litherland, who proved that positive torus knots are linearly independent in~$\mathcal{C}$~\cite{Litherland-signature}. 
In~2010, Hedden, Kirk and Livingston showed that for an appropriate choice of positive integers~$\{q_n\}_{n=1}^\infty$, the set~$\{T(2,q_n),T(2,3;2,q_n) \}_{n= 1}^\infty$ is linearly independent in~$\mathcal{C}$, where~$T(p,q)$ and~$T(p,q;r,s)$ denote the~$(p,q)$-torus knot and the~$(r,s)$-cable of~$T(p,q)$, respectively, and~$p$ is coprime to~$qrs$. 
It is known that an \emph{iterated torus knot}~$T(p_1,q_1;\ldots;p_k,q_k)$ is algebraic if and only if~$p_i,q_i>0$ and~$q_{i+1}>q_ip_{i+1}p_i$ for each~$i$.
Our main result, which relies on metabelian twisted Blanchfield pairings~\cite{MillerPowell, BorodzikConwayPolitarczyk}, reads as follows.

\begin{theorem}
  \label{thm:Main}
Fix a prime power~$p$. 
  Let \(\mathcal{S}_{p}\) be the set of iterated torus knots \(T(p,q_{1};p,q_{2};\ldots;p,q_{\ell})\), where the sequences~$(q_{1},q_{2},\ldots,q_{\ell})$ of positive integers that are coprime to~$p$ satisfy
  \begin{enumerate}
  \item $q_\ell$ is a prime;
    % \item for~$i=1,\ldots ,\ell$, the integer~$q_i$ is coprime to~$p$;
  \item for \(i=1,\ldots,\ell-1\), the integer \(q_{i} \) is coprime to \(q_{\ell}\) when~$\ell >1$;
  \end{enumerate}
  The set \(\mathcal{S}_{p}\) is linearly independent in the topological knot concordance group~$\mathcal{C}^{\text{top}}$.
\end{theorem}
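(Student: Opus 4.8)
The plan is to use metabelian twisted Blanchfield pairings as an obstruction to sliceness, following the strategy of Hedden–Kirk–Livingston and its refinement via the twisted Blanchfield pairing technology of Miller–Powell and Borodzik–Conway–Politarczyk. Suppose toward a contradiction that some non-trivial linear combination $K = \#_{j} n_j K_j$ (with $K_j \in \mathcal{S}_p$ distinct and $n_j \neq 0$) is topologically slice. Collecting signs, this means a connected sum of the form $\#_a K^{(a)} \# \#_b (-J^{(b)})$ is slice, where the $K^{(a)}$ and $J^{(b)}$ are iterated torus knots from $\mathcal{S}_p$. The first step is to single out, among all the iterated torus knots appearing, the one whose final cabling prime $q_\ell$ is largest; call it $q$. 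The idea is that a prime-power-order character coming from the $q$-fold (or $pq$-fold) branched cover will "see" only the summands whose last cabling parameter involves $q$, because of the coprimality hypotheses (1) and (2), and will be insensitive to all the others.

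The key steps, in order: (i) For a prime power $q^k$, consider the metabelian representations of $\pi_1$ of the zero-surgery obtained by composing the abelianization with a representation of the (metabelianized) fundamental group into a suitable finite group, evaluated on torsion in the Alexander module of prime-power order $q^k$. (ii) Compute the associated twisted Blanchfield pairing $\operatorname{Bl}^\alpha$ for iterated torus knots; here one uses the cabling formula for Blanchfield pairings and the fact that $\operatorname{Bl}^\alpha$ of $T(p,q_1;\ldots;p,q_\ell)$ decomposes according to the cabling structure. Because $q_\ell$ is prime and coprime to the other $q_i$, for a character of order a power of $q_\ell$ only the outermost cable contributes a nontrivial summand, which one identifies explicitly (it should look like the pairing of the relevant torus knot $T(p,q_\ell)$, up to something that vanishes or is controlled). (iii) Invoke the sliceness obstruction: if $K$ is slice then for every prime-power-order character extending over the slice disk exterior, $\operatorname{Bl}^\alpha(K)$ is metabolic; equivalently, it is hyperbolic in the Witt group of linking forms over $\mathbb{Q}(t)/\mathbb{Q}[t^{\pm1}]$ (or the appropriate localization). (iv) Run a counting/linking-form argument: show that one can choose a character, supported on the summand with maximal $q$, for which $\operatorname{Bl}^\alpha$ of that summand contributes a nonmetabolic piece that cannot be cancelled by the contributions of the other summands — these other summands contribute trivially (or hyperbolically) to this particular character because their Alexander modules have no $q$-primary part of the relevant type, by coprimality. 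This yields the contradiction.

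The main obstacle will be step (iv) — the bookkeeping that guarantees non-cancellation. One must carefully arrange the character so that: the maximal-$q$ summand (appearing with some nonzero multiplicity) produces a twisted Blanchfield pairing with a non-metabolic $q$-primary part of controlled size, while simultaneously all other summands (including other knots with the same $q_\ell = q$ but distinct lower parameters, and the knot with second-largest $q$, etc.) contribute nothing to the $q^k$-part. The coprimality assumptions (1) and (2) are exactly what make the Alexander modules "orthogonal" at the prime $q$: $q_\ell$ prime forces the relevant submodule to be a single cyclic $\mathbb{F}_q[t^{\pm1}]$-type piece, and coprimality of the $q_i$ with $q_\ell$ ensures the inner cabling parameters don't introduce spurious $q$-torsion. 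A secondary technical point is controlling which metabelian characters extend over a hypothetical topological slice disk exterior: one uses that, for prime-power order, Casson–Gordon-style / Miller–Powell arguments guarantee enough characters extend, via the order of $H_1$ of the branched cover and a counting argument over the metaboliser.

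Then, after extracting the contradiction, one concludes $\mathcal{S}_p$ is linearly independent in $\mathcal{C}^{\mathrm{top}}$, which also settles the smooth statement a fortiori, and in particular gives new families for Conjectures~\ref{conj:rudolph-conj} and~\ref{conj:main-conjecture} once one checks the algebraicity condition $q_{i+1} > q_i p^2$ can be met within $\mathcal{S}_p$.
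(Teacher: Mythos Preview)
Your overall framework is right---metabelian twisted Blanchfield pairings via Miller--Powell, applied to a hypothetical slice linear combination---but two of your key structural claims are wrong, and together they would make the argument collapse.

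First, the claim in step (ii) that ``for a character of order a power of \(q_\ell\) only the outermost cable contributes a nontrivial summand'' is false, and in fact the opposite is where the proof lives. The paper works with the \(p\)-fold branched cover (not a \(q\)- or \(pq\)-fold cover), and the character \(\chi\) of order \(r = q_\ell\) on \(H_1(\Sigma_p(K))\). Under the satellite formula the twisted Blanchfield pairing of \(T(p,Q)\) splits as \(\Bl_{\alpha(p,\chi)}(T(p,r)) \oplus \bigoplus_{u} \Bl(T(p,\widehat{Q}))(\xi_r^{a_u} t)\): the inner cables contribute the untwisted Blanchfield form of the companion, evaluated at \(\xi_r^{a_u} t\). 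These inner-cable terms (the paper's \(B_3^\chi\)) are precisely what distinguishes two knots in \(\mathcal{S}_p\) sharing the same final prime \(r\) but with different inner sequences. The outermost contributions \(\Bl_{\alpha(p,\chi)}(T(p,r))\) may well cancel in Witt; the obstruction is engineered to sit in the \(B_3^\chi\) piece, using that \(\Delta_{T(p,q)}(\xi_r^a t)\) and \(\Delta_{T(p,q)}(\xi_r^b t)\) have disjoint roots when \(a\neq b\) and \(\gcd(r,q)=1\) (this is where hypothesis (2) enters). Your ``pick the largest \(q_\ell\)'' idea therefore does not isolate a single summand: many distinct \(K_j\) can share that prime, and you must separate them via the inner terms you have discarded.

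Second, you treat the metaboliser condition as ``a secondary technical point,'' but it is the technical heart of the proof. Miller--Powell requires, for \emph{every} \(\Z_p\)-invariant metaboliser \(L\) of \(\lambda_p(K)\), a character vanishing on \(L\) for which the twisted Blanchfield form is non-metabolic. When several summands share \(q_\ell = r\), the relevant linking form is \(\lambda_p(T(p,r))^{m} \oplus -\lambda_p(T(p,r))^{m}\), and its metabolisers include graphs of arbitrary \(\Z_p\)-equivariant isometries of \(\Z_r^{(p-1)m}\). For such graph metabolisers a generic ``counting argument'' does not produce the needed character; one must show (and the paper does, in its Lemma on constructing characters) that unless \(K\) contains a cancelling \(J\#(-J)\) summand, for every metaboliser there exist \(q,s\) and a character \(\chi\) making the companion-level piece \(B_3^\chi(q,s)\oplus B_4(q,s)\) non-metabolic. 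This step uses the full combinatorics of the index sets \(\mathcal{I}_j(q,s)\) recording which summands have \(T(p,q)\) at depth \(s\), together with a graph/non-graph dichotomy for metabolisers. Without this analysis your step (iv) is a hope rather than an argument.
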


As an immediate corollary of Theorem~\ref{thm:Main}, we obtain the following.
% strong evidence for Conjecture~\ref{conj:rudolph-conj}.

\begin{corollary}
  For every prime power~$p$, the subset \(\mathcal{S}_{p}^{alg} \subset \mathcal{S}_{p}\) of algebraic knots in~$\mathcal{S}_{p}$ is linearly independent in~$\mathcal{C}^{\text{top}}$ and therefore satisfies Conjecture~\ref{conj:rudolph-conj}.
  % Let \(\mathcal{S}_{p}^{alg}\) denote the subset of \(\mathcal{S}_{p}\) consisting of algebraic knots, i.e.\ iterated torus knots \(T(p,q_{1};p,q_{2};\ldots;p,q_{\ell})\) satisfying \(q_{i+1} > p^{2} q_{i}\), for \(i=1,2,\ldots,\ell-1\).
  % The knots from \(\mathcal{S}_{p}^{alg}\) satisfy Conjecture~\ref{conj:rudolph-conj}.
\end{corollary}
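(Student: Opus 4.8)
The statement follows immediately from Theorem~\ref{thm:Main}. The plan is simply to observe that $\mathcal{S}_p^{alg}$ is, by definition, a subset of $\mathcal{S}_p$, and that linear independence in $\mathcal{C}^{\text{top}}$ passes to arbitrary subsets. Thus once Theorem~\ref{thm:Main} is established, linear independence of $\mathcal{S}_p^{alg}$ in $\mathcal{C}^{\text{top}}$ is automatic, and there is no further work to do for this half of the Corollary.

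For the second assertion, I would recall the classification of algebraic iterated torus knots quoted in the introduction: $T(p_1,q_1;\ldots;p_k,q_k)$ is algebraic precisely when all $p_i,q_i>0$ and $q_{i+1}>q_ip_{i+1}p_i$ for each $i$. In our situation all cabling parameters are equal to the fixed prime power $p$, so an element $T(p,q_1;\ldots;p,q_\ell)$ of $\mathcal{S}_p$ is algebraic exactly when $q_{i+1}>p^2 q_i$ for $i=1,\ldots,\ell-1$ (with no constraint when $\ell=1$, where every $T(p,q_1)$ with $q_1>0$ coprime to $p$ is a positive torus knot and hence algebraic). In particular $\mathcal{S}_p^{alg}$ is nonempty and, for each $\ell$, infinite: given any prime $q_\ell$ one can choose $q_1,\ldots,q_{\ell-1}$ recursively subject to the coprimality conditions (i) and (ii) of Theorem~\ref{thm:Main} and the growth condition $q_{i+1}>p^2q_i$, which is clearly compatible. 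Hence $\mathcal{S}_p^{alg}$ consists of algebraic knots and, being linearly independent in $\mathcal{C}^{\text{top}}$, in particular has the property that no non-trivial integral linear combination of its elements is (topologically, hence smoothly) slice; this is exactly the content of Conjecture~\ref{conj:rudolph-conj} restricted to $\mathcal{S}_p^{alg}$.

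The only point requiring a word of care — and the closest thing to an obstacle here — is the direction of implication between the concordance groups: Theorem~\ref{thm:Main} gives independence in $\mathcal{C}^{\text{top}}$, whereas Rudolph's conjecture is phrased in the smooth group $\mathcal{C}$. I would note that the natural surjection $\mathcal{C}\to\mathcal{C}^{\text{top}}$ (a smoothly slice knot is topologically slice) means that a set whose image is linearly independent in $\mathcal{C}^{\text{top}}$ is a fortiori linearly independent in $\mathcal{C}$, so no information is lost and the smooth statement follows. Assembling these remarks yields the Corollary with no computation beyond citing Theorem~\ref{thm:Main} and the algebraicity criterion.
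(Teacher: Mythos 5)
Your argument is correct and coincides with the paper's, which presents the corollary as an immediate consequence of Theorem~\ref{thm:Main}: linear independence passes to subsets, and independence in $\mathcal{C}^{\text{top}}$ implies independence in $\mathcal{C}$ via the surjection $\mathcal{C}\to\mathcal{C}^{\text{top}}$. The additional remarks on the algebraicity criterion and nonemptiness of $\mathcal{S}_p^{alg}$ are accurate but not needed for the deduction.
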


Since positively iterated torus knots are strongly quasi-positive (via~\cite[Theorem 1.2]{HeddenSomeRemarks} and~\cite[Proposition 2.1]{HeddenNotions}), Theorem~\ref{thm:Main} also gives infinite families of knots satisfying Conjecture~\ref{conj:main-conjecture}.
% Or \cite[Corollary 1.2]{HeddenNotions}

\begin{corollary}
  \label{cor:sqp}
  For every prime power \(p\), the set \(\mathcal{S}_{p}\) satisfies Conjecture~\ref{conj:main-conjecture}, and \(\mathcal{S}_{p} \setminus \mathcal{S}_{p}^{alg}\) is an infinite family of non-algebraic knots satisfying Conjecture~\ref{conj:main-conjecture}.
\end{corollary}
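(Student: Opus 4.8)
The plan is to derive the corollary from Theorem~\ref{thm:Main} together with classical structural properties of iterated torus knots and the observation about strong quasi-positivity already recorded above. First I would check that every \(K = T(p,q_1;p,q_2;\ldots;p,q_\ell)\in\mathcal{S}_p\) is a prime, fibered, strongly quasi-positive knot. Since \(p\) is a prime power, every cabling pair \((p,q_i)\) has positive entries, so \(K\) is a positively iterated torus knot and is therefore strongly quasi-positive by \cite[Theorem~1.2]{HeddenSomeRemarks} and \cite[Proposition~2.1]{HeddenNotions}. It is nontrivial because its last cabling is a \((p,q_\ell)\)-cable with \(p\ge 2\) and \(q_\ell\ge 2\) prime; and it is prime and fibered because nontrivial torus knots are prime and fibered, and because for \(p\ge 2\) the \((p,q)\)-cable of a nontrivial prime fibered knot is again nontrivial, prime and fibered, so induction on \(\ell\) applies (absorbing any steps with \(q_i=1\) into a shorter iterated presentation). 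Hence \(\mathcal{S}_p\) is contained in the set of prime fibered strongly quasi-positive knots.

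Next I would pass from topological to smooth concordance. Smooth concordance refines topological concordance, so the identity on knots induces a surjection \(\mathcal{C}\twoheadrightarrow\mathcal{C}^{\text{top}}\); consequently a \(\Z\)-linear combination of elements of \(\mathcal{S}_p\) that is smoothly slice is topologically slice, and Theorem~\ref{thm:Main} then forces all of its coefficients to vanish. Combined with the previous paragraph, this says that \(\mathcal{S}_p\) is a linearly independent subset of the prime fibered strongly quasi-positive knots, which is exactly what Conjecture~\ref{conj:main-conjecture} predicts for it; the same conclusion then holds automatically for any subset, in particular for \(\mathcal{S}_p\setminus\mathcal{S}_p^{alg}\).

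It remains to exhibit infinitely many non-algebraic knots in \(\mathcal{S}_p\). By the algebraicity criterion recalled in the introduction, an iterated torus knot \(T(p,q_1;\ldots;p,q_\ell)\) given in its genuine iterated form (all \(q_i\ge 2\)) is algebraic if and only if \(q_{i+1}>p^2q_i\) for every \(i\); so it suffices to produce infinitely many pairwise distinct \(K\in\mathcal{S}_p\) failing this. Take \(\ell=2\), and for each integer \(q_1\ge 2\) coprime to \(p\) use Bertrand's postulate to choose a prime \(q_2\) with \(q_1<q_2<2q_1\). Then \(q_2\) is prime, \(\gcd(q_1,q_2)=1\) (since the prime \(q_2\) exceeds \(q_1\)), and for all but finitely many \(q_1\) also \(\gcd(p,q_2)=1\), so \(K_{q_1}:=T(p,q_1;p,q_2)\in\mathcal{S}_p\); moreover \(q_2<2q_1\le p^2q_1\), so \(K_{q_1}\) is not algebraic. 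Letting \(q_1\) range over an infinite set of such values, the \(K_{q_1}\) are pairwise distinct (for instance their Seifert genera grow with \(q_1\)), which produces the desired infinite family of non-algebraic prime fibered strongly quasi-positive knots, linearly independent in \(\mathcal{C}\) by the above.

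Given Theorem~\ref{thm:Main}, I expect no real obstacle here: the argument is essentially bookkeeping. The two points that deserve a little care are that the cited results of Hedden apply to \emph{all} positively iterated torus knots, not only the algebraic ones, so that every element of \(\mathcal{S}_p\) (and not merely of \(\mathcal{S}_p^{alg}\)) is strongly quasi-positive; and the elementary number theory that guarantees infinitely many pairwise distinct non-algebraic members of \(\mathcal{S}_p\).
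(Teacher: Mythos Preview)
Your proposal is correct and follows the same approach the paper implies: the paper treats the corollary as immediate from Theorem~\ref{thm:Main} together with the sentence preceding it (that positively iterated torus knots are strongly quasi-positive), and you have simply filled in the details the paper leaves implicit---checking primeness and fiberedness, noting the surjection \(\mathcal{C}\to\mathcal{C}^{\text{top}}\), and explicitly exhibiting infinitely many non-algebraic members of \(\mathcal{S}_p\). One small remark: your ``absorbing \(q_i=1\)'' step is unnecessary, since for \(p\ge 2\) the \((p,q)\)-cable of any nontrivial fibered knot is already nontrivial, prime, and fibered regardless of \(q\), so the induction goes through directly once the first nontrivial stage is reached (and \(q_\ell\) prime guarantees there is one).
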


Abe and Tagami also conjecture that the set of L-space knots is linearly independent in~$\mathcal{C}$~\cite[Conjecture 3.4]{AbeTagami}.
For a knot~$K$ with Seifert genus~$g$, the~$(p,q)$-cable~$K_{p,q}$ is an L-space knot if and only if~$K$ is an L-space knot and~$(2g - 1)p \leq q$~\cite{HeddenOnKnotFloer,HomANoteOnCabling}.
Since torus knots are L-space knots, we also obtain the following result.
%evidence for Abe and Tagami's conjecture:
\begin{corollary}
  For every prime power \(p\), the subset \(\mathcal{S}_{p}^{L} \subset \mathcal{S}_{p}\) of L-space knots in~$\mathcal{S}_{p}$ is linearly independent in~$\mathcal{C}^{\text{top}}$, and this statement also holds for the infinite family~$\mathcal{S}_{p}^{L} \setminus \mathcal{S}_{p}^{\text{alg}}$ of non-algebraic L-space knots.
\end{corollary}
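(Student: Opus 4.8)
The plan is to obtain this as a direct consequence of Theorem~\ref{thm:Main} and the cabling criterion for L-space knots recalled in the introduction, the only real content being a short number-theoretic argument. Since a subset of a linearly independent set is linearly independent, the first assertion is immediate once we observe that $\mathcal{S}_p^L$ is nonempty: taking $\ell = 1$, the torus knot $T(p,q)$ is an L-space knot for every prime $q$ coprime to $p$, so all of these lie in $\mathcal{S}_p^L \subseteq \mathcal{S}_p$ and are linearly independent by Theorem~\ref{thm:Main}.

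For the second assertion I would exhibit an explicit infinite subfamily of $\mathcal{S}_p^L \setminus \mathcal{S}_p^{\mathrm{alg}}$ consisting of $2$-step cables. Write $K(q_1,q_2) := T(p,q_1;p,q_2)$, which is the $(p,q_2)$-cable of the torus knot $T(p,q_1)$. As $T(p,q_1)$ is an L-space knot of Seifert genus $g_1 = \tfrac{(p-1)(q_1-1)}{2}$, the cabling criterion shows that $K(q_1,q_2)$ is an L-space knot as soon as $q_2 \geq (2g_1-1)p = p^2q_1 - p^2 - pq_1$; on the other hand, the algebraicity criterion from the introduction shows that $K(q_1,q_2)$ is \emph{not} algebraic exactly when $q_2 \leq p^2 q_1$. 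So it suffices to find, for infinitely many integers $q_1$ coprime to $p$, a prime $q_2$ with
\[
  p^2 q_1 - p^2 - p q_1 \ \leq\ q_2 \ \leq\ p^2 q_1 .
\]
Whenever $q_1 > p^2$ such a $q_2$ exceeds $q_1$, hence is coprime to $q_1$, and exceeds $p$, hence is coprime to $p$; thus $K(q_1,q_2)$ satisfies conditions (1) and (2) of Theorem~\ref{thm:Main} and lies in $\mathcal{S}_p^L \setminus \mathcal{S}_p^{\mathrm{alg}}$.

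The remaining, and only substantive, step is the counting argument. The interval above has length $p^2 + pq_1$, hence contains $[x(1-1/p),\,x]$ with $x = p^2q_1$, and by the prime number theorem this interval contains a prime (indeed arbitrarily many) once $q_1$ is large enough. Running $q_1$ over a suitable infinite set of sufficiently large integers coprime to $p$, we obtain infinitely many pairwise distinct knots $K(q_1,q_2)$ --- distinctness being visible, for instance, from the Seifert genus $p\,g_1 + \tfrac{(p-1)(q_2-1)}{2}$, which can be arranged to be strictly increasing --- each an L-space knot and none algebraic. Feeding this family into Theorem~\ref{thm:Main} completes the proof. I expect no concordance-theoretic difficulty here: the whole argument is bookkeeping on top of Theorem~\ref{thm:Main}, with the existence of primes in a short interval being the one point requiring (mild) care.
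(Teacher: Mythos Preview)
Your proposal is correct and follows the same approach the paper intends: the corollary is stated without proof, as an immediate consequence of Theorem~\ref{thm:Main} together with the L-space cabling criterion and the algebraicity criterion recalled in the introduction. You have simply supplied the details the paper leaves implicit, in particular the elementary argument (via the prime number theorem in short intervals) that the gap between the L-space threshold $(2g_1-1)p = p^2q_1 - p^2 - pq_1$ and the algebraicity threshold $p^2q_1$ contains a prime for infinitely many admissible $q_1$, so that $\mathcal{S}_p^L \setminus \mathcal{S}_p^{\mathrm{alg}}$ is indeed infinite.
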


Note however that not all our examples are L-spaces knots: since the cable of an iterated torus knot need not be an L-space knot, Corollary~\ref{cor:sqp} shows that the infinite set~$\mathcal{S}_p \setminus \mathcal{S}_p^L$ contains no L-spaces knots but is nevertheless linearly independent in~$\mathcal{C}^{\text{top}}$.

\subsection{Context and comparison with smooth techniques}

Litherland used the Levine-Tristram signature to show that torus knots are linearly independent in~$\mathcal{C}$~\cite{Litherland-signature}.
This approach is insufficient to answer Rudolph's conjecture since Livingston and Melvin showed in~\cite{LivingstonMelvinAlgebraicKnots} that the following linear combinations of iterated torus knots are algebraically slice:
\begin{equation}
  \label{eq:HKLIntro}J(p,q,q_1,q_2):= T(p,q;p,q_1) \# -T(p,q_1)\# -T(p,q;p,q_2)\# T(p,q_2).
\end{equation}
Classical knot invariants can thus not obstruct~$J(p,q,q_1,q_2)$ from being slice.

Hedden, Kirk, and Livingston managed to leverage the Casson-Gordon invariants to provide further evidence of Rudolph's conjecture~\cite{HeddenKirkLivingston}.
Indeed, they showed that for an appropriate choice of~$\{q_n\}_{n=1}^\infty$, the knots~$\{J(2,3,q_{2n-1},q_{2n})\}_{n= 1}^\infty$ generate an infinite rank subgroup in~$\mathcal{C}$. 
This result is particularly notable since they observe that the~$s$-invariant from Khovanov homology and the~$\tau$-invariant from Heegaard-Floer homology both vanish on~$J(2,3,q_{2n-1},q_{2n})$~\cite[Proposition~8.2]{HeddenKirkLivingston}.
% As each~$J(p,q,q_{2n-1},q_{2n})$ was already known to be non-ribbon, Hedden, Kirk, and Livingston's result ruled out potential counterexamples to the slice-ribbon conjecture.
In fact, their argument (combined with Proposition~\ref{prop:algebraic-sliceness}) generalises to show that if \(K\) is a linear combination of algebraically slice knots belonging to \(\mathcal{S}_{p}\), then~$\tau(K) = 0$ and $s(K)=0$.

Next, we observe that the Upsilon invariant~$\Upsilon_K \colon [0,2] \to \R$ from knot Floer homology~\cite{OSS} is also insufficient to prove Theorem~\ref{thm:Main}.
First note that if~$q_1,q_2>p(p-1)(q-1)$, then~$T(p,q;p,q_i)$ is an L-space knot~\cite{HeddenOnKnotFloer}, and thus a result of Tange shows that~$\Upsilon_{T(p,q;p,q_i})(t)=\Upsilon_{T(p,q)}(pt)+\Upsilon_{T(p,q_i)}(t)$ for all~$t\in[0,2]$~\cite[Theorem 3]{Tange}.
The additivity of~$\Upsilon$ then establishes that~$\Upsilon_{J(p,q,q_1,q_2)}(t)=~0$ for all~$t \in [0,2]$ whenever~$q_1,q_2>p(p-1)(q-1)$.

\subsection{Strategy and ingredients of the proof}

                                                    %                                                     Similarly, to~\cite{HeddenKirkLivingston}, we use the Casson-Gordon set-up to prove Theorem~\ref{thm:Main}.
%The proof of Theorem~\ref{thm:Main} has two major inputs.
%The first is from Casson-Gordon theory~\cite{CassonGordon1,CassonGordon2}.
The proof of Theorem~\ref{thm:Main} relies on Casson-Gordon theory~\cite{CassonGordon1,CassonGordon2,KirkLivingston}, and more specifically on the metabelian Blanchfield pairings introduced by Miller-Powell~\cite{MillerPowell} and further developed by the first author, the third author, and Maciej Borodzik~\cite{BorodzikConwayPolitarczyk}.
Since these invariants are somewhat technical, the next paragraphs describe some background and ideas that go into the proof of Theorem~\ref{thm:Main}.
For notational simplicity, however, we restrict ourselves to a very particular case: we apply our strategy to the knot~$J(p,q,q_1,q_2)$ described in~\eqref{eq:HKLIntro}.

                                                    %                                                     In~\cite{HeddenKirkLivingston}, the authors use Casson-Gordon signatures in order to prove their, 
                                                    %                                                     One advantage of these Blanchfield pairings is that their Witt class, which belongs to the Witt group~$W(\C(t),\LC)$ of linking forms, encapsulate the signature jumps of the Casson-Gordon Witt class~\cite[Theorem~8.14]{BorodzikConwayPolitarczyk}.
                                                    %                                                     This streamlines several of the arguments in~\cite{HeddenKirkLivingston}.

\subsubsection*{The sliceness obstruction}

Let~$p$ be a prime power, let~$\Sigma_p(J)$ be the~$p$-fold branched cover of the knot~$J:=J(p,q,q_1,q_2)$, let~$\chi$ be a character on~$H_1(\Sigma_p(J))$, and let~$M_J$ be the~$0$-framed surgery of~$J$.
Associated to this data, there is a non-singular sesquilinear and Hermitian \emph{metabelian Blanchfield pairing} 
$$\operatorname{Bl}_{\alpha(p,\chi)}(J) \colon H_1(M_J;\LC^p)\times  H_1(M_J;\LC^p) \to \C(t)/\LC.$$
Here~$H_1(M_J;\LC^p)$ denotes the homology of~$M_J$ twisted by a metabelian representation \linebreak~$\alpha(p,\chi) \colon \pi_1(M_J) \to GL_p(\LC)$ whose definition will be recalled in Section~\ref{sec:TwistedPolynomial}.
The precise definition of~$\operatorname{Bl}_{\alpha(p,\chi)}(J)$ is irrelevant in this paper: only its properties are required.
Informally, however, the pairing~$\operatorname{Bl}_{\alpha(p,\chi)}(J)~$ contains both the information from twisted polynomial invariants and twisted signature invariants.
We now describe how~$\operatorname{Bl}_{\alpha(p,\chi)}(J)~$ provides a sliceness obstruction.

Let~$\lambda_p(J)$ denote the~$\Q/\Z$-valued linking form on~$H_1(\Sigma_p(J))$.
Miller and Powell show that if for every~$\Z_p$-invariant metaboliser~$G$ of~$\lambda_p(J)$, there exists a prime power order character~$\chi$ that vanishes on~$G$ and such that~$\Bl_{\alpha(p,\chi)}(J)$ is not metabolic, then~$J$ is not slice~\cite[Theorem~6.10]{MillerPowell}.
In order to make this obstruction more concrete, we now recall some terminology on linking forms and their metabolizers.

\subsubsection*{The Witt group of linking forms}

We focus on linking forms over~$\LC$, referring to Section~\ref{sec:Metabolisers} for a discussion over more general rings.
A \emph{linking form} over~$\LC$ is a sesquilinear Hermitian pairing~$V \times V \to \C(t)/\LC$, where~$V$ is a torsion~$\LC$-module.
A linking form~$(V,\lambda)$ is \emph{metabolic} if there is a submodule~$L \subset V$ such that~$L=L^\perp$; such an~$L$ is called a \emph{metaboliser}.
The \emph{Witt group of linking forms}, denoted~$W(\C(t),\LC)$, consists of the monoid of non-singular linking forms modulo the submonoid of metabolic linking forms.
We write~$\lambda_1 \sim \lambda_2$ if two linking forms agree in~$W(\C(t),\LC)$.
The Miller-Powell obstruction to sliceness, therefore, consists of deciding whether a certain twisted Blanchfield pairing~$\Bl_{\alpha(p,\chi)}(J)$ is zero in the group~$W(\C(t),\LC)$.
As we will now describe, one of our main ideas is to transfer a problem of linear independence in~$\mathcal{C}^{\text{top}}$ (namely Rudolph's conjecture) into a problem of linear independence in~$W(\C(t),\LC)$.

\subsubsection*{From linear independence in~$\mathcal{C}^{\text{top}}$ to linear independence in~$W(\C(t),\LC)$}

Since the \linebreak knot~$J= T(p,q;p,q_1) \# -T(p,q_1)\# -T(p,q;p,q_2)\# T(p,q_2)$ is a connected sum of four knots, both~$H_1(\Sigma_p(J))$ and~$\lambda_p(J)$ can be decomposed into four direct summands:
                                                    %                                                     In fact, we will observe in Section~\ref{sec:MainTheorem} that 
                                                    %                                                     ~$$ H_1(\Sigma_p(J))=H_1(\Sigma_p(T_{p,q_1})) \oplus H_1(\Sigma_p(T_{p,q_2})) \oplus H_1(\Sigma_p(T_{p,q_2})) \oplus H_1(\Sigma_p(T_{p,q_1})),$$
                                                    %                                                     and 
$$\lambda_p(J)=\lambda_p(T(p,q_1))\oplus -\lambda_p(T(p,q_1))\oplus \lambda_p(T(p,q_2))\oplus -\lambda_p(T(p,q_2)).$$
In particular, any character on~$H_1(\Sigma_p(J))$ can be written as~$\chi=\chi_1\oplus\chi_2\oplus\chi_3\oplus\chi_4$. 
For each given~$\Z_p$-invariant metaboliser~$M$ of~$\lambda_p(J)$, the ``sliceness-obstructing character" that we will produce will be of the form~$\chi=\chi_1 \oplus \chi_2 \oplus \theta \oplus \theta$ where~$\theta$ denotes the trivial character.
Using the definition of~$J$, together with the direct sum decomposition of \cite[Corollary~8.21]{BorodzikConwayPolitarczyk}, the Witt class of the metabelian Blanchfield pairing of~$J$ is given by
\begin{align}\label{equation:Bl(J)}
  \operatorname{Bl}_{\alpha(p,\chi)}(J) \sim \operatorname{Bl}_{\alpha(p,\chi_1)}(T(p,q;p,q_1))
  &\oplus -\operatorname{Bl}_{\alpha(p,\chi_2)}(T(p,q_1))  \\
  &\oplus - \operatorname{Bl}_{\alpha(p,\theta)}(T(p,q;p,q_2))\oplus \operatorname{Bl}_{\alpha(p,\theta)}(T(p,q_2)). \nonumber
\end{align}
This expression can be further decomposed by applying the satellite formula for the metabelian Blanchfield forms given in \cite[Theorem~8.19]{BorodzikConwayPolitarczyk}.
Regardless of the final expression, the problem has been converted into a question of linear independence in~$W(\C(t),\LC)$.
In Proposition~\ref{prop:Splitting}, we describe a criterion for linear independence in terms of roots of the orders of the underlying modules (recall that the order of a module is a Laurent polynomial in~$\LC$; it is defined up to multiplication by units of~$\LC$).
Here is a simplified version of this statement.
\begin{proposition}
  \label{prop:SplittingIntro}
  If~$(V_1,\lambda_1)$ and~$(V_2,\lambda_2)$ are two non-metabolic linking forms over~$\LC$ such that~$\operatorname{Ord}(V_1)$ and~$\operatorname{Ord}(V_2)$ have distinct roots, then the Witt classes~$[V_1,\lambda_1]$ and~$[V_2,\lambda_2]$ are linearly independent in \(W(\C(t),\C[t^{\pm 1}])\).
\end{proposition}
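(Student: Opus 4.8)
The plan is to reduce linear independence in \(W(\C(t),\LC)\) to a statement about the supports of torsion \(\LC\)-modules, using the primary (d\'evissage) decomposition of the Witt group of linking forms. Since \(\LC=\C[t^{\pm 1}]\) is a PID equipped with the involution \(t\mapsto t^{-1}\), every torsion linking form \((V,\lambda)\) splits canonically as an orthogonal sum \(\bigoplus_{\mathfrak{p}}(V_{\mathfrak{p}},\lambda_{\mathfrak{p}})\) over the finitely many primes \(\mathfrak{p}\) in the support of \(V\), where \(V_{\mathfrak{p}}\) is the \(\mathfrak{p}\)-primary part (conjugate primes \(\mathfrak{p}\) and \(\bar{\mathfrak{p}}\) being bundled together, which is automatic since the order of a Hermitian linking form is symmetric). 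This yields a decomposition \(W(\C(t),\LC)\cong\bigoplus_{\mathfrak{p}}W_{\mathfrak{p}}\) and, for each set \(S\) of primes, a projection homomorphism \(\pi_S\colon W(\C(t),\LC)\to\bigoplus_{\mathfrak{p}\in S}W_{\mathfrak{p}}\) sending \([V,\lambda]\mapsto[\bigoplus_{\mathfrak{p}\in S}V_{\mathfrak{p}},\lambda_{\mathfrak{p}}]\). The elementary fact I would record alongside this is that the support of \(V\), namely the set of \(\mathfrak{p}\) with \(V_{\mathfrak{p}}\neq 0\), is precisely the set of roots of \(\operatorname{Ord}(V)\); this is immediate from the structure theorem for torsion modules over the PID \(\LC\).

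Granting this, the proof is short. The hypothesis that \(\operatorname{Ord}(V_1)\) and \(\operatorname{Ord}(V_2)\) have distinct roots says exactly that the supports \(S_1\) and \(S_2\) of \(V_1\) and \(V_2\) are disjoint. Suppose \(a_1[V_1,\lambda_1]+a_2[V_2,\lambda_2]=0\) in \(W(\C(t),\LC)\) for integers \(a_1,a_2\). Applying \(\pi_{S_1}\), and using that \((V_2)_{\mathfrak{p}}=0\) for \(\mathfrak{p}\in S_1\) while \(\pi_{S_1}\) fixes \([V_1,\lambda_1]\) (as \(V_1\) is entirely supported on \(S_1\)), we get \(a_1[V_1,\lambda_1]=0\); symmetrically \(a_2[V_2,\lambda_2]=0\). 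Since \((V_i,\lambda_i)\) is non-metabolic we have \([V_i,\lambda_i]\neq 0\), so it suffices to know that \(W(\C(t),\LC)\) is torsion-free, which then forces \(a_i=0\). Torsion-freeness comes from the local picture: the prime \((t-\xi)\) is fixed by the involution iff \(\xi\) lies on the unit circle, in which case \(W_{(t-\xi)}\cong\Z\) (detected by a signature), while primes off the circle occur in conjugate pairs and contribute \(W_{\mathfrak{p}}=0\); hence \(W(\C(t),\LC)\cong\bigoplus_{|\xi|=1}\Z\).

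The genuine content, and the step I expect to be the main obstacle, is the d\'evissage isomorphism \(W(\C(t),\LC)\cong\bigoplus_{\mathfrak{p}}W_{\mathfrak{p}}\) together with the identification of the local summands: one must check that the projections \(\pi_S\) are well defined on Witt classes (a Witt-trivial class need not have empty support, but a metabolic form restricts to a metabolic form on each primary part), that \(W_{\mathfrak{p}}\) is \(0\) or \(\Z\) according to whether \(\mathfrak{p}\) is fixed by the involution, and that ``metabolic'' and ``stably metabolic'' agree for nonsingular linking forms over \(\LC\) so that non-metabolicity really does detect a nonzero Witt class. All of this is precisely the material on linking forms set up in Section~\ref{sec:Metabolisers}, so in the write-up I would establish the decomposition and the projections there and then deduce Proposition~\ref{prop:SplittingIntro} — and its unsimplified form Proposition~\ref{prop:Splitting}, which is the same argument carried out summand by summand — in a few lines as above, the only remaining point being the routine verification that conjugate primes are bundled correctly, which is automatic from the symmetry of orders of Hermitian linking forms.
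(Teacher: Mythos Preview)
Your argument is correct. It is essentially the same as the paper's, but presented through the algebraic d\'evissage decomposition rather than through signature jumps. The paper (Remark~\ref{rem:JumpsRoots} and Proposition~\ref{prop:Splitting}) argues directly with the signature-jump homomorphisms \(\delta\sigma_{(V,\lambda)}(\omega)\): since these detect \(W(\C(t),\LC)\) completely, a non-metabolic \(\lambda_1\) has a nonzero jump at some \(\omega_1\), while \(\lambda_2\) has no jump there because \(\omega_1\) is not a root of \(\operatorname{Ord}(V_2)\); hence \(n_1=0\), and symmetrically \(n_2=0\). Your projections \(\pi_{S}\) onto primary summands are exactly these jump maps in disguise---the isomorphism \(W_{(t-\xi)}\cong\Z\) for \(|\xi|=1\) \emph{is} the signature jump at \(\xi\)---so the two proofs differ only in packaging. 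Your formulation has the advantage of making the torsion-freeness and the role of the support explicit and of generalising cleanly to other PIDs with involution; the paper's formulation has the advantage of citing a single concrete invariant already set up in~\cite{BorodzikConwayPolitarczyk} without needing to verify the well-definedness of the primary projections on Witt classes.
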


\subsubsection{Computation of twisted Alexander polynomials}
In order to apply Proposition~\ref{prop:SplittingIntro}, we must, therefore, understand the roots of the metabelian twisted Alexander polynomials of~$T(p,q)$ associated to characters on~$H_1(\Sigma_p(T(p,q)))$.
This is carried out in Section~\ref{sec:TwistedPolynomial} and relies on our explicit understanding of the~$p$-fold cover~$E_p(T(p,q)) \to E(T(p,q))$ from Section~\ref{sec:branch-covers-torus}.
Since this computation of twisted polynomials might be of independent interest, we summarize it as follows.

\begin{proposition}[Lemma~\ref{lem:Characaters} and Corollary~\ref{cor:TwistedAlexanderPolynomial}]
  \label{prop:TwistedPolyIntro}
  Let~$p,q>0$ be two coprime integers, and set~$\xi_p=e^{2\pi i/p}$.
  The abelian group of characters on \(H_1(\Sigma_{p}(T(p,q))) \cong \Z_q^{p-1}\) is isomorphic~to
  \[\{ \mathbf{a}:=(a_{1},\ldots,a_{p}) \in \Z_{q}^{p} \ | \ a_1+\cdots+a_p=0\}.\]
  We write~$\chi_{\mathbf{a}}$ for the character associated to~$\mathbf{a}$.
  The metabelian twisted Alexander polynomial of the~$0$-framed surgery~$M_{T(p,q)}$ associated to the character~$\chi_{\mathbf{a}}\colon H_1(\Sigma_{p}(T(p,q))) \to \Z_q$ is given by
  ~$$\Delta_{1}^{\alpha(p,\chi_{\mathbf{a}})}(M_{T(p,q)})= \frac{(-1)^{p-1}(1-t^{q})^{p-1}}{(t\xi_q^{a_{1}}-1)(t\xi_q^{a_{2}}-1) \cdots (t\xi_q^{a_{p}}-1)(t-1)}.~$$  
\end{proposition}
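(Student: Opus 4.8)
The plan is to compute the twisted chain complex of the $0$-framed surgery $M_{T(p,q)}$ (equivalently of the knot exterior, up to the standard correction term coming from the meridian) with respect to the metabelian representation $\alpha(p,\chi_{\mathbf a})$, using a Wirtinger-type presentation or a CW structure adapted to the explicit $p$-fold cover $E_p(T(p,q))\to E(T(p,q))$ constructed in Section~\ref{sec:branch-covers-torus}. First I would pin down the identification of the character group: since $T(p,q)$ is fibered with monodromy a periodic diffeomorphism, $H_1(\Sigma_p(T(p,q)))$ carries a $\Z_p$-action, and one knows classically that as a $\Z[\Z_p]$-module it is $\Z_q[\Z_p]/(1+\sigma+\cdots+\sigma^{p-1})$, i.e.\ $\Z_q^{p-1}$; dualizing and choosing coordinates $a_1,\dots,a_p$ on the regular representation identifies $\operatorname{Hom}(H_1(\Sigma_p);\Z_q)$ with the subgroup $\{\mathbf a:\sum a_i=0\}\subset\Z_q^p$, the constraint being exactly the vanishing on the norm element. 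This is Lemma~\ref{lem:Characaters}, and I would derive it from the description of the branched cover in Section~\ref{sec:branch-covers-torus} rather than citing monodromy folklore.

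The core computation is Corollary~\ref{cor:TwistedAlexanderPolynomial}. I would exploit that $T(p,q)$ has a two-generator, one-relator knot group $\langle x,y \mid x^p = y^q\rangle$ (equivalently a genus-one-handle CW structure with one $0$-cell, two $1$-cells, one $2$-cell), so the twisted complex is $0\to \LC^p \xrightarrow{\partial_2} (\LC^p)^2 \xrightarrow{\partial_1} \LC^p \to 0$, with $\partial_1$ and $\partial_2$ given by $\alpha(p,\chi_{\mathbf a})$ applied to the Fox derivatives of the relator $r=x^p y^{-q}$. The twisted Alexander polynomial is then the ratio $\det(\partial_2\text{ with a column removed})/\det(\alpha(p,\chi_{\mathbf a})(x_j)-I)$ for a suitable generator $x_j$; the denominator $(t-1)$ in the stated formula is the contribution of a meridian generator to this correction term, and the $0$-surgery versus exterior bookkeeping accounts for one further factor. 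The key point is that under $\alpha(p,\chi_{\mathbf a})$ the elements $x,y$ (whose images in $\Z_p$ generate) act on $\LC^p$ by the tensor product of a cyclic permutation matrix (coming from the $\Z_p$-cover) with a diagonal twist by powers of $t$ and by the Reidemeister-type scalars $\xi_q^{a_i}$ coming from the character; concretely $\alpha(\text{meridian})$ is conjugate to $t\cdot(\text{shift})$ and the relevant Fox-derivative matrix becomes, after diagonalizing the $\Z_p$-action, a block-diagonal matrix whose blocks are $\frac{1-t^q}{t\xi_q^{a_i}-1}$-type scalars times $p$-th roots of unity $\xi_p$. Multiplying the blocks gives the product $\prod_{i=1}^p(t\xi_q^{a_i}-1)$ in the denominator and $(1-t^q)^{p-1}$ in the numerator, with the sign $(-1)^{p-1}$ tracked through the Fox calculus.

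The main obstacle I expect is bookkeeping rather than conceptual: correctly matching the \emph{metabelian} representation $\alpha(p,\chi)$ of Section~\ref{sec:TwistedPolynomial} — which is induced up from $\pi_1$ of the cover and involves both the deck action and the character — with an honest matrix-valued Wirtinger representation in coordinates for which the Fox derivatives can be evaluated, and then simultaneously diagonalizing the commuting $\Z_p$-pieces without losing track of how $t$ interacts with the shift (so that $\xi_p$ and $\xi_q$ enter in the right variables). A secondary subtlety is justifying that the quotient of determinants is genuinely a Laurent polynomial (a priori one only gets an element of the quotient field), which follows once one knows the twisted Alexander module is torsion — guaranteed here because the character has prime-power order so the twisted homology is finite-dimensional over $\C$ away from finitely many $t$; alternatively one checks directly that $(t-1)\prod_i(t\xi_q^{a_i}-1)$ divides $(-1)^{p-1}(1-t^q)^{p-1}\det(\cdots)$ by a root-counting argument using $\sum a_i=0$. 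Once the representation is unwound, the determinant itself factors cleanly and the formula drops out.
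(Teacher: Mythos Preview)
Your approach is essentially the paper's: Fox calculus on the two--generator one--relator presentation $\langle c_1,c_2\mid c_1^p=c_2^q\rangle$, followed by diagonalisation of the representation, with the passage from the exterior to $M_{T(p,q)}$ contributing the extra $(-1)^{p-1}(t-1)$ via $\det(\alpha(p,\chi)(\mu)-\id)$. The character description also matches Lemma~\ref{lem:Characaters}.

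One point where your sketch is slightly garbled and the paper is sharper: the factors $t\xi_q^{a_i}-1$ do \emph{not} appear as blocks of the Fox--derivative matrix; they come entirely from the denominator $\det(\alpha(p,\chi)(c_2)-\id)$. The paper's clean mechanism for separating the two determinants is Proposition~\ref{prop:alphac1c2}: $\alpha(p,\chi)$ is conjugate to a representation $\alpha'$ in which $c_2$ acts by the \emph{pure diagonal} $t\cdot\diag(\xi_q^{a_1},\dots,\xi_q^{a_p})$ while $c_1$ acts by the \emph{pure shift--power} $A_p(t)^q$ with no diagonal twist at all. With this in hand the denominator is immediate, and for the numerator one passes to $\C[t^{\pm 1/p}]$ where $A_p(t)$ diagonalises to $\diag(\xi_p^j t^{1/p})_j$; the Fox derivative $1+c_1+\cdots+c_1^{p-1}$ then has diagonal entries $\frac{1-t^q}{1-\xi_p^{jq}t^{q/p}}$, whose product over $j$ collapses to $(1-t^q)^{p-1}$. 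So the $\xi_p$'s live in the numerator computation and the $\xi_q^{a_i}$'s in the denominator, not mixed together as your ``blocks $\frac{1-t^q}{t\xi_q^{a_i}-1}$ times $\xi_p$'' suggests. Once you keep these separated via Proposition~\ref{prop:alphac1c2}, the bookkeeping obstacle you anticipated largely disappears.
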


\subsubsection*{Main steps of the proof}

We now return to the knot $J=J(p,q,q_1,q_2)$ from~\eqref{eq:HKLIntro}.
Obstructing $J$ from being slice has three main steps.
In fact, the proof of Theorem~\ref{thm:Main} in Section~\ref{sec:MainTheorem} follows more complicated versions of these same steps.
\begin{enumerate}
\item Firstly, we use the previously described ingredients to study the implications of $\Bl_{\alpha(p,\chi)}(J)$ being metabolic on the characters $\chi_1$ and $\chi_2$; here $\chi=\chi_1 \oplus \chi_2 \oplus \theta \oplus \theta$ with $\theta$ the trivial character. This is the content of Subsection~\ref{subsub:Blanchfield}.
\item Secondly, we show that for every metaboliser $L$ of~$\lambda_p(T_{p,q_1}) \oplus -\lambda_p(T_{p,q_1})$, it is possible to build characters $\chi_1$ and $\chi_2$ that violate these conditions and such that $\chi_1 \oplus \chi_2$ vanishes on $L$. This is the content of Subsection~\ref{subsub:BuildingCharacters}.
\item Finally, we combine these two steps to obstruct the sliceness of $J$: for every metaboliser~$G$ of~$\lambda_p(J)$, we are able to build a character $\chi=\chi_1 \oplus \chi_2 \oplus \theta \oplus \theta$ that vanishes on~$G$ and such that $\Bl_{\alpha(p,\chi)}(K)$ is not metabolic.
  This is the content of Subsection~\ref{subsub:Conclusion}.
\end{enumerate}

\begin{remark}
  When~$p=2$, Hedden, Kirk and Livingston also use an obstruction based on the Casson-Gordon set-up to show that for an appropriate choice of positive integers~$\{q_n\}_{n=1}^\infty$, the set~$\{T(2,q_n),T(2,3;2,q_n) \}_{n= 1}^\infty$ is linearly independent in~$\mathcal{C}^{\text{top}}$~\cite{HeddenKirkLivingston}.
  Our work differs from theirs in two main points:
  \begin{itemize}
  \item While~\cite{HeddenKirkLivingston} uses a blend of discriminants and signatures to prove its linear independence result,  we use metabelian Blanchfield pairings.
    In a nutshell, the Blanchfield pairing encapsulates both the discriminant and (most of) the signature invariants allowing us to both streamline and generalize several of the argument from~\cite{HeddenKirkLivingston}.
  \item The result of~\cite{HeddenKirkLivingston} is proved without having to study invariant metabolizers; see also~\cite[Section 9]{BorodzikConwayPolitarczyk}.
    This is a feature of iterated torus knots~$T(p,Q)$ with~$p=2$ and fails when~$p>2$.
    %     Our methods are, therefore, far from mere adaptions of those of~\cite{HeddenKirkLivingston}.
  \end{itemize}
%  Finally, our work is also one of the rare instances, where
%  Casson-Gordon invariants have been successfully employed
    %     for~$p$-fold covers with~$p>2$ an arbitrary primepower.

 Passing from our outline to obstruct the sliceness of~\(J(p,q,q_{1},q_{2})\) to the proof of Theorem~\ref{thm:Main} requires
  additional steps.
As often in Casson-Gordon theory, the main technical difficulty to overcome concerns the metabolizers of the linking form of the knot in question.
  Regarding these metabolizers, our strategy can be summarized as follows:
  \begin{enumerate}
  \item Given a metabolizer, we isolate certain technical conditions which guarantee that a character violates the sliceness obstruction.
    This is the content of Lemma~\ref{lemma:constructing-characters}.
  \item We distinguish a certain family of metabolizers called
    \emph{graph metabolizers}, see~Section~\ref{sub:Graph}.
  \item The construction of the required character, for any fixed
    non-graph metabolizer is not overly challenging, see Cases~1 and~2 in the proof of
    Lemma~\ref{lemma:constructing-characters}.
  \item Dealing with graph metabolizers requires more work. 
  In Case 3, we show that either there exists a character satisfying the conditions
    from Lemma~\ref{lemma:constructing-characters}, or the knot in
    question contains a slice summand~\(K \# -K\), for some knot \(K\).
    Consequently, once we cancel all the summands of the form \(K \#
    -K\), we are able to construct the desired obstructing character for any graph metabolizer, and finish the proof.
  \end{enumerate}
\end{remark}

\subsection{Assumptions and outlook}
We conclude this introduction by commenting on the various technical assumptions that appear in Theorem~\ref{thm:Main}.
\begin{enumerate}
\item The assumption that the integers~$q_i$ are coprime to~$q_\ell$ is used in Proposition~\ref{prop:NotMetabolic} to ensure that certain Witt classes are linearly independent in~$W(\C(t),\C[t^{\pm 1}])$.
  This hypothesis has its roots in the notion of \emph{$p$-independence} introduced in~\cite[Definition 6.2]{HeddenKirkLivingston}.
\item We assume that~$p$ is a prime power in order to use Casson-Gordon theory~\cite{CassonGordon1,CassonGordon2}.
    %     We used that the~$q_i$ are all prime in Proposition~\ref{prop:algebraic-sliceness}.
\item We required that the~$q_i$ be positive mostly because of our interest in Rudolph's conjecture: algebraic knots are iterated torus knots with \emph{positive} cabling parameters.
    %     Some changes in Section 5 should allow us to get rid of it.
\item We use that the~$q_{i,\ell_i}$ are prime in order to obtain the decomposition in~\eqref{eq:DecompoBranchedCover} and to ensure that~$\F_{q_{i,\ell_i}}$ is a field. 
%\color{red}
%With some additional work, one could weaken this assumption to require that~$q_{i,\ell_i}$ be a prime power, but we will not carry this out here.
%\item Further results can be obtained without fixing the cabling parameter~$p$, but we will not pursue this in the current paper.  \smargin{AC: We should perhaps remove red?}
%\color{black}
\end{enumerate}

Summarising, our assumptions are made for technical reasons: we have so far not encountered linear combinations of (algebraically slice) iterated torus knots whose sliceness is not obstructed by some Casson-Gordon invariants. 
Furthermore, this paper does not fully use the techniques developed in~\cite{BorodzikConwayPolitarczyk} to compute the Casson-Gordon Witt class.
Therefore, it would be interesting to study how far these methods can be pushed to investigate Rudolph's conjecture.
    %     , especially when the parameter~$p$ is not fixed.

\subsection{Organisation}
This paper is organized as follows.
In Section~\ref{sec:branch-covers-torus}, we collect several results on the algebraic topology of the exterior of the torus knot~$T(p,q)$.
In Section~\ref{sec:TwistedPolynomial}, we use these results to compute Alexander polynomials of~$T(p,q)$ twisted by metabelian representations.
In Section~\ref{sec:Metabolisers}, we review some facts about linking forms.
Finally in Section~\ref{sec:MainTheorem}, we prove Theorem~\ref{thm:Main}.

\subsection*{Acknowledgments}
AC thanks the MPIM for its financial support and hospitality. MHK was partly supported by the POSCO TJ Park Science Fellowship and by NRF grant 2019R1A3B2067839.
WP was supported by the National Science Center grant 2016/22/E/ST1/00040.
We wish to thank the CIRM in Luminy for providing excellent conditions where the bulk of this work was carried out.

\subsection*{Conventions}
Manifolds are assumed to be compact and oriented.
Throughout the paper, the~$p$-fold branched cover of a knot is denoted~$\Sigma_p(K)$, and~$\lambda_p(K)$ denotes the linking form on~$H_1(\Sigma_p(K))$.

\section{Branched covers of torus knots}
\label{sec:branch-covers-torus}

The aim of this section is to describe the~$\Z[\Z_p]$-module structure of~$H_1(\Sigma_p(T(p,q)))$ induced from the~$\Z_p$-covering action on~$\Sigma_p(T(p,q))$ when~$q$ is a prime.
Let~$E(T(p,q))$ be the complement of the torus knot~$T(p,q)$, and let~$E_p(T(p,q))$ be its~$p$-fold cyclic cover.
In Subsection~\ref{sub:KnotGroup}, to set up some notation, we recall the decomposition of~$E(T(p,q))$ coming from the standard genus 1 Heegaard splitting of~$S^3$, as described in \cite[Example 1.24]{Hatcher}. 
In Subsection~\ref{sub:BranchedCover}, this decomposition of~$E(T(p,q))$ is used to decompose~$E_p(T(p,q))$: after that,~$H_1(\Sigma_p(T(p,q)))$ can be computed via a Mayer-Vietoris sequence argument since~$\Sigma_p(T(p,q))$ is a union of~$E_{p}(T(p,q))$ with a solid torus glued along the torus boundary.

\subsection{The homotopy type of~$E(T(p,q))$}
\label{sub:KnotGroup} 

The goal of this subsection is to describe the homotopy type of~$E(T(p,q))$, as well as describe explicit generators for~$\pi_1(E(T(p,q)))$.
To achieve this, we follow closely~\cite[Example 1.24]{Hatcher}. 
\medbreak
Consider the standard decomposition~$S^3=S^1\times D^2\cup D^2\times S^1$ and denote~$S^1\times D^2$ and~$D^2\times S^1$ by~$H_1$ and~$H_2$ respectively;~$H_1 \subset \R^3$ being the bounded solid torus. 
We parametrize the~$(p,q)$-torus knot~$T(p,q)$ on the torus~$H_1\cap H_2$ as follows: 
\begin{equation}\label{equation:torus-knot}T(p,q)=\{(e^{2\pi i pt},e^{2\pi iqt})\mid t\in [0,1]\}\subset S^1\times S^1 = H_1\cap H_2.
\end{equation}
Using this description of~$T(p,q)$, for each~$x\in S^1$, we see that~$T(p,q)$ intersects~$\{x\}\times D^2\subset H_1$ in~$p$ equi-distributed points of~$\{x\}\times \partial D^2$; see Figure~\ref{figure:equi-distributedpoints} for~$p=3$.

\begin{figure}[!htb]
  \centering
  \includegraphics{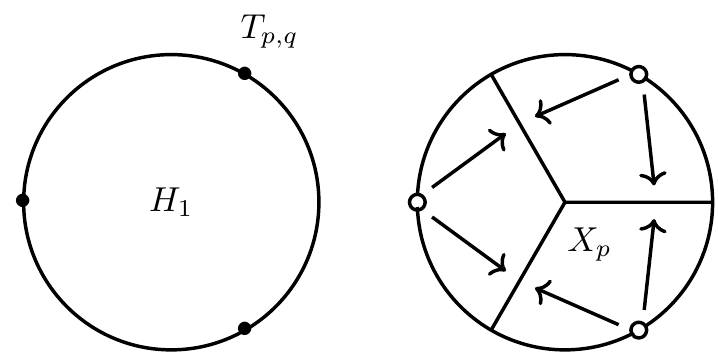}
  \caption{On the left hand side: the intersection~$T(p,q)\cap (\{x\}\times D^2)$. On the right hand side: the complement~$H_1\smallsetminus T(p,q)$ deformation retracts onto a 2-complex~$X_p$.}
  \label{figure:equi-distributedpoints}
\end{figure}

As depicted in the right hand side of Figure~\ref{figure:equi-distributedpoints}, the complement~$H_1\smallsetminus T(p,q)$ deformation retracts onto a 2-complex~$X_p \subset H_1$ which is the mapping cylinder of the degree~$p$ map~$f_p\colon S^1\to c_1$, where~$c_1$ is the core circle of~$H_1$. 
The same argument shows that~$H_2\smallsetminus T(p,q)$ deformation retracts onto the mapping cylinder~$X_q$ of the degree~$q$ map~$f_q\colon S^1\to c_2$, where~$c_2$ is the core circle of~$H_2$. By perturbing~$X_p$ near~$H_1\cap H_2$, we can arrange that~$X_p$ and~$X_q$ match up. 
Next, let~$X_{p,q}$ be the union of~$X_p$ and~$X_q$. Note that~$X_{p,q}$ is homeomorphic to the double mapping cylinder of the maps~$f_p$ and~$f_q$, defined by 
\[X_{p,q}:= S^1\times[0,1]\cup c_1 \cup c_2/{\sim}\]
where~$(z,0)\sim f_p(z)$ and~$(z,1)\sim f_q(z)$ for all~$z\in S^1$ (see Figure~\ref{figure:doublemappingcylinder}). By van Kampen's theorem, 
\[\pi_1(X_{p,q})\cong \langle c_1,c_2\mid c_1^p=c_2^q\rangle.\]
Summarizing, we have the following proposition which is implicit in \cite[Example 1.24]{Hatcher}:

\begin{proposition}[{\cite[Example 1.24]{Hatcher}}]\label{prop:complement-torus-knot}
  There is a deformation retraction~$E(T(p,q))\to X_{p,q}$ sending~$H_1\smallsetminus T(p,q)$ and~$H_2\smallsetminus T(p,q)$ to~$X_p$ and~$X_q$.  In particular,~$\pi_1(E(T(p,q)))\cong \langle c_1,c_2\mid c_1^p=c_2^q\rangle$ where~$c_i$ is the core circle of~$H_i$ for~$i=1,2$. 
\end{proposition}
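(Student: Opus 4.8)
The plan is to follow Hatcher's computation closely, making explicit the two pieces of the decomposition and the subspace along which they are glued. First I would set up the standard genus one Heegaard splitting $S^3 = H_1 \cup H_2$ with $H_1 \cap H_2 = T^2$, and fix the parametrisation of $T(p,q)$ on $T^2$ given in~\eqref{equation:torus-knot}. The key local observation, already recorded above, is that for each $x \in S^1$ the knot meets the meridian disc $\{x\} \times D^2 \subset H_1$ in exactly $p$ points of $\{x\} \times \partial D^2$; I would use this to justify that $H_1 \setminus T(p,q)$ deformation retracts onto the $2$-complex $X_p$, the mapping cylinder of the degree $p$ map $f_p \colon S^1 \to c_1$ onto the core circle $c_1$ of $H_1$. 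The symmetric statement gives a deformation retraction of $H_2 \setminus T(p,q)$ onto $X_q$, the mapping cylinder of the degree $q$ map $f_q \colon S^1 \to c_2$.

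Next I would check that these two deformation retractions can be performed compatibly on the overlap. The point is that both $X_p$ and $X_q$ contain a collar neighbourhood of $T^2 \setminus T(p,q)$, and after a small perturbation of $X_p$ near $H_1 \cap H_2$ one arranges that $X_p \cap X_q$ is precisely the annulus $S^1 \times \{1/2\}$ (with $T(p,q)$ itself removed collapsing to this annulus). Consequently $E(T(p,q)) = (H_1 \setminus T(p,q)) \cup (H_2 \setminus T(p,q))$ deformation retracts onto $X_{p,q} := X_p \cup X_q$, which by construction is homeomorphic to the double mapping cylinder $S^1 \times [0,1] \cup c_1 \cup c_2 / {\sim}$ with $(z,0) \sim f_p(z)$ and $(z,1) \sim f_q(z)$. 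This yields the asserted deformation retraction $E(T(p,q)) \to X_{p,q}$ carrying $H_1 \setminus T(p,q) \to X_p$ and $H_2 \setminus T(p,q) \to X_q$.

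For the fundamental group I would apply van Kampen's theorem to the open cover of $X_{p,q}$ by (slight thickenings of) $X_p$ and $X_q$. Each of $X_p$, $X_q$ is homotopy equivalent to a circle — namely $c_1$ and $c_2$ — and their intersection is homotopy equivalent to the central circle $S^1 \times \{1/2\}$, which is simply connected neither as a space nor in its inclusions: the inclusion into $X_p$ sends its generator to $c_1^p$ (since $f_p$ has degree $p$) and into $X_q$ to $c_2^q$. Van Kampen's theorem then gives the amalgamated product presentation
\[
\pi_1(X_{p,q}) \cong \langle c_1, c_2 \mid c_1^p = c_2^q \rangle,
\]
and composing with the deformation retraction identifies $\pi_1(E(T(p,q)))$ with the same group, with $c_i$ represented by the core circle of $H_i$.

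I expect the only genuinely delicate point to be the compatibility of the two deformation retractions along $H_1 \cap H_2$, i.e. verifying that the perturbation of $X_p$ near the Heegaard torus can be chosen so that $X_p$ and $X_q$ glue to a genuine double mapping cylinder and so that the global deformation retraction exists; everything else is a direct reading of~\cite[Example 1.24]{Hatcher} together with a routine van Kampen computation. Since the statement is explicitly attributed to that reference, I would keep this verification brief and cite Hatcher for the details.
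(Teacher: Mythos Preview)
Your proposal is correct and follows essentially the same approach as the paper: the paper's argument is the discussion in Subsection~\ref{sub:KnotGroup} immediately preceding the proposition (the proposition is stated as a summary of that discussion, attributed to Hatcher). Both build the deformation retractions of $H_i\setminus T(p,q)$ onto the mapping cylinders $X_p,X_q$, perturb near $H_1\cap H_2$ to match them into the double mapping cylinder $X_{p,q}$, and then apply van Kampen; your write-up is, if anything, slightly more explicit about the van Kampen step and the compatibility issue.
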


\subsection{The computation of~$H_1(\Sigma_p(T(p,q)))$ as a~$\Z[\Z_p]$-module}
\label{sub:BranchedCover}

In this subsection, we describe the~$\Z[\Z_p]$-module structure of~$H_1(\Sigma_p(T(p,q)))$. 
To do so, we first study the~$p$-fold cyclic covering map~$\pi\colon E_p(T(p,q))\to E(T(p,q))$, then we compute~$\pi_1(E_p(T(p,q)))$, and finally we describe~$H_1(\Sigma_p(T(p,q)))$.
\medbreak
We first use Subsection~\ref{sub:KnotGroup} to describe a deformation retract of~$E_p(T(p,q))$.
Using~\eqref{equation:torus-knot}, we see that the torus knot~$T(p,q)$ links respectively~$q$ and~$p$ times the core circles~$c_1$ and~$c_2$.
Consequently,~$c_1$ and~$c_2$ are homologous to~$q\mu$ and~$p\mu$ in~$H_1(E(T(p,q)))$, where~$\mu=c_1^kc_2^l$ is a meridian of~$T(p,q)$ and~$pk+ql=1$.
Use~$(X_{p,q})_p$ to denote the pre-image~$\pi^{-1}(X_{p,q})$, and observe that by Proposition~\ref{prop:complement-torus-knot},~$E_p(T(p,q))$ deformation retracts onto~$(X_{p,q})_p$.

\begin{figure}[!htb]
  \centering
  \includegraphics{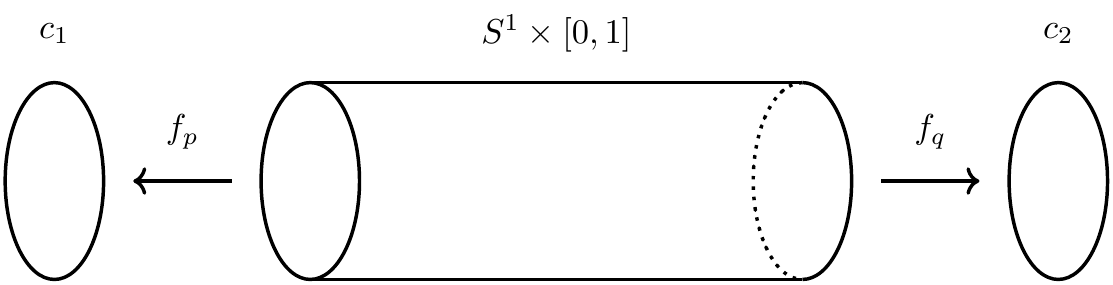}
  \caption{The double mapping cylinder~$X_{p,q}$ obtained by gluing~$S^1\times [0,1]$ with the circles~$c_1$ and~$c_2$ by the degree~$p$ and~$q$ maps~$f_p$ and~$f_q$.}
  \label{figure:doublemappingcylinder}
\end{figure}

We describe~$\pi_1(E_p(T(p,q)))$ by studying the homotopy type of~$(X_{p,q})_p$.
The (restricted) covering map~$\pi\colon (X_{p,q})_p\to X_{p,q}$ corresponds to the homomorphism~$\pi_1(X_{p,q})\to \Z_p$ sending~$c_1$ to~$q\in \Z_p$ and~$c_2$ to~$0\in \Z_p$. 
We use~$\pi_*\colon \pi_1( (X_{p,q})_p)\to \pi_1(X_{p,q})$ to denote the induced map.
Let~$a$ be the pre-image~$\pi^{-1}(c_1)$ and let~$b_0,\ldots,b_{p-1}$ be the components of the pre-image~$\pi^{-1}(c_2)$;
we choose the indices of the~$b_i$'s so that 
%% Important equation!
\begin{equation}
  \label{eq:bi}
  \pi_*(b_i)=\mu^i c_2\mu^{-i} \quad \text{ for } i=0,\ldots, p-1.
\end{equation}
Since~$\pi$ is a covering map, the induced map~$\pi_*\colon \pi_1( (X_{p,q})_p)\to \pi_1(X_{p,q})$ is injective.
For this reason, we shall often identify~$b_i$ with~$\mu^i c_2\mu^{-i}$.
Since~$X_{p,q}$ is a double mapping cylinder, so is~$(X_{p,q})_p$. 
\begin{figure}[htb!]
  \centering
  \includegraphics{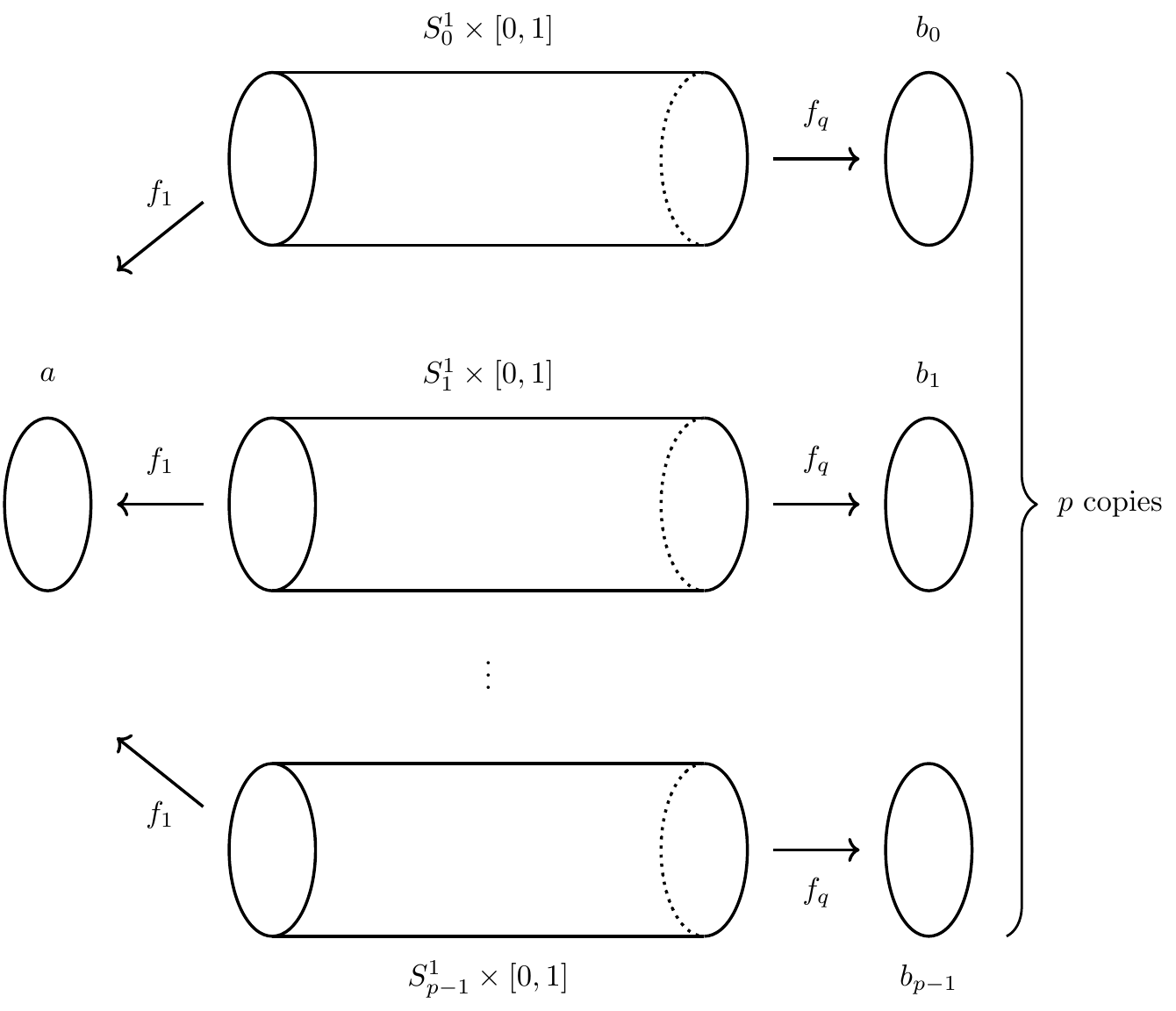}
  \caption{The~$p$-fold cyclic cover~$(X_{p,q})_p$ of~$X_{p,q}$ is also a double mapping cylinder, where~$f_1$ and~$f_q$ denote the degree~$1$ and the degree~$q$ maps, respectively.}
  \label{figure:(X_{p,q})_p}
\end{figure}
More precisely as illustrated in Figure~\ref{figure:(X_{p,q})_p}, we have
\[(X_{p,q})_p=\bigcup_{i=0}^{p-1}S^1_i\times [0,1] \cup a\cup b_0\cup\cdots\cup b_{p-1}/{\sim},\]
where each~$S^1_i\times \{0\}$ is identified with the circle~$a$ by the identity map, and~$S^1_i\times \{1\}$ is identified with the circle~$b_i$ by the degree~$q$ map. By van Kampen's theorem, we deduce that
\[\pi_1((X_{p,q})_p)\cong \langle b_{0},b_{1},\ldots,b_{p-1} \mid b_{i}^{q} = b_{j}^{q} \text{ for }  0 \leq i \neq j \leq p-1 \rangle.\]
Since~$E_p(T(p,q))$ deformation retracts onto~$(X_{p,q})_p$, we obtain the following proposition.

\begin{proposition}
  \label{prop:FundamentalGroupCyclicCover} 
  Let~$\pi\colon E_p(T(p,q))\to E(T(p,q))$ be the~$p$-fold cyclic covering and let~$b_0,b_1,\ldots,b_{p-1}$ be the homotopy classes of the components of~$\pi^{-1}(c_2)$ so that~$\pi_*(b_i)=\mu^i c_2\mu^{-i}$. Then 
  \[\pi_{1}(E_p(T(p,q))) = \langle b_{0},b_{1},\ldots,b_{p-1} \mid b_{i}^{q} = b_{j}^{q} \text{ for }  0 \leq i \neq j \leq p-1 \rangle.\]
\end{proposition}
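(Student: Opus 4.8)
The plan is to derive the presentation of $\pi_1(E_p(T(p,q)))$ by combining the deformation retraction $E_p(T(p,q)) \to (X_{p,q})_p$ (established in the preceding discussion via Proposition~\ref{prop:complement-torus-knot}) with a van~Kampen computation on the double mapping cylinder $(X_{p,q})_p$. First I would make precise the claim that the covering $\pi\colon (X_{p,q})_p \to X_{p,q}$ corresponds to the homomorphism $\pi_1(X_{p,q}) = \langle c_1, c_2 \mid c_1^p = c_2^q\rangle \to \Z_p$ sending $c_1 \mapsto q$ and $c_2 \mapsto 0$; this uses the fact, recorded above, that $c_1$ and $c_2$ are homologous to $q\mu$ and $p\mu$ respectively, so that the abelianization-then-reduce map realizing the cyclic cover has exactly this effect, and one checks this is a well-defined surjection since $c_1^p = c_2^q$ maps to $pq \equiv 0$. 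Because $p$ and $q$ are coprime, $c_1 \mapsto q$ generates $\Z_p$, so $c_2$ and its conjugates (and the square $S^1 \times [0,1]$ pieces) lift in the manner described, giving the $p$ translates $S^1_i \times [0,1]$, the single circle $a = \pi^{-1}(c_1)$ covering $c_1$ with degree $p$, and the $p$ circles $b_0, \dots, b_{p-1}$ each covering $c_2$ homeomorphically, indexed as in~\eqref{eq:bi}.

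Next I would identify the homotopy type of $(X_{p,q})_p$ as the double mapping cylinder pictured in Figure~\ref{figure:(X_{p,q})_p}: the bottom ends $S^1_i \times \{0\}$ all attach to $a$ by the identity (each square covers its image square homeomorphically on that edge, since $c_1$'s preimage is connected and the square maps to $S^1 \times [0,1]$ by a homeomorphism near $t=0$), while the top end $S^1_i \times \{1\}$ attaches to $b_i$ by the degree $q$ map $f_q$ (the square covers $X_{p,q}$'s square, whose top edge wraps $q$ times around $c_2$). Collapsing $a$ — which is contractible in the mapping-cylinder sense, or rather: using that $a$ together with the squares deformation retracts appropriately — I would apply van~Kampen with open neighborhoods of the $b_i$'s together with the squares. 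Each generator $b_i$ contributes one generator; each square $S^1_i \times [0,1]$ forces the loop $a$ (read through that square) to equal $b_i^q$, i.e. all the $b_i^q$ become identified to the common class of $a$. Eliminating the generator $a$ via $a = b_0^q$ then yields exactly the relations $b_i^q = b_j^q$ for $i \neq j$, giving
\[
\pi_1((X_{p,q})_p) \cong \langle b_0, \dots, b_{p-1} \mid b_i^q = b_j^q \text{ for } 0 \le i \neq j \le p-1\rangle.
\]
The proposition then follows immediately from the deformation retraction.

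The main obstacle is bookkeeping rather than conceptual: one must verify carefully that the preimage of $c_1$ is a single circle (so $a$ is connected) while the preimage of $c_2$ has $p$ components, and that the gluing maps are as claimed — this is where coprimality of $p$ and $q$ enters, via $c_1 \mapsto q$ being a unit in $\Z_p$ and $c_2 \mapsto 0$. I would justify this by tracking the subgroup $\pi_*\pi_1((X_{p,q})_p) \le \pi_1(X_{p,q})$ of index $p$ and computing, for each of $c_1$ and $c_2$, the number of cosets in the orbit of the corresponding cyclic subgroup, equivalently the number of lifts of each core circle; a component of $\pi^{-1}(c_i)$ covering $c_i$ with degree $d_i$ corresponds to the order of the image of $c_i$ in $\Z_p$, namely $d_1 = p$ (one component) and $d_2 = 1$ ($p$ components). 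The indexing convention~\eqref{eq:bi} for the $b_i$ is then simply a choice of which lift is labelled $b_i$, compatible with the deck transformation by $\mu$, and requires no further argument beyond noting $\mu$ maps to a generator of $\Z_p$.
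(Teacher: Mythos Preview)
Your proposal is correct and follows essentially the same approach as the paper: the argument given there is precisely the van~Kampen computation on the double mapping cylinder $(X_{p,q})_p$ after identifying the preimages of $c_1$ and $c_2$ exactly as you describe, and then invoking the deformation retraction $E_p(T(p,q))\to (X_{p,q})_p$. Your extra care in verifying the component counts of $\pi^{-1}(c_1)$ and $\pi^{-1}(c_2)$ via the orders of $q$ and $0$ in $\Z_p$, and in eliminating the auxiliary generator $a$ from the presentation, simply makes explicit what the paper leaves implicit.
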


Next, we use this description of~$\pi_1(E_p(T(p,q)))$ to obtain generators of the finite abelian group~$H_1(\Sigma_p(T(p,q)))=TH_1(E_p(T(p,q)))$.
First, note that Proposition~\ref{prop:FundamentalGroupCyclicCover} shows \linebreak that~\(H_{1}(E_p(T(p,q))) \cong \Z \oplus \Z_{q}^{p-1}\) has generators \(b_0,b_1,\ldots b_{p-1}\) and relations~$q b_{i} = q b_{j}$ for each~$i,j$.
In the remainder of this section, we describe a set of generators that will be more convenient for the twisted Alexander polynomial computations of Section~\ref{sec:TwistedPolynomial}.

\begin{remark}
  \label{rem:LiftMeridians}
  While the meridian~$\mu$ of~$T(p,q)$ does not lift to~$E_p(T(p,q))$, a loop representing~$\mu^p$ does.
  Since the projection induced map~$\pi_* \colon \pi_1(E_p(T(p,q))) \to \pi_1(E(T(p,q)))$ is injective, we slightly abuse notations and also write~$\mu^p$ for the homotopy class of this lift in~$\pi_1(E_p(T(p,q)))$.
\end{remark}

In what follows, we make no notational distinction between elements in~$\pi_1(E_p(T(p,q)))$ and elements in~$H_1(E_p(T(p,q)))$, despite switching from multiplicative to additive notations.
In some rare instances, we will also use the multiplicative notation in homology. 
Keeping this in mind, for~$i=0,\ldots, p-1~$, we consider~$\mu^{-p}b_i$ in~$\pi_1(E_p(T(p,q)))$ and~$x_i:=b_i-\mu^p$ in~$H_1(E_p(T(p,q)))$.
The next proposition describes the homology group~$H_{1}(\Sigma_{p}(T(p,q)))$ as a~$\Z[\Z_p]$-module.

\begin{proposition}\label{prop:homology-group-cover}
  The abelian group \(H_{1}(\Sigma_{p}(T(p,q))) \cong \Z_{q}^{p-1}\)  is generated by the~$x_{i} =b_i-\mu^p$,
  and these elements satisfy the following relations:
  \begin{enumerate}
  \item \(x_{0}+x_{1}+\cdots+x_{p-1}=0\),
  \item \(x_{i} = t^{i}x_{0}\),
    where \(t\) denotes the covering transformation of \(\Sigma_{p}(T(p,q))\).
  \end{enumerate}
  In particular, there exists an isomorphism of \(\Z[t^{\pm1}]\)-modules
  \[H_{1}(\Sigma_{p}(T(p,q))) \cong \Z_{q}[t]/(1+t+t^{2}+\cdots+t^{p-1}).\]
\end{proposition}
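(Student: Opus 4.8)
The plan is to compute $H_1(\Sigma_p(T(p,q)))$ by gluing a solid torus to $E_p(T(p,q))$ along its boundary torus and analyzing the resulting Mayer--Vietoris sequence, while keeping track of the $\Z[\Z_p]$-action throughout. First I would set up the branched cover decomposition $\Sigma_p(T(p,q)) = E_p(T(p,q)) \cup_{T^2} (S^1 \times D^2)$, where the meridian of the glued solid torus maps to the lift of $\mu^p$ (the class from Remark~\ref{rem:LiftMeridians}), since $\mu^p$ is the boundary of a meridian disk of the branch locus upstairs. Abelianizing Proposition~\ref{prop:FundamentalGroupCyclicCover} gives $H_1(E_p(T(p,q))) \cong \Z \oplus \Z_q^{p-1}$ with generators $b_0,\dots,b_{p-1}$ subject to $q(b_i - b_j) = 0$; the free $\Z$ summand is detected by the sum $b_0 + \cdots + b_{p-1}$ (or equivalently by $\mu^p$, using the linking number computation $c_1 \sim q\mu$, $c_2 \sim p\mu$ from Subsection~\ref{sub:BranchedCover}, which shows $b_i \sim p\mu$ and hence $p\mu^p$ relates to $\sum b_i$). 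Filling in the solid torus kills the class of $\mu^p$, so $H_1(\Sigma_p(T(p,q)))$ is the torsion quotient, which is $\Z_q^{p-1}$; this identifies the order and verifies relation~(1) once we rewrite everything in terms of $x_i = b_i - \mu^p$.

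Next I would pin down the $\Z[\Z_p]$-module structure, i.e.\ relation~(2). The covering transformation $t$ of $E_p(T(p,q)) \to E(T(p,q))$ is induced by conjugation by $\mu$ downstairs: from~\eqref{eq:bi} we have $\pi_*(b_i) = \mu^i c_2 \mu^{-i}$, so $t$ sends $b_i$ to $b_{i+1}$ for $i = 0, \dots, p-2$ and $b_{p-1}$ to $\mu^p b_0 \mu^{-p}$ (the index wraps around with a $\mu^p$ correction, since $\mu^p$ is the well-defined lift). Passing to homology, $t \cdot b_i = b_{i+1}$ for $i < p-1$ and $t \cdot b_{p-1} = b_0$ (the $\mu^p$ conjugation is trivial in abelian homology). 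Since $t$ fixes the class $\mu^p$ (it is the lift of the central-ish meridian power, invariant under the deck group), we get $t \cdot x_i = t\cdot b_i - \mu^p = b_{i+1} - \mu^p = x_{i+1}$ for $i < p-1$, and $t \cdot x_{p-1} = b_0 - \mu^p = x_0$. Combined with relation~(1), this is exactly $x_i = t^i x_0$ together with $(1 + t + \cdots + t^{p-1})x_0 = 0$.

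Finally, I would package the result: the $\Z[t^{\pm1}]$-module $H_1(\Sigma_p(T(p,q)))$ is generated by the single element $x_0$, on which $t$ acts with $\sum_{j=0}^{p-1} t^j x_0 = 0$, and the underlying abelian group is $\Z_q^{p-1}$; since $\Z_q[t]/(1 + t + \cdots + t^{p-1})$ is also generated by $1$ with the same relation and has the same order $q^{p-1}$ as an abelian group, the obvious surjection $\Z_q[t]/(1+t+\cdots+t^{p-1}) \to H_1(\Sigma_p(T(p,q)))$, $1 \mapsto x_0$, is an isomorphism.

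I expect the main obstacle to be the careful bookkeeping of the $\mu^p$ correction terms when describing the deck transformation action: one must be scrupulous about which meridian power actually lifts (Remark~\ref{rem:LiftMeridians}) and verify that the various $\mu^p$-conjugations and $\mu^p$-shifts either cancel in homology or are absorbed into the definition $x_i = b_i - \mu^p$, so that the clean relations $x_i = t^i x_0$ emerge. A secondary point requiring care is confirming, via the linking-number data, that the $\Z$ summand of $H_1(E_p)$ is exactly the one killed by the Dehn filling, so that no extra torsion survives or is introduced; this is where the hypothesis $\gcd(p,q)=1$ enters.
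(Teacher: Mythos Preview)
Your overall strategy matches the paper's: both identify $H_1(\Sigma_p(T(p,q)))$ with the torsion of $H_1(E_p(T(p,q)))$ (equivalently, with the quotient by $\mu^p$), verify that the deck transformation $t$ cyclically permutes the $b_i$ while fixing $\mu^p$, and read off relation~(2) from $x_i = b_i - \mu^p$. The paper does not actually run Mayer--Vietoris; it instead uses the transfer $\pi_*$ to characterise the torsion subgroup as $\{\sum a_ib_i : \sum a_i = 0\}$, but this is a cosmetic difference.

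There is, however, a real gap in your verification of relation~(1). Knowing that $\mu^p$ generates the free summand only tells you that $\sum_i b_i$ and $p\mu^p$ agree in $H_1(E_p)/\text{torsion}$, so that $\sum_i x_i = \sum_i b_i - p\mu^p$ is torsion; it does \emph{not} show it is zero, and your phrase ``$p\mu^p$ relates to $\sum b_i$'' is exactly where the argument stops short. The paper closes this gap by an explicit computation: writing $\mu = c_1^k c_2^l$ with $qk + pl = 1$ and expanding the word $\mu^p$ in terms of the conjugates $b_i = \mu^i c_2 \mu^{-i}$ gives the identity $\mu^p = l(b_0 + \cdots + b_{p-1}) + qk\,b_0$ in $H_1(E_p)$; multiplying by $p$ and using the relation $qb_i = qb_j$ then yields $p\mu^p = \sum_i b_i$ on the nose. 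An alternative patch you could have used (but did not) is to observe that $\sum_i x_i$ is $t$-invariant, while $t-1$ acts invertibly on $H_1(\Sigma_p(K))$ for any knot $K$ (since $\Delta_K(1) = \pm 1$), forcing $\sum_i x_i = 0$. Your closing remarks correctly flag the $\mu^p$ bookkeeping as the crux, but the specific missing ingredient is precisely the equality $p\mu^p = \sum_i b_i$ in $H_1(E_p)$, not merely in the free quotient.
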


\begin{proof}
  The proof has four steps.
  Firstly, we establish a criterion for an element in~$H_1(\Sigma_p(T(p,q)))$ to be torsion; secondly, we prove that the~$x_i$ are torsion; thirdly, we show that that~$x_i$ generate~$TH_1\Sigma_p(T(p,q))$ as an abelian group; fourthly and finally we prove that the~$x_i$ satisfy the two identities stated in the lemma.

  We assert that an element \(x =\sum_{i=0}^{p-1} a_ib_i\) in~$H_{1}(E_p(T(p,q)))$ is torsion if and only if \(\sum_{i=0}^{p-1} a_i=~0\).
  The map~$\pi_* \colon H_1(E_p(T(p,q))) \to H_1(E(T(p,q)))$ maps~$ TH_{1}(E_p(T(p,q)))$ to zero and maps the infinite cyclic summand isomorphically onto~$p\Z \cong \Z \langle c_2 \rangle$. \footnote{For any knot~$K$ and prime power~$n$, one has the decomposition~$H_1(E_n(K))=TH_1(E_n(K)) \oplus \Z$, where the~$\Z$ summand is generated by a lift of the~$n$-fold power of the meridian.}
  In particular, a class~$x \in H_1(E_p(T(p,q)))$ is torsion if and only if~$\pi_*(x)=0$.
  On the other hand, using Proposition~\ref{prop:FundamentalGroupCyclicCover}, we deduce that~$\pi$ induces the following map on homology, concluding the proof of the assertion:
  \begin{align*}
    \pi_* \colon H_{1}(E_p(T(p,q))) &\to p\Z \subset \Z=H_1(E(T(p,q)))\\
    \sum_{i=0}^{p-q} a_ib_i&\mapsto \sum_{i=0}^{p-1} a_i.
  \end{align*}
  We move on to the second step: we prove that the homology classes~$x_0,\ldots,x_{p-1}$ are torsion.
  Using the criterion, we must show that~$\pi_*(x_i)=0$ for each~$i$.
  Since~$\pi_*(b_i)=1$, this reduces to showing that~$\pi_*(\mu^p)=1$.
  We start by computing the abelianization of~$\mu^p$.
  Since~$\mu=c_1^kc_2^l$, we notice that in \(\pi_1(E_p(T(p,q)))\), the following equality holds:
  \begin{equation}
    \label{eq:Mup}
    \mu^{p} = (c_{1}^{k}c_{2}^lc_{1}^{-k}) \cdot (c_{1}^{2k} c_{2}^{l} c_{1}^{-2k})  \cdots  (c_{1}^{(p-1)k} c_{2}^{l} c_{1}^{-(p-1)k}) c_{1}^{pk} c_{2}^{l}.
  \end{equation}
  In order to compute the abelianisation of this expression, we claim that for any~\(0 \leq s \leq p-1\), and any~$k$, the equation
  ~$\mu^{s} c_{2} \mu^{-s} = c_{1}^{ks} c_{2} c_{1}^{-ks}$
  holds in~\(H_{1}(E_p(T(p,q))) = \pi_1(E_p(T(p,q)))^{ab}\).
  This claim is a consequence of following direct computation in \(\pi_1(E_p(T(p,q)))\):
  \[\mu^{s} c_{2} \mu^{-s} = \left(\prod_{i=1}^{s-1} c_{1}^{ki} c_{2}^{l} c_{1}^{-ki}\right) \cdot \left( c_{1}^{ks} c_{2} c_{1}^{-ks} \right) \cdot \left(\prod_{i=1}^{s-1} c_{1}^{ki} c_{2}^{-l} c_{1}^{-ki}\right).\]
  Using consecutively~\eqref{eq:Mup}, the equation~$\mu^{s} c_{2} \mu^{-s} = c_{1}^{ks} c_{2} c_{1}^{-ks}$ that we just established, and the identification~$b_i=\mu^ic_2\mu^{-i}$ from~\eqref{eq:bi} (as well as the presentation in Proposition~\ref{prop:complement-torus-knot} and~\(qk+pl=1\)), we obtain the following sequence of equalities in~$H_1(E_p(T(p,q)))$: 
  % {additive versus multiplicative notation}
  \begin{align}
    \label{eq:mup}
    \mu^{p} 
    &=(c_{1}^{k}c_{2}^lc_{1}^{-k}) \cdot (c_{1}^{2k} c_{2}^{l} c_{1}^{-2k})  \cdots  (c_{1}^{(p-1)k} c_{2}^{l} c_{1}^{-(p-1)k}) c_{1}^{pk} c_{2}^{l} \nonumber \\
    &=(\mu c_{2}^l \mu^{-1})(\mu^2 c_{2}^l \mu^{-2})\cdots  (\mu^{(p-1)} c_{2}^{l} \mu^{-(p-1)})c_{1}^{pk} c_{2}^{l} \nonumber \\
    &= l (b_{0}+b_{1}+\cdots+b_{p-1}) + qkb_{0}. 
  \end{align}
  As~$\pi_*(b_i)=1$ for each~$i$, this implies that \(\pi_*(\mu^{p})=1\).
  It follows that \(\pi_*(x_{i}) = \pi_*(b_{i})-\pi_*(\mu^{p})=~0\), and therefore each of the~$x_i$ is torsion.
  This concludes the second step of the proof.

  Thirdly, we show that every element of~$TH_1(E_p(T(p,q)))$ can be written as a linear combination of the~$x_i$ for~$i=0,1,\ldots,p-1$: given \(x = \sum_{i=0}^{p-1}a_ib_i\), adding and substracting~$\mu^p$, using~$\sum_{i=0}^{p-1}a_i=0$ (which holds thanks to the first step) and the definition of~$x_i$, we obtain
  \begin{align*}
    x&=\sum_{i=0}^{p-1}a_ib_i
       =\sum_{i=0}^{p-1}a_i\mu^{p} +\sum_{i=0}^{p-1}a_i(b_i-\mu^{p} )
       =\sum_{i=0}^{p-1}a_ix_i.
  \end{align*}
  Fourthly and finally, we establish the relations~\(x_{0}+x_{1}+\cdots+x_{p-1}=0\) and~$x_i=t^ix_0$.
  The latter relation is clear (since~$b_{i} = t^i b_{0}$ and \(t  \mu^{p} = \mu^{p}\)) and so we focus on the former.
  Using consecutively~\eqref{eq:mup}, the relation \(qb_{i}=qb_{j}\), and the fact that~$pl+qk=1$, we notice that the following equation holds in~\(H_{1}(E_p(T(p,q)))\): 
  \begin{align*}
    p \mu^p &= pl(b_{0}+b_{1}+\cdots+b_{p-1}) + pqkb_{0}  \\
            &= pl(b_{0}+b_{1}+\cdots+b_{p-1}) + qk(b_{0}+b_{1}+\cdots+b_{p-1}) \\
            &= (b_{0}+b_{1}+\cdots+b_{p-1}).
  \end{align*}
  The conclusion now promptly follows from the definition of the~$x_i$, establishing the proposition.
\end{proof}

Assume that~$q$ is a prime.
In this case~$H_{1}(\Sigma_{p}(T(p,q)))$ becomes an~$\F_q$-vector space.
The covering action~$t$ is then an~$\F_q$-linear endomorphism of \(V_{p,q}\).

\section{Twisted polynomials of torus knots}
\label{sec:TwistedPolynomial}

In this this section, we compute the  Alexander polynomial of the~$0$-framed surgery~$M_{T(p,q)}$ twisted by a metabelian representation~$\alpha_{T(p,q)}(p,\chi) \colon \pi_1(M_{T(p,q)}) \to GL_p(\LC)$ that frequently appears in Casson-Gordon theory~\cite{HeraldKirkLivingston}.
In Subsection~\ref{sub:MetabRep}, we recall the definition of~$\alpha_K(p,\chi)$ for a general knot~$K$, in Subsection~\ref{sub:MetabForTorusKnot}, we restrict to torus knots, and in Subsection~\ref{sub:TwistedPolynomial}, we compute the relevant twisted Alexander polynomials.

\subsection{The metabelian representation~$\alpha_K(p,\chi)$}
\label{sub:MetabRep}
In this subsection, given a knot~$K$ and a positive integer~$p$, we recall the definition of the representation~$\alpha_K(p,\chi)\colon \pi_1(M_K) \to GL_p(\LC)$ from~\cite{HeraldKirkLivingston}.
In what follows,~$E_K$ denotes the exterior of~$K$ and~$M_K$ denotes its~$0$-framed surgery.
Finally, we use~$\xi_m:=e^{2\pi i/m}$ to denote the~$m$-th primary root of unity.
\medbreak
We use~$H_1(E_K;\Z[t_{K}^{\pm1}])\cong \pi_1(E_K)^{(1)}/\pi_1(E_K)^{(2)}$ to denote the Alexander module of~$K$.
In what follows, we shall frequently identify~$H_1(\Sigma_p(K))$ with~$H_1(E_K;\Z[t_K^{\pm 1}])/(t_K^p-1)$, as for instance in~\cite[Corollary 2.4]{FriedlEta}.
Consider the following composition of canonical projections:
\begin{equation}
  \label{eq:qK}
  q_K \colon \pi_{1}(M_{K})^{(1)}  \to  H_1(E_K;\Z[t_K^{\pm 1}]) \to H_1(\Sigma_p(K)).
\end{equation}
Use~$\phi_K \colon \pi_1(E_K) \to  H_1(E_K;\Z)\cong \Z=\langle t_K \rangle$  to denote the abelianization homomorphism, and fix an element~$\mu_{K}$ in~$\pi_1(E_K)$ such that~$\phi_K(\mu_{K})=t_K$.
Note that for every~$g \in \pi_1(E_K)$, we have~$\phi_K(\mu_K^{-\phi_K(g)}g)=1$.
Since~$\phi_K$ is the abelianization map, we deduce that~$\mu_K^{-\phi_K(g)}g$ belongs to~$\pi_1(E_K)^{(1)}$.
Combining these notations, we consider the following representation:

\begin{align}
  \label{eq:Matrix}
  \alpha_K(p,\chi) &\colon \pi_1(E_K) \to ~\operatorname{GL}_p(\LC)  \nonumber \\
  \alpha_K(p,\chi)(g)&= \begin{pmatrix}
    0& 1 & \cdots &0 \\
    \vdots & \vdots & \ddots & \vdots  \\
    0 & 0 & \cdots & 1 \\
    t & 0 & \cdots & 0
  \end{pmatrix}^{\phi_K(g)}
                     \begin{pmatrix}
                       \xi_{m}^{\chi(q_K(\mu_K^{-\phi_K(g)}g))} & 0 & \cdots &0 \\
                       0 & \xi_{m}^{\chi(t_K \cdot q_K(\mu_K^{-\phi_K(g)}g))} & \cdots &0 \\
                       \vdots & \vdots & \ddots & \vdots \\
                       0 & 0 & \cdots & \xi_{m}^{\chi(t_K^{p-1} \cdot q_K(\mu_K^{-\phi_K(g)}g))}
                     \end{pmatrix} \nonumber \\
                   &=:A_p(t)^{\phi_K(g)}\operatorname{diag}\left(\xi_m^{\chi(q_K(\mu_K^{-\phi_K(g)}g))},\ldots,\xi_m^{\chi(t_K^{p-1} \cdot q_K(\mu_K^{-\phi_K(g)}g))}\right).
\end{align}
Note that~$\alpha(p,\chi)$ can equally well be defined on~$\pi_1(M_K)$ instead of~$\pi_1(E_K)$: the definition can be adapted \textit{verbatim}, and we use the same notation:
$$   \alpha_K(p,\chi) \colon \pi_1(M_K) \to ~\operatorname{GL}_p(\LC). ~$$
A closely related observation is that~$\alpha(p,\chi)$ is a metabelian representation and therefore vanishes on the longitude of~$K$; this also explains why~$\alpha_K(p,\chi)$ descends to ~$\pi_1(M_K)~$.

\subsection{An explicit description of~$\alpha_{T(p,q)}(p,\chi)$.}
\label{sub:MetabForTorusKnot}
We use the presentation of~$\pi_1(E_{T(p,q)})$ from Proposition~\ref{prop:complement-torus-knot} to describe the representation~$\alpha_{T(p,q)}(p,\chi)$.
In this subsection, we will often set~$K:=T(p,q)$ in order to avoid cumbersome notations such as~$q_{T(p,q)}$.
\medbreak
We recall the definition of the generators~$x_0,\ldots,x_{p-1}$ of~$H_1(\Sigma_p(K)) \cong \Z_q^{p-1}$ described in Proposition~\ref{prop:homology-group-cover}, referring to Section~\ref{sec:branch-covers-torus} for further details.
Using the notations of that section, we set~$x_i=b_i-\mu^p$, where~$\mu$ is a meridian of~$K$.
Thinking of~$x_i$ as the abelianisation of~$\mu^{-p}b_i$, and using Proposition~\ref{prop:FundamentalGroupCyclicCover} to identify~$b_i$ with~$\mu^ic_2\mu^{-i}$, we have 
\begin{equation}
  \label{eq:PracticalForCharacter}
  t_K^iq_K(\mu^{-p}c_2)
  =q_K(\mu^{-p}\mu^i c_2\mu^{-i})
  =q_K(\mu^{-p}b_i)
  =x_i.
\end{equation}
Recall furthermore that Proposition~\ref{prop:homology-group-cover} also established the relations~$x_0+\cdots+x_{p-1}=0$ as well as~$t_Kx_i=x_{i+1}$. 
The next result follows immediately from these considerations.

\begin{lemma}
  \label{lem:Characaters}
  Let~$p,q>0$ be two coprime integers.
  The abelian group of characters on \(H_1(\Sigma_{p}(T(p,q)))\) is isomorphic to
  \[\{ \mathbf{a}:=(a_{1},\ldots,a_{p}) \in \Z_{q}^{p} \ | \ a_1+\cdots+a_p=0\}.\]
  The isomorphism maps a character~$\chi$ to~$(\chi(x_0),\ldots,\chi(x_{p-1}))$, and we write~$\chi_{\mathbf{a}}$ for the character associated to~$\mathbf{a}$.
\end{lemma}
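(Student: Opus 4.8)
The plan is to read off the characters directly from the $\Z[\Z_p]$-module structure of $H_1(\Sigma_p(T(p,q)))$ established in Proposition~\ref{prop:homology-group-cover}. First I would recall that by that proposition $H_1(\Sigma_p(T(p,q)))$ is generated as an abelian group by $x_0,\ldots,x_{p-1}$ subject to the single relation $x_0+x_1+\cdots+x_{p-1}=0$ (the relation $x_i=t^ix_0$ is automatically encoded once we name all the $x_i$, and the module has order $q^{p-1}$, confirming $H_1(\Sigma_p(T(p,q)))\cong\Z_q^{p-1}$ with the $x_i$ a spanning set satisfying exactly that one relation). A character is by definition a homomorphism $\chi\colon H_1(\Sigma_p(T(p,q)))\to \Q/\Z$; since the group is $q$-torsion, such a $\chi$ factors through the $q$-torsion subgroup $\tfrac1q\Z/\Z\cong\Z_q$ of $\Q/\Z$, so equivalently $\chi\colon H_1(\Sigma_p(T(p,q)))\to\Z_q$.

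Next I would set up the evaluation map $\chi\mapsto(\chi(x_0),\ldots,\chi(x_{p-1}))$ into $\Z_q^p$ and check it lands in the subspace cut out by $a_1+\cdots+a_p=0$: this is immediate from applying $\chi$ to the relation $x_0+\cdots+x_{p-1}=0$. Conversely, given $\mathbf{a}=(a_1,\ldots,a_p)$ with $\sum a_i=0$, the assignment $x_{i-1}\mapsto a_i$ respects the defining relation and hence extends (uniquely, since the $x_i$ generate) to a homomorphism $\chi_{\mathbf{a}}$; this shows the evaluation map is surjective, and injectivity is clear because the $x_i$ generate. Hence $\chi\mapsto(\chi(x_0),\ldots,\chi(x_{p-1}))$ is a group isomorphism from the character group onto $\{\mathbf{a}\in\Z_q^p : a_1+\cdots+a_p=0\}$, which is exactly the claimed statement, with $\chi_{\mathbf{a}}$ the character determined by $\chi_{\mathbf{a}}(x_{i-1})=a_i$.

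Strictly speaking there is one small point worth spelling out: one must verify that the presentation of $H_1(\Sigma_p(T(p,q)))$ by generators $x_0,\ldots,x_{p-1}$ and the single relation $\sum x_i=0$ is a genuine presentation of the abelian group (not merely a spanning set plus one relation among possibly more), i.e. that $\Z^p/(1,1,\ldots,1)\cong\Z^{p-1}$ surjects onto $\Z_q^{p-1}$ with kernel exactly $q\cdot(\Z^p/(1,\ldots,1))$. This follows from the order count $|H_1(\Sigma_p(T(p,q)))|=q^{p-1}$ in Proposition~\ref{prop:homology-group-cover} together with the fact that $qx_i=0$ (since the group is $\Z_q^{p-1}$), so the natural surjection $(\Z/q)^p/(1,\ldots,1)\twoheadrightarrow H_1(\Sigma_p(T(p,q)))$ is an isomorphism by cardinality. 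I do not expect any real obstacle here; the only mild subtlety is making the bookkeeping between the multiplicative $\pi_1$-description and the additive homology description clean, and noting that $\operatorname{Hom}(-,\Q/\Z)$ applied to $\Z_q^{p-1}$ again gives $\Z_q^{p-1}$, so the isomorphism type on both sides matches. The lemma then follows immediately, as the excerpt already indicates via equation~\eqref{eq:PracticalForCharacter} and the relations recalled from Proposition~\ref{prop:homology-group-cover}.
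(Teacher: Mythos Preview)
Your proof is correct and takes essentially the same approach as the paper: the paper simply states that the lemma ``follows immediately'' from the relations $x_0+\cdots+x_{p-1}=0$ and $t_Kx_i=x_{i+1}$ established in Proposition~\ref{prop:homology-group-cover}, whereas you have carefully spelled out the details of why the evaluation map $\chi\mapsto(\chi(x_0),\ldots,\chi(x_{p-1}))$ is an isomorphism onto the stated subgroup. Your extra paragraph verifying that $(\Z/q)^p/(1,\ldots,1)\to H_1(\Sigma_p(T(p,q)))$ is an isomorphism by a cardinality count is a nice touch that the paper leaves implicit.
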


Recall that Proposition~\ref{prop:complement-torus-knot} described a two-generator one-relation presentation for the knot group~$\pi_1(E_{T(p,q)})$: the generators were denoted by~$c_1$ and~$c_2$, and the unique relator was~$c_1^pc_2^{-q}$.
The next proposition describes the image of these generators under ~$\alpha(p,\chi):=\alpha_{T(p,q)}(p,\chi)$.
This will be useful in Proposition~\ref{prop:TwistedAlexanderPolynomial} when we compute the twisted Alexander polynomial of~$E_{T(p,q)}$.

\begin{proposition}
  \label{prop:alphac1c2}
  Let~$p,q>0$ be two coprime integers.
  For a character \(\chi=\chi_{\mathbf{a}}\) on \(H_1(\Sigma_p(T(p,q)))\), the representation~$\alpha(p,\chi)$ is conjugated to a representation~$\alpha'(p,\chi)$ such that 
  \begin{align*}
    \alpha'(p,\chi)(c_{2}) &= t \cdot \diag(\xi_q^{a_{1}},\ldots,\xi_q^{a_{p}}), \nonumber \\
    \alpha'(p,\chi)(c_{1}) &= A_{p}(t)^{q}.
  \end{align*}
\end{proposition}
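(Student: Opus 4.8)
The plan is to unwind the definition of $\alpha_K(p,\chi)$ from~\eqref{eq:Matrix} on the two generators $c_1,c_2$ of $\pi_1(E_{T(p,q)})$, using the presentation from Proposition~\ref{prop:complement-torus-knot} and the homological computations from Section~\ref{sec:branch-covers-torus}. Recall $\mu=c_1^kc_2^l$ with $pk+ql=1$, so $\phi_K(c_1)=q$ and $\phi_K(c_2)=p$ (since $c_1\sim q\mu$ and $c_2\sim p\mu$ in $H_1(E_K)$). Thus $A_p(t)^{\phi_K(c_1)}=A_p(t)^q$ and $A_p(t)^{\phi_K(c_2)}=A_p(t)^p=tI$. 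The point is that the diagonal part of $\alpha(p,\chi)(c_1)$ can be conjugated away, while the diagonal part of $\alpha(p,\chi)(c_2)$ is computed explicitly via~\eqref{eq:PracticalForCharacter}.

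First I would treat $c_2$. By definition, $\alpha(p,\chi)(c_2)=A_p(t)^p\cdot\diag\big(\xi_m^{\chi(q_K(\mu^{-p}c_2))},\ldots,\xi_m^{\chi(t_K^{p-1}q_K(\mu^{-p}c_2))}\big)$, and since $A_p(t)^p=tI$ and, by~\eqref{eq:PracticalForCharacter}, $t_K^iq_K(\mu^{-p}c_2)=x_i$, this is exactly $t\cdot\diag(\xi_q^{a_1},\ldots,\xi_q^{a_p})$ once we set $m=q$ and use Lemma~\ref{lem:Characaters} to write $a_{i+1}=\chi(x_i)$ (the index shift from $0,\dots,p-1$ to $1,\dots,p$ is cosmetic). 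So $\alpha(p,\chi)(c_2)$ already has the desired form before any conjugation.

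Next I would handle $c_1$. Writing $\mu^{-\phi_K(c_1)}c_1=\mu^{-q}c_1$, which lies in $\pi_1(E_K)^{(1)}$, we get $\alpha(p,\chi)(c_1)=A_p(t)^q\cdot D$ where $D=\diag\big(\xi_q^{\chi(t_K^jq_K(\mu^{-q}c_1))}\big)_{j=0}^{p-1}$. The claim amounts to showing that one can conjugate by a diagonal matrix $P$ (necessarily commuting with the diagonal factor of $\alpha(p,\chi)(c_2)$, or more precisely chosen compatibly) so that $D$ disappears from $c_1$ while $c_2$ is unchanged up to the stated form. Concretely, since $c_1^p=c_2^q$ is the only relation and $\alpha(p,\chi)$ is a genuine representation, $D$ is constrained: from $(A_p(t)^qD)^p = (t\diag(\xi_q^{a_i}))^q$ one reads off that the entries of $D$ are determined by the $a_i$, and the ambiguity in the lift $\mu_K$ (equivalently, in the choice of diagonal conjugating matrix) is exactly enough freedom to normalize $D=I$. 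I would make this precise by exhibiting $P=\diag(p_1,\dots,p_p)$ with $p_{j+1}/p_j$ prescribed so that $P^{-1}A_p(t)^qDP=A_p(t)^q$ and $P^{-1}(t\diag(\xi_q^{a_i}))P=t\diag(\xi_q^{a_i})$ (the latter is automatic as $P$ is diagonal), then define $\alpha'(p,\chi)=P^{-1}\alpha(p,\chi)P$.

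The main obstacle is the bookkeeping in the $c_1$ computation: one must check that the diagonal entries of $D$ conspire (via the relation $c_1^p=c_2^q$ and the cyclic shift structure of $A_p(t)$) so that conjugation by an appropriate diagonal matrix removes $D$ entirely rather than merely simplifying it. This is where the hypothesis $\gcd(p,q)=1$ is used — it guarantees the relevant exponents of $\xi_q$ (indexed by multiples of $k$ or $q$ modulo $p$, resp.\ modulo $q$) run over a full set of residues, so the normalization is possible. The verification is a finite explicit calculation with the permutation-type matrix $A_p(t)$ and the characters $\chi_{\mathbf a}$, and I expect it to go through once the indexing is set up carefully; everything else is a direct substitution into~\eqref{eq:Matrix}.
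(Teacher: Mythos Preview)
Your approach matches the paper's: compute $\alpha(p,\chi)(c_2)$ directly via~\eqref{eq:PracticalForCharacter}, then conjugate by a diagonal matrix (which automatically commutes with the diagonal $\alpha(p,\chi)(c_2)$) to remove the diagonal factor $D$ from $\alpha(p,\chi)(c_1)=A_p(t)^qD$, with $\gcd(p,q)=1$ making this possible. The paper organizes the second step by writing $D=\widetilde{\alpha}(y)$ for $y=q_K(\mu^{-q}c_1)$ and the conjugator as $\widetilde{\alpha}(z)$, reducing the problem to the equation $(t_K^{-q}-1)z=-y$ in $H_1(\Sigma_p(K))$ and checking that $t_K^{-q}-1$ is invertible there; this is exactly your recursive determination of the $p_j$ in different language (note the shift is by $q$ modulo $p$, not by $1$). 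One small correction: the relation $c_1^p=c_2^q$ only yields the single constraint $\prod_j d_j=1$, not a determination of the entries of $D$ from the $a_i$ --- but that constraint is precisely the consistency condition your recursion needs to close up, so the argument goes through.
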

\begin{proof}
  We first compute~$\alpha(p,\chi)(c_{2})$.
  We know that~$\phi_K(c_2)=p$ and~$A_p(t)^p=t \cdot \id$.
  In order to compute the diagonal matrix which appears in the definition of~$\alpha(p,\chi)(c_{2})$ (recall~\eqref{eq:Matrix}), we use~\eqref{eq:PracticalForCharacter} and Lemma~\ref{lem:Characaters} to obtain~$\chi(t_K^{i-1} q_K(\mu^{-p}c_2))=\chi(x_{i-1})=a_i$.
  The first assertion follows:
  \[\alpha(p,\chi)(c_{2}) 
    = t \cdot \diag(\xi_q^{\chi(q_K(\mu^{-p}c_2))},\ldots,\xi_q^{\chi(t_K^{p-1}q_K(\mu^{-p}c_2))})=t \cdot \diag(\xi_q^{a_{1}},\ldots,\xi_q^{a_{p}}).\]
  Next, we study the conjugacy class of~$\alpha(p,\chi)(c_{1})$: we must find an invertible matrix \(X\) such that
  \begin{align}
    X \alpha(p,\chi)(c_{1}) X^{-1} &= A_{p}(t)^{q}, \label{eq:conjugation-c1} \\
    X \alpha(p,\chi)(c_{2}) X^{-1} &= t \cdot \diag(\xi_q^{a_{1}},\ldots,\xi_q^{a_{p}}). \label{eq:conjugation-c2}
  \end{align}
  For \(v \in H_{1}(\Sigma_{p}(K))\), we define~$\widetilde{\alpha}(v) := \diag(\xi_{q}^{\chi(v)},\xi_{q}^{\chi(t_{K}v)},\ldots,\xi_{q}^{\chi(t_{K}^{p-1}v)})~$.
  Observe that if we set~\(X := \widetilde{\alpha}(z)\), then~\eqref{eq:conjugation-c2} is satisfied for any \(z \in H_{1}(\Sigma_{p}(K))\): indeed~$\alpha(p,\chi)(c_2)$ commutes with~$X$ since both are diagonal.
  Therefore, we just have to establish the existence of a \(z \in H_{1}(\Sigma_{p}(K))\) such that~\eqref{eq:conjugation-c1} is satisfied for~$X= \widetilde{\alpha}(z)$.
  
  First, for any \(x \in H_{1}(\Sigma_{p}(K))\) a computation shows that the following equation holds:
  \[\widetilde{\alpha}(x) A_{p}(t)^{q} \widetilde{\alpha}(x)^{-1} = A_{p}(t)^{q} \widetilde{\alpha}((t_{K}^{-q}-1)x).\]
  %%% Use semi-direct composition law from HKL
  % \widetilde{\alpha}(x)A^q\widetilde{\alpha}(x)^{-1}
  % =\alpha(0,x)\alpha(t^q,0)\alpha(0,-x)
  % =\alpha(t^q,t^{-1}x)\alpha(0,-x)
  % =\alpha(t^q,t^{-1}x-x)
  % =A^q\widetilde{\alpha}(t^{-1}x-x)
  %%%%%%% 
  Define \(y := q_{K}(\mu^{-q}c_{1}) \) so that~$\alpha(p,\chi)(c_{1}) = A_{p}(t)^{q} \widetilde{\alpha}(y).$
  Consequently, if we set~$X:=\widetilde{\alpha}(z)$ (for any~$z \in H_1(\Sigma_p(K))$), use the definition of~$y$, the fact that~$\widetilde{\alpha}(y)$ and~$X$ commute (both are diagonal), and the aforementioned identity, then we obtain
  \begin{align*}
    X \alpha(p,\chi)(c_{1}) X^{-1} 
    =X  A_{p}(t)^{q} \widetilde{\alpha}(y) X^{-1} 
    &=X  A_{p}(t)^{q}X^{-1} \widetilde{\alpha}(y)  \\
    &= A_{p}(t)^{q} \widetilde{\alpha}((t_{K}^{-q}-1)z+y).
  \end{align*}
  Therefore, if we choose \(z := -(t_{K}^{-q}-1)^{-1}y\), then~\eqref{eq:conjugation-c1} holds.
  For this to make sense however, we must argue that~$(t_{K}^{-q}-1)$ is an automorphism of~$H_{1}(\Sigma_{p}(K))$.
    This is indeed the case: as~$t_K-1$ is an automorphism of~$H_{1}(\Sigma_{p}(K))$, the inverse is given by~$(t_{K}^{-1}-1)^{-1} (1+t_{K}^{-q}+t_{K}^{-2q}+\cdots+t_{K}^{-(k-1)q})$, where~$qk \equiv 1$ mod~$p$. Such a~$k$ exists because~$p$ and~$q$ are coprime.
  % Firstly, recall that if multiplication by \(t_{K}-1\) acts on \(H_{1}(\Sigma_{p}(K))\) as an isomorphism.
  % Choose \(s \in \Z\) such that \(r \cdot s \equiv 1 \mod p\), then
  % \[t_{K}-1 = \left( t_{K}^{r} \right)^{s} - 1 = (t_{K}^{r}-1) (1+t_{K}^{r}+t_{K}^{2r}+\cdots+t_{K}^{(s-1)r}).\]
  % Consequently \((t_{K}^{r}-1)\) is an isomorphism with
  % \[(t_{K}^{r}-1)^{-1} = (t_{K}-1)^{-1} (1+t_{K}^{r}+t_{K}^{2r}+\cdots+t_{K}^{(s-1)r}).\]
  We have therefore found~$X$ such that~\eqref{eq:conjugation-c1} and~\eqref{eq:conjugation-c2} hold, and this concludes the proof of the proposition.
\end{proof}

\subsection{The computation of the twisted polynomial}
\label{sub:TwistedPolynomial}

In this subsection, we compute the twisted Alexander polynomial of the~$0$-framed surgery~$M_{T(p,q)}$ with respect to~$\alpha(p,\chi)$.
\medbreak
Recall that given a space~$X$ and a representation~$\rho \colon \pi_1(X) \to GL_p(\LC)$, the \emph{twisted Alexander polynomia}l~$\Delta_{1}^{\rho}(X)$ is defined as the order of the twisted Alexander module~$H_{1}(X; \C[t^{\pm1}]^{p}_{\rho})$.
More generally, we write~$\Delta_{i}^{\rho}(X)$ for the order of the~$\LC$-module~$H_{i}(X; \C[t^{\pm1}]^{p}_{\rho})$.
Recall that the~$\Delta_{i}^{\rho}(X)$ are defined up to multiplication by units of~$\LC$.

The next proposition describes~$\Delta_{1}^{\alpha(p,\chi)}(E_{T(p,q)})$, where~$E_{T(p,q)}$ denotes the exterior of~$T(p,q)$.

\begin{proposition}
  \label{prop:TwistedAlexanderPolynomial}
  Let~$p,q>0$ be coprime integers.
  For~$\chi=\chi_{\mathbf{a}} \colon H_1(\Sigma_p(T(p,q))) \to~GL_p(\LC)$, the metabelian twisted Alexander polynomial of~$E_{T(p,q)}$ is given by
  ~$$\Delta_{1}^{\alpha(p,\chi)}(E_{T(p,q)})= \frac{(1-t^{q})^{p-1}}{(t\xi_q^{a_{1}}-1)(t\xi_q^{a_{2}}-1) \cdots (t\xi_q^{a_{p}}-1)}.~$$  
\end{proposition}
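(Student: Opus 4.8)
The plan is to compute $\Delta_1^{\alpha(p,\chi)}(E_{T(p,q)})$ directly from the presentation of $\pi_1(E_{T(p,q)}) = \langle c_1, c_2 \mid c_1^p c_2^{-q}\rangle$ supplied by Proposition~\ref{prop:complement-torus-knot}, together with the explicit matrices for $\alpha'(p,\chi)(c_1)$ and $\alpha'(p,\chi)(c_2)$ furnished by Proposition~\ref{prop:alphac1c2}. Since $E_{T(p,q)}$ is aspherical with a two-generator one-relator presentation, it has the homotopy type of a presentation $2$-complex, so the twisted chain complex is the standard one: $0 \to \LC^p \xrightarrow{\partial_2} \LC^p \oplus \LC^p \xrightarrow{\partial_1} \LC^p \to 0$, where $\partial_2$ is given by the Fox derivatives $\Phi\!\left(\frac{\partial r}{\partial c_1}\right)$ and $\Phi\!\left(\frac{\partial r}{\partial c_2}\right)$ of the relator $r = c_1^p c_2^{-q}$ under $\Phi = \alpha'(p,\chi)$, and $\partial_1$ is $(\,\alpha'(p,\chi)(c_1) - I \ \ \alpha'(p,\chi)(c_2) - I\,)^T$. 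By the standard machinery for twisted Alexander polynomials of knot exteriors (Wada's invariant / the Friedl–Vidussi formulation), $\Delta_1^{\alpha(p,\chi)}(E_{T(p,q)})$ equals, up to units of $\LC$, the quotient $\det\!\left(\Phi\!\left(\frac{\partial r}{\partial c_j}\right)\right) \big/ \det\!\left(\alpha'(p,\chi)(c_i) - I\right)$ for a suitable choice of indices $i \neq j$, provided the relevant denominator is nonzero.

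First I would record the Fox derivatives of $r = c_1^p c_2^{-q}$: we have $\frac{\partial r}{\partial c_1} = 1 + c_1 + \cdots + c_1^{p-1}$ and $\frac{\partial r}{\partial c_2} = -c_1^p(c_2^{-1} + c_2^{-2} + \cdots + c_2^{-q})$. Applying $\Phi = \alpha'(p,\chi)$ and using $\Phi(c_1) = A_p(t)^q$, $\Phi(c_2) = t\cdot D$ with $D = \diag(\xi_q^{a_1}, \ldots, \xi_q^{a_p})$ and $\Phi(c_1^p) = \Phi(c_2^q) = t\cdot I$, I would compute the two candidate numerator determinants and the two denominator determinants $\det(\Phi(c_1) - I)$ and $\det(\Phi(c_2) - I)$. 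The determinant $\det(\Phi(c_2) - I) = \prod_{j=1}^p (t\xi_q^{a_j} - 1)$ is immediate and is visibly the denominator appearing in the statement, so the natural route is to take $i = 2$ in the denominator, and then to compute $\det\Phi\!\left(\frac{\partial r}{\partial c_1}\right) = \det\!\left(I + A_p(t)^q + \cdots + A_p(t)^{q(p-1)}\right)$ as the numerator. One checks that the eigenvalues of $A_p(t)$ are the $p$-th roots of $t$, namely $\zeta \cdot t^{1/p}$ for $\zeta^p = 1$; since $\gcd(p,q) = 1$, the map $\zeta \mapsto \zeta^q$ permutes the $p$-th roots of unity, so the eigenvalues of $A_p(t)^q$ are again exactly all the $p$-th roots of $t$. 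For an eigenvalue $\omega$ with $\omega^p = t$, the corresponding factor of $\det(I + A_p(t)^q + \cdots + A_p(t)^{q(p-1)})$ is $\frac{\omega^p - 1}{\omega - 1} = \frac{t-1}{\omega - 1}$ when $\omega \neq 1$ (the $\omega = 1$ case, i.e. $t = 1$, being handled separately or absorbed into the "up to units" caveat), giving $\prod_{\omega^p = t} \frac{t-1}{\omega-1} = \frac{(t-1)^p}{\prod_{\omega^p = t}(\omega - 1)} = \frac{(t-1)^p}{(-1)^p(1 - t)} = (-1)^{p-1}(t-1)^{p-1}$, which equals $(1 - t^q)^{p-1}$ up to a unit only after the right comparison — so I would instead more carefully track that the relevant combination is $(1-t^q)^{p-1}$, perhaps by grouping $A_p(t)^q$ directly rather than passing through $A_p(t)$, since $A_p(t)^q$ is itself a (generalized) companion-type matrix whose characteristic polynomial is $t^q - X^p$ up to sign when $\gcd(p,q)=1$. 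Then $\Delta_1^{\alpha(p,\chi)}(E_{T(p,q)}) = \det\Phi\!\left(\frac{\partial r}{\partial c_1}\right) \big/ \det(\Phi(c_2) - I) = (1-t^q)^{p-1} \big/ \prod_{j=1}^p(t\xi_q^{a_j} - 1)$, as claimed, up to a unit of $\LC$ (which is harmless, since $\Delta_1$ is only defined up to units).

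The step I expect to be the main obstacle is the bookkeeping to justify which Wada-quotient formula applies and to pin down the numerator determinant $\det(I + A_p(t)^q + \cdots + A_p(t)^{q(p-1)})$ as exactly $(1-t^q)^{p-1}$ rather than merely $\pm(t-1)^{p-1}$: one must use $\gcd(p,q) = 1$ in an essential way (so that $X \mapsto X^q$ permutes $p$-th roots, and so that $t^q$ rather than $t$ appears), and one must confirm that $\det(\Phi(c_2) - I) = \prod_j(t\xi_q^{a_j}-1)$ is not identically zero so that the quotient formula $\Delta_1 = \Delta_0^{-1} \cdot \det\partial_2$-type identity is valid over the field of fractions $\C(t)$. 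A clean way to organize this is to note that $\Phi(c_1)^p = \Phi(c_2)^q$ forces a relation making $\LC^p$ with the $c_1$-action a module over $\LC[X]/(X^p - t^q)$ (on which $X$ acts as $\Phi(c_1)$), diagonalize over the splitting field, and read off both determinants simultaneously from the eigenvalue list $\{\,\eta : \eta^p = t^q\,\} = \{\,\xi_p^k t^{q/p}\,\}$; the factor $1 + \eta + \cdots + \eta^{p-1} = \frac{\eta^p - 1}{\eta - 1} = \frac{t^q - 1}{\eta - 1}$ then gives numerator $\prod_\eta \frac{t^q-1}{\eta-1} = \frac{(t^q-1)^p}{\prod_\eta(\eta-1)} = \frac{(t^q-1)^p}{(-1)^p(1 - t^q)} = (-1)^{p-1}(t^q-1)^{p-1}$, which is $(1-t^q)^{p-1}$ up to sign, completing the computation. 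The passage from the exterior $E_{T(p,q)}$ in this proposition to the $0$-framed surgery $M_{T(p,q)}$ in Proposition~\ref{prop:TwistedPolyIntro} — which introduces the extra factor $\frac{(-1)^{p-1}}{t-1}$ from the surgery solid torus via a Mayer–Vietoris / half-lives-half-dies argument — I would defer to the subsequent corollary rather than handle here.
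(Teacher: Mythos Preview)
Your approach is essentially the same as the paper's: both compute the Wada/Kitano quotient from the two-generator one-relator presentation, take $\det(\alpha'(p,\chi)(c_2)-I)=\prod_j(t\xi_q^{a_j}-1)$ as denominator, and evaluate the numerator $\det\bigl(I+A_p(t)^q+\cdots+A_p(t)^{q(p-1)}\bigr)$ by diagonalizing $A_p(t)$ over $\C[t^{\pm 1/p}]$ to obtain $(1-t^q)^{p-1}$. Your mid-paragraph misstatement that the eigenvalues of $A_p(t)^q$ are the $p$-th roots of $t$ (they are the $p$-th roots of $t^q$, as you use correctly in the final paragraph) is the source of the spurious $(t-1)^{p-1}$ detour, and note that your final answer $(-1)^{p-1}(t^q-1)^{p-1}$ is in fact exactly $(1-t^q)^{p-1}$, not merely up to sign.
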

\begin{proof}
  We use~$\tau^{\alpha(p,\chi)}(E_{K})$ to denote the Reidemeister torsion of a knot exterior~$E_{K}$ twisted by~$\alpha(p,\chi):=\alpha_K(p,\chi)$.
  We refer to~\cite{FriedlVidussiSurvey} for further references on the subject, but simply note that~$\tau^{\alpha(p,\chi)}(E_{K})$ is defined since the chain complex~$C_*(E_{K};\C(t)^p)$ of left~$\C(t)$-modules is acyclic~\cite[Corollary after Lemma~4]{CassonGordon1}. 
  Since~$E_{K}$ has torus boundary, by \cite[Proposition 2, item 5]{FriedlVidussiSurvey}, the twisted Reidemeister torsion and twisted Alexander polynomial are related~by
  ~$$\tau^{\alpha(p,\chi)}(E_{K})=\frac{\Delta_1^{\alpha(p,\chi)}(E_{K})}{\Delta_0^{\alpha(p,\chi)}(E_{K})}.$$ 
  Since~$\Delta_{0}^{\alpha(p,\chi)}(E_{K})= 1$ for every knot~$K$~\cite[Lemma 8.1]{BorodzikConwayPolitarczyk}, we are reduced to computing~$\tau^{\alpha(p,\chi)}(E_{T(p,q)}).$
    By~\cite[Theorem A]{Kitano}, this torsion invariant can be expressed via Fox calculus.
  In our case, using the presentation of~$\pi_1(E_{T(p,q)})$ resulting from Proposition~\ref{prop:complement-torus-knot}, we obtain
  % (or  following the proof of~\cite[Lemma~5.2]{Hedden-Kirk-Livingston}), we obtain
  \begin{equation}
    \label{eq:AlexanderFox}
    \Delta_{1}^{\alpha(p,\chi)}(E_{T(p,q)}) =\tau^{\alpha(p,\chi)}(E_{T(p,q)})= \frac{\det\left(\alpha(p,\chi)\left( \frac{\partial (c_{1}^{p}c_{2}^{-q})}{\partial c_{1}} \right)\right)}{\det\left( \alpha(p,\chi)(c_{2})-\id) \right)}.
  \end{equation}
  Since this expression does not depend on the conjugacy class of~$\alpha(p,\chi)$, we can work with the representation~$\alpha'(p,\chi)$ described in Proposition~\ref{prop:alphac1c2}.
  Using the first item of Proposition~\ref{prop:alphac1c2}, the denominator of~\eqref{eq:AlexanderFox} is given by the formula
  \begin{equation}
    \label{eq:Denominator}
    \det(\alpha(p,\chi)(c_{2})-\id) = \det (\diag(t \xi_q^{a_{1}}-1,t \xi_q^{a_{2}}-1,\ldots,t \xi_q^{a_{p}}-1)) = \prod_{i=1}^{p}(t\xi_q^{a_{i}}-1).
  \end{equation}
  We will now compute the numerator of~\eqref{eq:AlexanderFox} and show that it equals~$(1-t^q)^{p-1}$.
  Recall from~\eqref{eq:Matrix} that for~$g \in \pi_1(E_K)$, the metabelian representation~$\alpha_K(p,\chi)$ is given by~$\alpha_K(p,\chi)(g)=A_p(t)^{\phi_K(g)}D_g$, where~$D_g$ is the diagonal matrix with~$\xi_q^{\chi(t_K^{i-1} \cdot q_K(\mu_{K}^{-\phi_K(g)}g))}$ as its~$i$-th diagonal component.
  An inductive argument involving the properties of the Fox derivative shows that
  \begin{align*}
    \frac{\partial (c_{1}^{p}c_{2}^{-q})}{\partial c_{1}} &= \frac{\partial c_{1}^{p}}{\partial c_{1}} = 1+c_{1}+c_{1}^{2}+\cdots+c_{1}^{p-1}=:g.
  \end{align*}
  We will now apply~$\alpha(p,\chi)$ to~$g$.
  We recall from Proposition~\ref{prop:alphac1c2} that~$\alpha'(p,\chi)(c_1)=A_p(t)^q$, and we now work over~$\C[t^{\pm 1/p}]$.
  Indeed, as observed in~\cite[page 935]{HeraldKirkLivingston}, in this ring, the matrix~\(A_{p}(t)\) is conjugated to the diagonal matrix
  \[B_{p}(t) := \diag(t^{1/p},\xi_{p}t^{1/p},\xi_{p}^{2}t^{1/p},\ldots,\xi_{p}^{p-1}t^{1/p}).\]
  % Furthermore, observe that for  \(0 \leq j \leq p-1\), we have
  % \[B_{q}(t)^{jq} = \diag(t^{jq/p} \xi_{p}^{qj}t^{qj/p},\xi_{p}^{2qj}t^{qj/p},\ldots,\xi_{p}^{q(p-1)j}t^{qj/p}).\]
  Since~\eqref{eq:AlexanderFox} only depends on the conjugacy class of the representation~$\alpha(p,\chi)$, we can work with~$B_p(t)$ instead of~$A_p(t)$.
  We use~$\sim$ to denote the conjugacy relation.
  Since~$B_p(t)$ is diagonal, its powers are easy to compute, and as a consequence, we obtain
  \begin{align*}
    \alpha'(p,\chi)(g) 
    % &= \id + \alpha(c_1) + \alpha(c_1)^2 + \cdots +\alpha(c_1)^{p-1} \\  
    % &= \id + A_p(t)^q + A_p(t)^{2q} + \cdots + A_p(t)^{(p-1)q} \\  
      &\sim \id + B_{p}(t)^{q} + B_{p}(t)^{2q} + \cdots + B_{p}(t)^{(p-1)q} \\
      &= \diag\left( \frac{1-t^{q}}{1-t^{q/p}}, \frac{1-t^{q}}{1-\xi_{p}^{q}t^{q/p}}, \frac{1-t^{q}}{1-\xi_{p}^{2q}t^{q/p}}, \ldots, \frac{1-t^{q}}{1-\xi_{p}^{q(p-1)}t^{q/p}} \right).
  \end{align*}
  Taking the determinant of this expression, we deduce that
  \begin{align}
    \label{eq:Numerator}
    \det \left(\alpha(p,\chi)\left( \frac{\partial (c_{1}^{p}c_{2}^{-q})}{\partial c_{1}} \right)\right) = \prod_{j=0}^{p-1}\frac{1-t^{q}}{1-\xi_{p}^{jq}t^{q/p}} = \frac{(1-t^{q})^{p}}{1-t^{q}} = (1-t^{q})^{p-1}.
  \end{align}
  % WP: But since \(\gcd(p,q)=1\), we have
  % \[\{\xi_{p},\xi_{p}^{2},\ldots,\xi_{p}^{p-1}\} = \{\xi_{p}^{q},\xi_{p}^{2q},\ldots,\xi_{p}^{(p-1)q}\},\]
  % hence
  % after rearranging terms in \(\prod_{j=0}^{p-1}(1-\xi_{p}^{j\textcolor{red}{q}}t^{q/p})\) we obtain
  % \[\prod_{j=0}^{p-1}(1-\xi_{p}^{j\textcolor{red}{q}}t^{q/p}) = \prod_{j=0}^{p-1}(1-\xi_{p}^{j}t^{q/p}).\]
  Plugging~\eqref{eq:Denominator} and~\eqref{eq:Numerator} into~\eqref{eq:AlexanderFox} concludes the proof of the proposition.
\end{proof}

Using Proposition~\ref{prop:TwistedAlexanderPolynomial}, we can compute the twisted polynomial of the~$0$-framed surgery~$M_{T(p,q)}$.

\begin{corollary}
  \label{cor:TwistedAlexanderPolynomial}
  Let~$p,q>0$ be two coprime integers.
  For~$\chi=\chi_{\mathbf{a}} \colon H_1(\Sigma_p(T(p,q))) \to GL_p(\LC)$, the metabelian twisted Alexander polynomial of~$M_{T(p,q)}$ is given by
  ~$$\Delta_{1}^{\alpha(p,\chi)}(M_{T(p,q)}))= \frac{(-1)^{p-1}(1-t^{q})^{p-1}}{(t\xi_q^{a_{1}}-1)(t\xi_q^{a_{2}}-1) \cdots (t\xi_q^{a_{p}}-1)(t-1)}.~$$
\end{corollary}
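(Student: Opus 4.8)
The plan is to pass from the knot exterior $E_{T(p,q)}$ to the $0$-framed surgery $M_{T(p,q)}$ via a Mayer--Vietoris argument, exactly as one relates the ordinary Alexander polynomial of a knot to that of its $0$-surgery. Recall that $M_{T(p,q)}$ is obtained from $E_{T(p,q)}$ by gluing a solid torus $S^1 \times D^2$ along $\partial E_{T(p,q)}$, meeting it in the torus $T^2 = S^1 \times S^1$; here the meridian of the glued-in solid torus is sent to the $0$-framed longitude $\lambda$ of $T(p,q)$. Decomposing $M_{T(p,q)} = E_{T(p,q)} \cup_{T^2} (S^1 \times D^2)$ and twisting everything by $\alpha(p,\chi)$ yields a long exact Mayer--Vietoris sequence in $\C[t^{\pm 1}]$-homology
\[
\cdots \to H_i(T^2;\LC^p) \to H_i(E_{T(p,q)};\LC^p) \oplus H_i(S^1\times D^2;\LC^p) \to H_i(M_{T(p,q)};\LC^p) \to \cdots.
\]

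First I would compute the twisted homology of the two simple pieces. Since $\alpha(p,\chi)$ is metabelian it vanishes on $\lambda$, so the restriction of the representation to $\pi_1(S^1 \times D^2) = \langle \lambda \rangle$ (where $S^1$ is the longitude direction) is trivial; hence $H_*(S^1\times D^2;\LC^p) \cong H_*(S^1;\C)\otimes \LC^p$, which is $\LC^p$ in degrees $0$ and $1$ and $0$ otherwise. For the torus $T^2 = \partial E_{T(p,q)}$, the relevant $\pi_1$ is generated by a meridian $\mu$ and the longitude $\lambda$; the longitude acts trivially and $\mu$ acts by an invertible matrix $P := \alpha(p,\chi)(\mu)$ over $\C(t)$, with $P - \id$ invertible over $\C(t)$ (this is why $C_*(E_{T(p,q)};\C(t)^p)$ is acyclic). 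Computing $H_*(T^2;\C(t)^p)$ with these actions gives acyclicity over $\C(t)$; over $\LC$ one tracks the torsion, and $H_*(T^2;\LC^p)$ is controlled by $\det(tP - \id)$-type factors. The key input is $\Delta_0^{\alpha(p,\chi)}(E_{T(p,q)}) = 1$, already cited from \cite[Lemma 8.1]{BorodzikConwayPolitarczyk}, together with Proposition~\ref{prop:TwistedAlexanderPolynomial}.

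The heart of the computation is to extract $\Delta_1^{\alpha(p,\chi)}(M_{T(p,q)})$ from the Mayer--Vietoris sequence by taking alternating products of orders (orders are multiplicative in short exact sequences of torsion $\LC$-modules). A cleaner route, which I would actually prefer, is to observe that $M_{T(p,q)}$ and $E_{T(p,q)}$ have multiplicative Reidemeister torsion under this gluing: $\tau^{\alpha(p,\chi)}(M_{T(p,q)}) = \tau^{\alpha(p,\chi)}(E_{T(p,q)}) \cdot \tau^{\alpha(p,\chi)}(S^1\times D^2) \cdot \tau^{\alpha(p,\chi)}(T^2)^{-1}$, where the torsion of $S^1 \times D^2$ with the trivial (on $\lambda$) action contributes $(t-1)^{-p}$-type data but, crucially, since the longitude $\lambda$ is what bounds and $\alpha$ is trivial there, the net contribution of the solid-torus-versus-torus gluing is $\det(\alpha(p,\chi)(\lambda) - \id)^{-1}$ divided out against the $S^1$-factor — and because $\alpha(p,\chi)(\lambda) = \id$, one must instead use the meridian-based normalization, yielding an extra factor of $\det(\alpha(p,\chi)(\mu^0)\cdots)$ that collapses to $(t-1)$. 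Concretely, the standard formula (see e.g. the surgery formula for twisted torsion) gives $\tau^{\alpha(p,\chi)}(M_{T(p,q)}) = \tau^{\alpha(p,\chi)}(E_{T(p,q)})/\det(\alpha(p,\chi)(\mu) - \id)\big|_{t \text{ handled appropriately}}$, and one identifies $\det(\alpha(p,\chi)(\mu) - \id)$ up to units with $(t-1)$ (the meridian acts as $A_p(t)$ times the identity diagonal part, and $\det(A_p(t) - \id) = (-1)^{p-1}(t-1)$). Then $H_2(M_{T(p,q)};\LC^p)$ and the degree-$0$ part being trivial (order $1$), and $\tau = \Delta_1/(\Delta_0 \Delta_2)$ with $\Delta_0 = \Delta_2 = 1$, forces
\[
\Delta_1^{\alpha(p,\chi)}(M_{T(p,q)}) \doteq \frac{\Delta_1^{\alpha(p,\chi)}(E_{T(p,q)})}{(t-1)} = \frac{(1-t^q)^{p-1}}{(t\xi_q^{a_1}-1)\cdots(t\xi_q^{a_p}-1)(t-1)},
\]
and pinning down the sign $(-1)^{p-1}$ comes from $\det(A_p(t)-\id) = (-1)^{p-1}(t-1)$.

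\textbf{Main obstacle.} The delicate point is bookkeeping the solid-torus/torus gluing contribution correctly — in particular getting the single factor of $(t-1)$ in the denominator and the sign $(-1)^{p-1}$ right, since $\alpha(p,\chi)$ kills the longitude but not the meridian, so the naive "trivial coefficients on $S^1$" computation must be reconciled with the fact that it is $\lambda$ (not $\mu$) that bounds in the surgery torus. I expect this to require either a careful direct Mayer--Vietoris chain-level computation or an appeal to a precisely-stated twisted surgery formula for Reidemeister torsion; everything else (the homology of $S^1 \times D^2$ and $T^2$, multiplicativity of orders, invoking $\Delta_0 = 1$ and Proposition~\ref{prop:TwistedAlexanderPolynomial}) is routine.
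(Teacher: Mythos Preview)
Your proposal is essentially correct and, once you settle on the ``cleaner route'' via Reidemeister torsion, it coincides with the paper's argument: the paper simply cites the surgery formula $\tau^{\alpha(p,\chi)}(E_K)=\det(\alpha(p,\chi)(\mu_K)-\id)\cdot\tau^{\alpha(p,\chi)}(M_K)$ from \cite[Lemma~3]{FriedlVidussiSurvey} rather than deriving it from gluing multiplicativity, then computes $\det(A_p(t)-\id)=(-1)^{p-1}(t-1)$ exactly as you do. Your initial Mayer--Vietoris discussion is therefore unnecessary (and, as you note yourself, gets tangled precisely because $\alpha(p,\chi)(\lambda)=\id$ makes the solid-torus piece non-acyclic); you should discard it and go straight to the torsion surgery formula.

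One genuine gap: you assert $\Delta_2^{\alpha(p,\chi)}(M_K)=1$ without justification. This is not automatic --- $M_K$ is closed, not a knot exterior, so $H_2$ with twisted coefficients need not be trivially controlled. The paper handles this via Poincar\'e duality for the closed $3$-manifold $M_K$, which gives $\Delta_2^{\alpha(p,\chi)}(M_K)=\overline{\Delta_0^{\alpha(p,\chi)}(M_K)}$ (see \cite[Proposition~5]{FriedlVidussiSurvey}), and then invokes $\Delta_0=1$ from \cite[Lemma~8.1]{BorodzikConwayPolitarczyk}. You should insert this step.
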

\begin{proof}
  By Proposition~\ref{prop:TwistedAlexanderPolynomial}, we need only show that~$(-1)^{p-1}(t-1)\Delta_{1}^{\alpha(p,\chi)}(M_{K})=\Delta_{1}^{\alpha(p,\chi)}(E_{K})$ for every knot~$K$, where~$\alpha(p,\chi):=\alpha_K(p,\chi)$.
  Using the equality~$\Delta_{1}^{\alpha(p,\chi)}(E_{K})
  =\tau^{\alpha(p,\chi)}(E_{K})$ that was obtained 
  % It holds for every knot, not just~$T(p,q)$
  in the proof of Proposition~\ref{prop:TwistedAlexanderPolynomial},~\cite[Lemma~3]{FriedlVidussiSurvey}, as well as~\cite[Proposition 2, item~(8)]{FriedlVidussiSurvey},~\cite[Proposition 5]{FriedlVidussiSurvey}, and the fact that~$\Delta_{0}^{\alpha(p,\chi)}(M_{K})=1$ (by~\cite[Lemma~8.1]{BorodzikConwayPolitarczyk}), we obtain the following sequence of equalities:
  \begin{align*}
    \Delta_{1}^{\alpha(p,\chi)}(E_{K})
    &=\tau^{\alpha(p,\chi)}(E_{K})
      =\det(\alpha(p,\chi)(\mu_K)-\id)\tau^{\alpha(p,\chi)}(M_{K}) \\
    &=\det(\alpha(p,\chi)(\mu_K)-\id)\frac{\Delta_{1}^{\alpha(p,\chi)}(M_{K})}{\Delta_{0}^{\alpha(p,\chi)}(M_{K})\Delta_{2}^{\alpha(p,\chi)}(M_{K})}\\
    &=\det(\alpha(p,\chi)(\mu_K)-\id)\frac{\Delta_{1}^{\alpha(p,\chi)}(M_{K})}{\Delta_{0}^{\alpha(p,\chi)}(M_{K})\overline{\Delta_{0}^{\alpha(p,\chi)}(M_{K})}} \\
    &=\det(\alpha(p,\chi)(\mu_K)-\id)\Delta_{1}^{\alpha(p,\chi)}(M_{K}).
  \end{align*}
  It thus remains to show that~$\det(\alpha(p,\chi)(\mu_K)-\id)=(-1)^{p-1}(t-1)$: this follows from the definition of~$\alpha(p,\chi)$ (recall~\eqref{eq:Matrix}) since~$\alpha(p,\chi)(\mu_K)=A_p(t)$.
  This concludes the proof of the proposition.
\end{proof}

\section{Linking forms and their metabolisers}
\label{sec:Metabolisers}

This section collects some facts about linking forms and their metabolizers.
This will be useful in Section~\ref{sec:MainTheorem} since both the metabelian Blanchfield pairing and $\lambda_p(T(p,q))$ are linking forms.
In Subsection~\ref{sub:LinkingForms}, we recall some basics on linking forms and their Witt groups.
In Subsection~\ref{sub:Graph}, we prove a result on metabolisers of linking forms of the type~$(V_1 \oplus V_2,\lambda_1 \oplus -\lambda_2)$.

\subsection{The Witt group of linking forms}
\label{sub:LinkingForms}
Let~$R$ be a PID with involution, and let~$Q$ denote its field of fractions.
This subsection is concerned with linking forms.
Firstly, we recall the definition of the Witt group~$W(Q,R)$ of linking forms.
Secondly, we collect some facts about~$W(\C(t),\LC)$ that are used in Section~\ref{sec:MainTheorem} below.
\medbreak

A \emph{linking form} over~$R$ is a pair~$(V,\lambda)$, where~$V$ is a torsion~$R$-module, and~$\lambda \colon V \times V \to~Q/R$ is a sesquilinear and Hermitian pairing.
A linking form~$(V,\lambda)$ is \emph{non-singular} if its \emph{adjoint}~$\lambda^\bullet \colon V \to~V^*, \linebreak x \mapsto \lambda(x,-)$ is an isomorphism.
In the sequel, our linking forms will be either over~$\Z$ or~$\C[t^{\pm 1}]$.
From now on, we also assume that all linking forms are non-singular.
Given a linking form~$(V,\lambda)$ over~$R$, a submodule~$L \subset V$ is \emph{isotropic} if~$L \subset L^\perp$ and is a \emph{metaboliser} if~$L=L^\perp$. 
A linking form is \emph{metabolic} if it admits a metabolizer.

\begin{definition}
  \label{def:metabolic_and_so_on}
  The \emph{Witt group of linking forms}, denoted~$W(Q,R)$, consists of the monoid of linking forms modulo the submonoid of metabolic linking forms.
  Two linking forms~$(V,\lambda)$ and~$(V',\lambda')$ are called \emph{Witt equivalent} if they represent the same element in~$W(Q,R)$.
\end{definition}

The Witt group of linking forms is known to be an abelian group under direct sum, where the inverse of the class~$[(V,\lambda)]$ is represented by~$(V,-\lambda)$.
Next, we collect some facts on~$W(\C(t),\LC)$ that will be used in Section~\ref{sec:MainTheorem} below.

\begin{remark}
  \label{rem:JumpsRoots}
  The Witt group~$W(\C(t),\C[t^{\pm 1}])$ is known to be free abelian and is detected by the signature jumps~$\delta \sigma_{(V,\lambda)}$~\cite[Sections 4 and~5]{ BorodzikConwayPolitarczyk}.
  In particular, a linking form~$(V,\lambda)$ over~$\LC$ is metabolic if and only if all its signature jumps vanish~\cite[Theorem~5.3]{ BorodzikConwayPolitarczyk}.
  Reformulating,~$[V,\lambda]=0$ in~$W(\C(t),\C[t^{\pm 1}])$ if and only if~$\delta \sigma_{(V,\lambda)}(\omega)=0$ for all~$\omega \in S^1$.
  We refer to~\cite[Sections 4 and~5]{ BorodzikConwayPolitarczyk} for further details regarding signatures of linking forms but note that a linking form~$(V,\lambda)$ will have a trivial jump at~$\omega \in S^1$ if the order~$\operatorname{Ord}(T)$ of the~$\LC$-module~$T$ does not have a root at~$\omega$.
\end{remark}

In particular, Remark~\ref{rem:JumpsRoots} implies the following result about linear independence in~$W(\C(t),\LC)$.

\begin{proposition}
  \label{prop:Splitting}
  If~$(V_1,\lambda_1)$ and~$(V_2,\lambda_2)$ are two linking forms over~$\LC$ such that~$\operatorname{Ord}(V_1)$ and~$\operatorname{Ord}(V_2)$ have distinct roots, then the following assertions hold:
  \begin{enumerate}
  \item if~$(V_1,\lambda_1)$ and~$(V_2,\lambda_2)$ are not metabolic, then the Witt classes~$[V_1,\lambda_1]$ and~$[V_2,\lambda_2]$ are linearly independent in \(W(\C(t),\C[t^{\pm 1}])\);
  \item if~$(V_1,\lambda_1) \oplus (V_2,\lambda_2)$ is metabolic, then~$(V_1,\lambda_1)$ and~$(V_2,\lambda_2)$ are both metabolic.
  \end{enumerate}
\end{proposition}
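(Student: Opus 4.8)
The plan is to reduce both assertions to the fact, recalled in Remark~\ref{rem:JumpsRoots}, that the Witt class of a linking form over~$\LC$ is detected by its signature jump function~$\delta\sigma \colon S^1 \to \Z$, and that this function is supported on the roots of the order of the underlying module. The key observation is that for a linking form $(V,\lambda)$, the support of $\delta\sigma_{(V,\lambda)}$ is contained in the zero set of $\operatorname{Ord}(V)$; this is the last sentence of Remark~\ref{rem:JumpsRoots}. The hypothesis that $\operatorname{Ord}(V_1)$ and $\operatorname{Ord}(V_2)$ have distinct roots then means that $\delta\sigma_{(V_1,\lambda_1)}$ and $\delta\sigma_{(V_2,\lambda_2)}$ have disjoint supports in $S^1$.

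First I would record that $\delta\sigma$ is additive under direct sums, so that $\delta\sigma_{(V_1,\lambda_1)\oplus(V_2,\lambda_2)} = \delta\sigma_{(V_1,\lambda_1)} + \delta\sigma_{(V_2,\lambda_2)}$, and that since the supports are disjoint this sum vanishes identically if and only if each summand vanishes identically. Combined with the criterion ``$[V,\lambda]=0$ in $W(\C(t),\LC)$ iff $\delta\sigma_{(V,\lambda)}\equiv 0$'', this immediately gives assertion~(2): if $(V_1,\lambda_1)\oplus(V_2,\lambda_2)$ is metabolic then its signature jump function is identically zero, hence so is each of $\delta\sigma_{(V_1,\lambda_1)}$ and $\delta\sigma_{(V_2,\lambda_2)}$ by disjointness of supports, hence each of $(V_1,\lambda_1)$, $(V_2,\lambda_2)$ is metabolic.

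For assertion~(1), suppose $a[V_1,\lambda_1] + b[V_2,\lambda_2] = 0$ in $W(\C(t),\LC)$ for integers $a,b$. Taking signature jumps and using additivity gives $a\,\delta\sigma_{(V_1,\lambda_1)} + b\,\delta\sigma_{(V_2,\lambda_2)} \equiv 0$ as functions on $S^1$. Because the two jump functions have disjoint supports, evaluating at a point $\omega$ in the support of $\delta\sigma_{(V_1,\lambda_1)}$ forces $a\,\delta\sigma_{(V_1,\lambda_1)}(\omega)=0$, and since $(V_1,\lambda_1)$ is not metabolic such an $\omega$ with $\delta\sigma_{(V_1,\lambda_1)}(\omega)\neq 0$ exists, giving $a=0$; symmetrically $b=0$. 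Hence $[V_1,\lambda_1]$ and $[V_2,\lambda_2]$ are linearly independent.

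I do not expect a serious obstacle here: the argument is essentially bookkeeping once one has the detection result for $W(\C(t),\LC)$ and the localization of signature jumps at roots of the order, both of which are quoted from~\cite{BorodzikConwayPolitarczyk} in Remark~\ref{rem:JumpsRoots}. The only point requiring a line of care is the precise meaning of ``distinct roots'' — namely that the zero sets of $\operatorname{Ord}(V_1)$ and $\operatorname{Ord}(V_2)$ in $S^1$ (or even in $\C^*$) are disjoint — and the verification that $\delta\sigma$ is genuinely additive over direct sums, which follows from the definition of the signature jump via a representing form and the behaviour of signatures under orthogonal direct sum.
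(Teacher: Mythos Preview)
Your proposal is correct and follows essentially the same approach as the paper: both arguments invoke Remark~\ref{rem:JumpsRoots} to detect Witt classes via signature jumps, use that the jump functions of~$(V_1,\lambda_1)$ and~$(V_2,\lambda_2)$ have disjoint supports, and combine this with additivity to force the coefficients to vanish. The only cosmetic difference is that the paper derives assertion~(2) as an immediate consequence of~(1), whereas you prove~(2) directly; both routes are equally short.
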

\begin{proof}
  We only prove the first assertion as the second assertion follows immediately.
  Assume that~$n_1 [V_1,\lambda_1]+n_2 [V_2,\lambda_2]=0$ for some integers~$n_1$ and~$n_2$.
  Remark~\ref{rem:JumpsRoots} implies that all the signature jumps of~$n_1 \lambda_1 \oplus n_2 \lambda_2$ must vanish.
  Since~$\lambda_1$ is not metabolic, Remark~\ref{rem:JumpsRoots} also implies that~$\lambda_1$ admits a non-trivial signature jump at some~$\omega_1 \in S^1$.
  As a consequence of these two assertions, we infer that~$n_1\lambda_1$ and~$n_2 \lambda_2$ must have a non-trivial signature jump at~$\omega_1$.
  Since~$\operatorname{Ord}(V_1)$ and~$\operatorname{Ord}(V_2)$ have distinct roots, we deduce that~$n_1=0$.
  The same reasoning shows that~$n_2=0$, thus establishing the linear independence of~$[V_1,\lambda_1]$ and~$[V_2,\lambda_2]$ and establishing the proposition.
\end{proof}

\subsection{Graph metabolisers}
\label{sub:Graph}
Given linking forms~$(V_1,\lambda_1),(V_2,\lambda)$, we prove a result on metabolisers of linking forms of the type~$(V_1 \oplus V_2,\lambda_1 \oplus -\lambda_2)$.
More precisely, Proposition~\ref{prop:DirectSummetaboliser} provides a criterion for when such a metabolizer must be a graph.
This result will be used in Section~\ref{sec:MainTheorem} when we study metabolisers of~$\lambda_p(T(p,q))^N \oplus -\lambda_p(T(p,q))^N.$
\medbreak
Given linking forms~$(V_1,\lambda_1)$ and~$(V_2,\lambda_2)$, a \emph{morphism} of linking forms is an~$R$-linear homomorphism~$f \colon V_1 \to V_2$ such that~$\lambda_2(f(x),f(y))=\lambda_1(x,y)$ for all~$x,y \in V_1$.
Observe that if the forms are non-singular, then a morphism is necessarily injective.
An \emph{isometry} of linking forms is a bijective morphism of linking forms.
The graph 
$$  \Gamma_f= \{(v,f(v)) \in V_1 \oplus V_2 \ | \ v \in V_{1}\}~$$ 
of a morphism~$f \colon (V_1,\lambda_1) \to (V_2,\lambda_2)$ is an isotropic submodule of~$(V_1 \oplus V_2,\lambda_1 \oplus -\lambda_2)$.
If~$f$ is an isometry, then~$ \Gamma_f$ is in fact a metaboliser of ~$(V_1 \oplus V_2,\lambda_1 \oplus -\lambda_2)$. 
The next proposition provides an assumption under which the converse also holds.

\begin{proposition}
  \label{prop:DirectSummetaboliser}
  Let~$(V_1,\lambda_1)$ and~$(V_2,\lambda_2)$ be linking forms over~$R$, and let~$L \subset V_1 \oplus V_2$ be a metaboliser of~$\lambda_1 \oplus -\lambda_2$. 
  The following assertions hold:  
  \begin{enumerate}
  \item  if \(L \cap (V_{1} \oplus 0) =0= L \cap (0 \oplus V_{2})\), then ~$L$ is the graph of an isometry~$f \colon V_{1} \to V_{2}$:
    \[L = \{(v,f(v)) \in V_1 \oplus V_2 \ | \ v \in V_{1}\} ;\]
  \item if we additionally work over~$R=\Z$, suppose that \(V_{1}\) and \(V_{2}\) are equipped with an isometric~$\Z_p-$action, and~\(L\) is a \(\Z_{p}\)-invariant metaboliser, then the isometry \(f\) is \(\Z_{p}\)-equivariant.
  \end{enumerate}
\end{proposition}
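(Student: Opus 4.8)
The plan is as follows. For part (1): since $L$ is a metaboliser of $\lambda_1 \oplus -\lambda_2$ and $L \cap (0 \oplus V_2) = 0$, the projection $p_1 \colon L \to V_1$ onto the first factor is injective. I would show it is also surjective. The key is a counting/duality argument: because $L = L^\perp$ and the form is non-singular, $|L|^2 = |V_1 \oplus V_2| = |V_1|\cdot|V_2|$ (over $R = \Z$ this is literal cardinality; over a general PID one argues with orders, i.e. $\mathrm{Ord}(L)\overline{\mathrm{Ord}(L)} = \mathrm{Ord}(V_1)\mathrm{Ord}(V_2)$ up to units). Symmetrically, $L \cap (V_1 \oplus 0) = 0$ gives that $p_2 \colon L \to V_2$ is injective, so $|L| \le |V_1|$ and $|L| \le |V_2|$; combined with $|L|^2 = |V_1||V_2|$ this forces $|L| = |V_1| = |V_2|$, whence $p_1$ and $p_2$ are both bijective. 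Then $f := p_2 \circ p_1^{-1} \colon V_1 \to V_2$ is an $R$-isomorphism and, by construction, $L = \Gamma_f = \{(v,f(v)) : v \in V_1\}$.

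It remains to check that $f$ is an isometry. For $(v, f(v)), (w, f(w)) \in L$, isotropy of $L$ with respect to $\lambda_1 \oplus -\lambda_2$ gives $\lambda_1(v,w) - \lambda_2(f(v), f(w)) = 0$, i.e. $\lambda_2(f(v),f(w)) = \lambda_1(v,w)$ for all $v, w \in V_1$; since $f$ is bijective this makes $f$ an isometry. This is the easy part. The main obstacle is the surjectivity step: one must be careful that over a general PID (not just $\Z$) the relation $|L|^2 = |V_1 \oplus V_2|$ is correctly formulated in terms of orders, and that injectivity of $p_2$ really does bound $\mathrm{Ord}(L)$ by $\mathrm{Ord}(V_2)$. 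A clean way around any subtlety is to note that $L^{\perp_1} := \{v \in V_1 : (v,0) \in L + (0 \oplus V_2)\}$ relates $p_1(L)$ to an annihilator computed via non-singularity of $\lambda_1$; but the cardinality argument is the most transparent, so I would present it that way, invoking non-singularity to identify $L$ with (a submodule of) its own dual.

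For part (2): suppose $R = \Z$ and $V_1, V_2$ carry isometric $\Z_p$-actions, written $t_1, t_2$, and $L$ is $\Z_p$-invariant. I want to show $f$ is $\Z_p$-equivariant, i.e. $f \circ t_1 = t_2 \circ f$. Take $v \in V_1$; then $(v, f(v)) \in L$, so by invariance $(t_1 v, t_2 f(v)) \in L$. But $L = \Gamma_f$ means the unique element of $L$ with first coordinate $t_1 v$ is $(t_1 v, f(t_1 v))$. Comparing second coordinates (using that $p_1|_L$ is injective) gives $t_2 f(v) = f(t_1 v)$, which is exactly $\Z_p$-equivariance. This part is a short diagram chase once part (1) is in hand; no real obstacle here.
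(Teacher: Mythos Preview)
Your proposal is correct and follows essentially the same route as the paper: define $f$ via the two projections $\pr_i|_L$, use an order/cardinality argument (the paper phrases it as $\ord(V_1)\ord(V_2)=\ord(L)^2=\ord(W_1)\ord(W_2)$ with $W_i=\pr_i(L)$, then derives $W_i=V_i$ by contradiction) to get surjectivity, check the isometry condition from isotropy of $L$, and deduce $\Z_p$-equivariance by exactly the graph argument you give. Your inequality version $|L|\le |V_i|$ together with $|L|^2=|V_1||V_2|$ is a slightly cleaner packaging of the same order computation, and your remark that over a general PID one should read this in terms of $\mathrm{Ord}$ rather than cardinality is precisely what the paper does.
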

\begin{proof}
  We prove the first assertion.
  The isometry~$f$ will be defined by using the canonical projections \(\pr_{i} \colon V_1 \oplus V_2 \to V_{i}\)  for \(i=1,2\).
  Since \(L \cap (V_{1} \oplus 0) =0= L \cap (0 \oplus V_{2})\), it follows that~\(\pr_{i}|_{{L}}\) is injective, for \(i=1,2\).
  Set \(W_{i} := \pr_{i}(L)\), for \(i=1,2\), and define~$f$ as the composition
  \[f \colon W_{1} \xrightarrow{\pr_{1}^{-1},\cong} L \xrightarrow{\pr_{2},\cong} W_{2}.\]
  Since~$f$ is an isomorphism of~$R$-modules, it remains to check that it is a morphism of linking forms. 
  First however, we use the definition of~$f$ to observe that
  \begin{equation}
    \label{eq:NearlyL}
    L = \{(v,f(v)) \in V_1 \oplus V_2 \ | \ v \in W_{1}\} \subset V_1 \oplus V_2.
  \end{equation}
  The fact that~$f$ is a morphism now follows from the fact that~$L$ is isotropic: for any~$v,w \in~W_1$, the pairs~$(v,f(v)), (w,f(w))$ belong to~$L$, and therefore we have
  \[0 = (\lambda_{1} \oplus -\lambda_{2})((v,f(v)),(w,f(w))) = \lambda_{1}(v,w) - \lambda_{2}(f(v),f(w)).\]
  Looking at~\eqref{eq:NearlyL}, it only remains to show that~$V_1=W_1$ and~$V_2=W_2$.
  Since~$f$ is an isomorphism, we have  \(\ord(W_{1}) = \ord(W_{2})\) and therefore~\eqref{eq:NearlyL} implies that~$ \ord(L)^{2} = \ord(W_{1}) \ord(W_{2})$.
  Since~\(L\) is a metaboliser, we deduce that
  \begin{equation}
    \label{eq:Proportional}
    \ord(V_{1}) \ord(V_{2}) = \ord(L)^{2} = \ord(W_{1}) \ord(W_{2}).
  \end{equation}
  By way of contradiction, assume that~\(\ord(W_{1}) \) divides \( \ord(V_{1})\), but that \(\ord(W_{1}) \neq \ord(V_{1})\); we write \(\ord(W_{1}) \nmid \ord(V_{1})\).
  A glance at~\eqref{eq:Proportional} shows that~\(\ord(V_{2})~\nmid~\ord(W_{2})\), contradicting the inclusion~$W_2 \subset V_2$. 
  We conclude that \(\ord(W_{i}) = \ord(V_{i})\) and consequently~\(W_{i} =~V_{i}\), for \(i=1,2\).
  This concludes the proof of the first assertion.

  We prove the second assertion.
  Use~$t$ to denote a generator of~$\Z_p$. 
  As the metaboliser~$L$ is~$\Z_p$-invariant, observe that if~\((v,f(v)) \in L\), then~\((tv,tf(v)) \in L\) for any~$v \in V_1$.
  Moreover, as~\((tv,f(tv)) \in L\) and \( L \cap (0 \oplus V_{2})=0\), it follows that~\((tv,f(tv) )= (tv,tf(v))\).
  We have therefore established that \(f(tv)=tf(v)\) for any~$v \in V_1$, and thus~$f$ is~$\Z_p$-equivariant, as desired.
  This concludes the proof of the proposition.
\end{proof}

\section{Non-slice linear combinations of iterated torus knots }
\label{sec:MainTheorem}

This section aims to prove Theorem~\ref{thm:Main} from the introduction, whose statement we now recall.
For an integer \(p \geq 2\) and a sequence \(Q = (q_{1},q_{2},\ldots,q_{\ell})\) of integers that are relatively prime to~$p$, we use the following notation for iterated torus knots:~$T(p,Q):= T(p,q_{1};p,q_{2};\ldots;p,q_{\ell}).$
      %       Let \(\mathcal{P}(p) \subset \Z_{+}\) be the set of primes \(q\) satisfying \(q \equiv 1 \mod p\).
Our main result reads as follows.

\begin{theorem}
  \label{thm:LinIndep}
  Fix a prime power~$p$. 
  Let \(\mathcal{S}_{p}\) be the set of iterated torus knots \(T(p,q_{1};p,q_{2};\ldots;p,q_{\ell})\), where the sequences~$(q_{1},q_{2},\ldots,q_{\ell})$ of positive integers that are coprime to~$p$ satisfy
  \begin{enumerate}
  \item $q_\ell$ is a prime;
      %       \item for~$i=1,\ldots ,\ell$, the integer~$q_i$ is coprime to~$p$;
  \item for \(i=1,\ldots,\ell-1\), the integer \(q_{i} \) is coprime to \(q_{\ell}\) when~$\ell >1$;
  \end{enumerate}
  The set \(\mathcal{S}_{p}\) is linearly independent in the topological knot concordance group~$\mathcal{C}^{\text{top}}$.
\end{theorem}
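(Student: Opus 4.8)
The plan is to suppose, for contradiction, that some nontrivial linear combination
\[
K := \bigsharp_{j} n_j \, T(p, Q^{(j)})
\]
of distinct knots in $\mathcal{S}_p$ is slice (with all $n_j \neq 0$), and to obstruct this using the Miller--Powell sliceness obstruction recalled in the introduction: if $K$ is slice, then there is a $\Z_p$-invariant metaboliser $G$ of the linking form $\lambda_p(K)$ on $H_1(\Sigma_p(K))$ such that for \emph{every} prime-power-order character $\chi$ on $H_1(\Sigma_p(K))$ vanishing on $G$, the twisted Blanchfield pairing $\operatorname{Bl}_{\alpha(p,\chi)}(K)$ is metabolic, i.e. zero in $W(\C(t),\LC)$. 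So the goal is: given such a $G$, \emph{construct} a character $\chi$ vanishing on $G$ for which $\operatorname{Bl}_{\alpha(p,\chi)}(K)$ is \emph{not} Witt-trivial.

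First I would set up the algebra. Using the satellite/connected-sum formulas for metabelian Blanchfield pairings from \cite{BorodzikConwayPolitarczyk} (the decomposition of \cite[Corollary~8.21]{BorodzikConwayPolitarczyk} and the satellite formula \cite[Theorem~8.19]{BorodzikConwayPolitarczyk}), each summand $\pm T(p,Q^{(j)})$ contributes to $\operatorname{Bl}_{\alpha(p,\chi)}(K)$ a sum of twisted Blanchfield pairings of the torus knots $T(p,q^{(j)}_i)$ appearing in its cabling sequence, with the relevant characters determined by the components of $\chi$. Correspondingly, $H_1(\Sigma_p(K))$ and $\lambda_p(K)$ split as an orthogonal sum of copies of $\pm\lambda_p(T(p,q))^{N}$ over the various torus-knot constituents. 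By Corollary~\ref{cor:TwistedAlexanderPolynomial} and Lemma~\ref{lem:Characaters}, for a character $\chi_{\mathbf a}$ on $H_1(\Sigma_p(T(p,q)))$ the twisted Alexander polynomial of $M_{T(p,q)}$ has roots determined by $\xi_q^{-a_i}$, hence concentrated at $q$-th (and $1$st) roots of unity; the coprimality hypothesis $(2)$ (that each $q_i$ is coprime to the prime $q_\ell$) is exactly what lets Proposition~\ref{prop:Splitting} separate the contribution of the ``top'' torus knot $T(p,q_\ell)$ from all the others by looking at roots of orders. This reduces the whole problem to: choose $\chi$ so that, after this root-separation, the surviving $q_\ell$-part of $\operatorname{Bl}_{\alpha(p,\chi)}(K)$ is a nonzero element of $W(\C(t),\LC)$ — which by Remark~\ref{rem:JumpsRoots} means producing a nonvanishing signature jump at some $q_\ell$-th root of unity, and this can be arranged because the $n_j$ are nonzero and distinct knots have distinct top parameters $q_\ell$ (or one argues the jumps cannot all cancel).

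The genuinely hard part, and the bulk of the argument, is the metaboliser bookkeeping: one must show that for \emph{every} $\Z_p$-invariant metaboliser $G$ of $\lambda_p(K)$ there exists an admissible character vanishing on $G$ with the above nonvanishing property. Here I would isolate, as in Lemma~\ref{lemma:constructing-characters}, a list of ``technical conditions'' on the component characters that guarantee nonmetabolicity, then split into cases according to the shape of $G$. When $G$ is not a ``graph metaboliser'' (in the sense of Section~\ref{sub:Graph}, applied to the paired-up summands $\lambda_p(T(p,q))^N \oplus -\lambda_p(T(p,q))^N$), Proposition~\ref{prop:DirectSummetaboliser} fails to apply and one has extra freedom: there is a nonzero vector in $G \cap (V_1 \oplus 0)$ or $G \cap (0 \oplus V_2)$, and the desired character is built directly (Cases~1 and~2 of that lemma). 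The subtle case is when $G$ \emph{is} a graph metaboliser: then Proposition~\ref{prop:DirectSummetaboliser}(2) forces $G$ to be the graph of a $\Z_p$-equivariant isometry $f$ between two copies of the same $\F_{q_\ell}[t]/(1+t+\cdots+t^{p-1})$-module, and one shows that either such an $f$ can be ``evaded'' by a clever choice of character, or else $f$ forces the two corresponding knot summands to be literally equal, producing a connected-summand of the form $L \,\#\, -L$ in $K$. In the latter situation $K$ is concordant to a smaller linear combination in $\mathcal{S}_p$, so one cancels such summands and induces on the number of summands; after all cancellations no graph metaboliser can be ``untreatable'', and the construction goes through. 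Throughout, I would lean on: Proposition~\ref{prop:homology-group-cover} and the prime hypothesis $(1)$ to make $H_1(\Sigma_p(T(p,q_\ell)))$ a vector space over the field $\F_{q_\ell}$ with a well-understood $\Z_p$-module structure; Proposition~\ref{prop:Splitting} for the linear-independence-in-$W(\C(t),\LC)$ step; and the fact that $p$ is a prime power so that Casson--Gordon theory and the Miller--Powell obstruction \cite[Theorem~6.10]{MillerPowell} are available. The main obstacle, as always in Casson--Gordon arguments, is precisely the graph-metaboliser case and the inductive cancellation of $L \,\#\, -L$ summands; everything else is assembling the machinery already developed in Sections~\ref{sec:branch-covers-torus}--\ref{sec:Metabolisers}.
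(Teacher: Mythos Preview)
Your overall architecture matches the paper's: Miller--Powell obstruction, satellite decomposition of the twisted Blanchfield, root-separation via Proposition~\ref{prop:Splitting}, the three-case metaboliser analysis with Proposition~\ref{prop:DirectSummetaboliser} for graphs, and inductive cancellation of $L\#-L$ summands. But there is a genuine gap in your first paragraph: you have misidentified which piece of the decomposition carries the obstruction.

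You propose to isolate ``the surviving $q_\ell$-part'' and produce a signature jump at a $q_\ell$-th root of unity. That piece is what the paper calls $B_1^\chi$, the sum of metabelian Blanchfield forms of the \emph{top} torus knots $T(p,q_\ell)$. This cannot work: $B_1^\chi$ depends only on the characters and on the single prime $r=q_\ell$, not on the lower cabling parameters $q_1,\ldots,q_{\ell-1}$. Two distinct knots in $\mathcal{S}_p$ sharing the same top prime---say $T(p,3;p,5)$ and $T(p,7;p,5)$---give indistinguishable contributions to $B_1^\chi$. Concretely, for $K=T(p,3;p,5)\#-T(p,7;p,5)$ with $p=2$ and the diagonal metaboliser, every admissible character yields $\Bl_{\alpha(2,\chi_{\mathbf a})}(T(2,5))\oplus-\Bl_{\alpha(2,\chi_{-\mathbf a})}(T(2,5))$, and since the tuples $(a,-a)$ and $(-a,a)$ lie in the same $\Z_2$-orbit this is metabolic. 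Your parenthetical ``distinct knots have distinct top parameters $q_\ell$'' is simply false, and the fallback ``the jumps cannot all cancel'' has no content here.

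The paper's obstruction lives in the \emph{lower} levels. After splitting off $B_1^\chi$ (Claim~\ref{claim:T1T3PlusT4Metabolic}, using that metabelian twisted polynomials have roots at prime-power roots of unity while classical Alexander polynomials never do), one works with the pieces $B_3^\chi(q,s)\oplus B_4(q,s)$, which are built from \emph{classical} Blanchfield forms $\Bl(T(p,q))(\xi_r^{a_u}t^{p^{s-1}})$ of the torus knots at depth $s\geq 1$. The coprimality hypothesis (2) is used here---not to separate top from bottom, but inside Proposition~\ref{prop:NotMetabolic} to ensure $\Delta_{T(p,q)}(\xi_r^{a_1}t)$ and $\Delta_{T(p,q)}(\xi_r^{a_2}t)$ have disjoint roots when $a_1\neq a_2$. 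The character conditions in Lemma~\ref{lemma:constructing-characters} are tailored so that for some lower-level $(q,s)$ the nontrivial part of the character lands entirely in $\mathcal{I}_1(q,s)$ while being trivial on $\mathcal{I}_2(q,s)$ (or vice versa); that asymmetry---which exists precisely because a simplified $K$ has different lower cabling sequences on the two sides---is what forces $B_3^\chi(q,s)\oplus B_4(q,s)$ to be non-metabolic. Your metaboliser analysis in the second paragraph is on target, but it feeds into this lower-level mechanism, not the top one.
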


To prove Theorem~\ref{thm:LinIndep}, we must obstruct the sliceness of linear combinations of knots belonging to~$\mathcal{S}_p$.
The first step, which is carried out in Subsection~\ref{sub:algebr-slice-line}, is to determine which of these linear combinations are algebraically slice.
In Subsection~\ref{sub:free-subgr-gener}, we use metabelian twisted Blanchfield pairings to obstruct the sliceness of such algebraically slice linear combinations.

\subsection{Algebraically slice linear combinations of algebraic knots}
\label{sub:algebr-slice-line}

Fix an integer~$p \geq 2$.
For~$i=1,\ldots, k$, fix sequences \(Q_{i} = (q_{i,1},q_{i,2},\ldots,q_{i,\ell_i})\) of \(\ell_{i}\) positive integers each of which is coprime to~$p$, and let \(n_{1},\ldots,n_{k} \in \Z\).
The goal of this subsection is to determine when the following knot is algebraically slice:
\begin{equation}
  \label{eq:LinearCombination}
  K = n_{1}T(p,Q_{1}) \# n_{2} T(p,Q_{2}) \# \cdots \# n_{k} T(p,Q_{k}).
\end{equation}
In order to provide a convenient criterion, we define the \(s\)-\emph{level} of \(K\) to be the following knot:
$$ \mathcal{K}_{s}(K) :=n_{1} T(p,q_{1,\ell_{1}-s}) \# n_{2} T(p,q_{2,\ell_{2}-s}) \# \ldots \# n_{k} T(p,q_{k,\ell_{k}-s}).
$$
Here, it is understood that \(T(p,q_{i}^{\ell_{i}-s})\) is the unknot \(U\) if \(\ell_{i}-s < 1\).
As an example of this notation, we see that if~$Q = (q_{1},\ldots,q_{\ell})$, then~$\mathcal{K}_{s}(T(p,Q)) =T(p,q_{\ell-s})$ for~$0 \leq s \leq \ell-1$ and~$\mathcal{K}_{s}(T(p,Q)) = U$, for~$s \geq \ell.$
In particular, the cabling formula for the classical Blanchfield form implies that
\begin{equation}
  \label{eq:DecompositionTorusKnot}
  \Bl(T(p,Q)) \cong \bigoplus_{s \geq 0}\Bl(\mathcal{K}_{s}(T(p,Q)))(t^{p^{s}}).
\end{equation}
Indeed, for a knot~$L$, the cabling formula reads as~$\Bl(L_{p,q})(t)=\Bl(T(p,q))(t)\oplus\Bl(L)(t^p)$~\cite{LivingstonMelvin}.
Next, we move on to a slightly more involved example.

\begin{example}
  \label{ex:Decompo}
  The~$s$-levels of \(J := T(p,q_{1};p,q_{2}) \# T(p,q_{3}) \# -T(p,q_{1};p,q_{3}) \# -T(p,q_{2}) \) are given~by
  \[\mathcal{K}_{s}(J) =
    \begin{cases}
      T(p,q_{2}) \# T(p,q_{3}) \# -T(p,q_{3}) \# -T(p,q_{2}),           & s = 0, \\
      T(p,q_{1}) \# -T(p,q_{1}), & s = 1, \\
      U                                                                & s \geq 2.
    \end{cases}
  \]
  Here for~$s=1$, we used that~$\mathcal{K}_{1}(J)=T(p,q_{1}) \# U \# -T(p,q_{1}) \# -U$ is~$T(p,q_{1}) \# -T(p,q_{1})$.
  In particular, observe that the formula displayed in~\eqref{eq:DecompositionTorusKnot} also holds for~$J$. 
  As we shall use in Proposition~\ref{prop:algebraic-sliceness} below, it holds for the linear combination of~\eqref{eq:LinearCombination}.
\end{example}

For later use, we note that the~$0$-level of~$K$ is the most important to us: the first homology of its~$p$-fold branched cover equals that of~$K$.
                                                                         %                                                                          It is the linear combination of torus knots obtained by picking out the last index (namely~$q_{i,\ell_i}$) in each~$Q_i$.
\begin{remark}
  \label{rem:BranchedCover0Level}
  % Since we are working with~$p$-fold branched covers of~$(p,q)$-cables, 
  Since we know that~$H_1(\Sigma_p(J_{p,q}))=H_1(\Sigma_p(T(p,q)))$ for any knot~$J$,  we deduce
  ~$$H_1(\Sigma_p(K))= H_1(\Sigma_p(\mathcal{K}_0(K)))=\bigoplus_{i=1}^k H_1(\Sigma_p(T(p,q_{i,\ell_i}))).$$
  The analogous decomposition holds for the linking form~$\lambda_p(K)$~\cite[Lemma 4]{Litherland}.
\end{remark}

The next proposition uses~$s$-levels to exhibit a criterion for the algebraic sliceness of~$K$.
\begin{proposition}\label{prop:algebraic-sliceness}
  Fix an integer \(p \geq 2\) and choose sequences of positive integers~\(Q_{i} = (q_{i,1},\ldots,q_{i,\ell_i})\) that are relatively prime to \(p\), for \(i=1,2,\ldots,k\).
  The following statements are equivalent:
  \begin{enumerate} 
  \item\label{item:algebraic-sliceness-1} the knot
    ~$K = n_{1}T(p,Q_{1}) \# \cdots \# n_{k} T(p,Q_{k})$
    is algebraically slice, 
  \item\label{item:algebraic-sliceness-2} each \(\mathcal{K}_{s}(K)\) is slice.
  \end{enumerate}
\end{proposition}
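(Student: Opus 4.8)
The plan is to prove the equivalence via the classical characterization of algebraic sliceness in terms of the Blanchfield pairing: a knot is algebraically slice if and only if its (classical, rational) Blanchfield form $\Bl$ is metabolic, equivalently if and only if its Seifert form is metabolic. Since $K$ is a connected sum of iterated torus knots, I would first record the direct-sum decomposition of $\Bl(K)$ coming from the cabling formula~\eqref{eq:DecompositionTorusKnot} (which, as noted in Example~\ref{ex:Decompo}, holds verbatim for the linear combination~\eqref{eq:LinearCombination}):
\[
  \Bl(K) \cong \bigoplus_{s \geq 0} \Bl(\mathcal{K}_s(K))(t^{p^s}).
\]
The crucial point is that the reparametrisations $t \mapsto t^{p^s}$ for distinct $s$ make the orders of the summands $\Bl(\mathcal{K}_s(K))(t^{p^s})$ pairwise coprime in $\C[t^{\pm 1}]$ (or $\Q[t^{\pm 1}]$), so the summands have distinct roots on $S^1$. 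Hence, by the analogue of Proposition~\ref{prop:Splitting}(2) for Blanchfield forms over $\Q[t^{\pm 1}]$ — i.e. a direct sum of linking forms with pairwise coprime orders is metabolic if and only if each summand is — metabolicity of $\Bl(K)$ is equivalent to metabolicity of each $\Bl(\mathcal{K}_s(K))(t^{p^s})$, which in turn is equivalent to metabolicity of $\Bl(\mathcal{K}_s(K))$ since reparametrisation by a unit-power of $t$ is an isometry-preserving operation on Witt classes.

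The direction \((\ref{item:algebraic-sliceness-2}) \Rightarrow (\ref{item:algebraic-sliceness-1})\) is then immediate: if every $\mathcal{K}_s(K)$ is slice, then each $\Bl(\mathcal{K}_s(K))$ is metabolic, hence $\Bl(K)$ is metabolic, hence $K$ is algebraically slice. For \((\ref{item:algebraic-sliceness-1}) \Rightarrow (\ref{item:algebraic-sliceness-2})\), algebraic sliceness of $K$ gives metabolicity of $\Bl(K)$, hence of each $\Bl(\mathcal{K}_s(K))$ by the coprimality argument, hence each $\mathcal{K}_s(K)$ is algebraically slice. To upgrade "algebraically slice" to "slice" for the levels, I would use that $\mathcal{K}_s(K)$ is, by construction, a linear combination of torus knots $T(p, q_{i,\ell_i - s})$ only — and a linear combination of torus knots is algebraically slice if and only if it is actually slice (indeed if and only if it is the trivial element of concordance). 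This last fact should follow from Litherland's computation of the Levine--Tristram signatures of torus knots together with the classification of the integral Witt/Seifert form: an algebraically slice connected sum of torus knots has all signatures zero, and one shows combinatorially that this forces the knot to be a connected sum of pairs $T(p,q) \# -T(p,q)$ (after cancelling, the remaining torus-knot summands have linearly independent signature functions), hence slice.

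The main obstacle I expect is precisely this last step: proving that an algebraically slice linear combination of torus knots $T(p, r_1), \dots, T(p, r_m)$ is genuinely slice, not merely algebraically slice. This requires knowing that positive torus knots $T(p,q)$ with distinct $q$ have linearly independent Levine--Tristram signature functions (Litherland~\cite{Litherland-signature}), so that any $\Q$-linear relation among the $[\sigma_{T(p,r_j)}]$ forces the coefficients to pair up the $r_j$ into cancelling pairs $r_j = r_{j'}$ with opposite signs — and then $T(p,r_j) \# -T(p,r_j)$ is slice. One subtlety to handle carefully is the bookkeeping when several of the sequences $Q_i$ have the same length or share cabling parameters, and the appearance of unknot summands when $\ell_i - s < 1$; these are harmless but must be tracked to make the "pair-up" argument rigorous. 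A second, more minor, point is to state and cite (or prove) the $\Q[t^{\pm 1}]$-version of the "coprime orders split Witt classes" lemma, which is standard but should be referenced explicitly, e.g. via the decomposition of $W(\Q(t),\Q[t^{\pm 1}])$ into $\omega$-local pieces as in~\cite[Sections 4 and 5]{BorodzikConwayPolitarczyk} or the classical work on the algebraic concordance group.
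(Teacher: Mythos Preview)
Your proposal is correct and follows essentially the same route as the paper: decompose $\Bl(K)$ via the cabling formula into the $s$-level pieces $\Bl(\mathcal{K}_s(K))(t^{p^s})$, verify these have pairwise distinct roots, apply Proposition~\ref{prop:Splitting} to split metabolicity, and then invoke Litherland's linear independence of torus-knot signature functions to upgrade algebraic sliceness of each level to genuine sliceness. One small wording point: after applying Proposition~\ref{prop:Splitting} you only know each $\Bl(\mathcal{K}_s(K))(t^{p^s})$ is metabolic over $\C[t^{\pm 1}]$, which gives vanishing signature jumps rather than algebraic sliceness per se---but since Litherland's argument runs on signature jumps this is exactly what you need, and the paper phrases it that way.
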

\begin{proof}
  We first assert that the polynomials~$\Delta_{\mathcal{K}_s(K)}(t^{p^s})$ and~$\Delta_{\mathcal{K}_{u}(K)}(t^{p^{u}})$ have distinct roots if~$s\neq~u$.
    For a positive integer~$m$, we set \(\xi_{m} := e^{ 2 \pi i/m }\).
    The roots of~\(\Delta_{T(p,q)}(t)\) occur at those \(\xi_{pq}^{a}\) where the integer \(1 \leq a \leq pq\) is such that neither~\(p\) nor~\(q\) divides \(a\), i.e. \(\left(\xi_{pq}^{a}\right)^{p} \neq 1\) and~\(\left( \xi_{pq}^{a} \right)^{q} \neq 1\).
    Consequently, the roots of~$\Delta_{T(p,q)}(t^{p^{s}})$ occur at \(\xi_{p^{s+1}q}^{a}\) such that \(1 \leq a \leq p^{s+1}q\) and neither \(p\) nor \(q\) divides \(a\).
  
We argue that if~$s\neq u$, then~$\Delta_{T(p,q_1)}(t^{p^s})$ and~$\Delta_{T(p,q_2)}(t^{p^{u}})$ have distinct roots.
    Assume to the contrary that they have a common root.
    This root must be of the form~$\xi_{p^{s+1}q_1}^a=\xi_{p^{u+1}q_2}^b$ where~$q_1,p$ (resp.~$q_2,p)$ do not divide~$a$ (resp. b).
    Without loss of generality, assume that~$s<u$ so that~$1=(\xi_{p^{s+1}q_1}^a)^{p^{s+1}q_1}=(\xi_{p^{u+1}q_2}^b)^{p^{s+1}q_1}=\xi_{p^{u-s}q_2}^{bq_1}$.
    This implies that~$p^{u-s}q_2$ divides~$bq_1$.
    However, by assumption,~$p$ divides neither~$q_1$ nor~$b$, yielding the desired contradiction.
  
Next, recall from the definition of the~$s$-level that
  ~$$ \mathcal{K}_{s}(K) :=n_{1} T(p,q_{1,\ell_{1}-s}) \# n_{2} T(p,q_{2,\ell_{2}-s}) \# \ldots \# n_{k} T(p,q_{k,\ell_{k}-s}).
  $$
 Thus, if~$s\neq u$, then~$\Delta_{\mathcal{K}_s(K)}(t^{p^s})$ and~$\Delta_{\mathcal{K}_{u}(K)}(t^{p^{u}})$ have distinct roots. 
This proves the assertion.

    Assume that~$K$ is algebraically slice. By the cabling formula for the Blanchfield pairing (see Example~\ref{ex:Decompo}), 
    \begin{equation}
      \label{eq:BlanchfieldLinearCombination}
      \Bl(K)(t) \cong \bigoplus_{s \geq 0}\Bl(\mathcal{K}_{s}(K))(t^{p^{s}})
    \end{equation}
    is metabolic. By the assertion and Proposition~\ref{prop:Splitting}, we deduce that each \(\Bl(\mathcal{K}_{s}(K))(t^{p^{s}})\) is metabolic. It follows that the jump function of each \(\Bl(\mathcal{K}_{s}(K))(t^{p^{s}})\) is trivial which is simply a reparametrization of the jump function of  \(\Bl(\mathcal{K}_{s}(K))(t)\) where the parameter~$t\in S^1$ is changed to~$t^{p^r}$. Hence~$\mathcal{K}_s(K)$ is a connected sum of torus knots such that the jump function of~$\sigma_{\omega}(\mathcal{K}_s)$ is trivial. Since  Litherland showed in \cite[Lemma~1]{Litherland-signature} that  the jump functions of~$\sigma_{\omega}(T(p,q))$ are linearly independent,~$\mathcal{K}_s(K)$ is slice as desired.

    Assume that each \(\mathcal{K}_{s}(K)\) is slice. As a linking form over~$\Z[t^{\pm 1}]$,~$\Bl(\mathcal{K}_{s}(K))$ is metabolic. 
    Combining this with the decomposition displayed in~\eqref{eq:BlanchfieldLinearCombination}, we deduce that~$\Bl(K)$ is metabolic, as a linking form over~$\Z[t^{\pm 1}]$.
    This is equivalent to~$K$ being algebraically slice~\cite{Kearton}, completing the proof of Proposition~\ref{prop:algebraic-sliceness}.
\end{proof}

When~$K$ is algebraically slice, we obtain a convenient description of the~$0$-level of~$K$.
\begin{corollary}
  \label{cor:0Level}
  Suppose that~$K$, \(p\), \(\ell_{i}\) and \(Q_{i}\), for \(i=1,\ldots,k\), 
  are as in Proposition~\ref{prop:algebraic-sliceness}.
  If~$K$ is algebraically slice, then~$k$ is even and, after renumbering if necessary, the~$0$-level of~$K$ is
  ~$$ \mathcal{K}_{0}(K) =
  \bigsharp_{j=1}^{k/2} m_j \left( T(p,q_{j,\ell_j})  \# -T(p,q_{j,\ell_j}) \right) 
  ~$$
\end{corollary}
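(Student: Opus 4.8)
The plan is to leverage Proposition~\ref{prop:algebraic-sliceness}, and in particular the analysis of its proof, together with the linear independence of the Levine-Tristram signature jumps of torus knots. First I would unwind the definition of the $0$-level: by construction,
$$\mathcal{K}_0(K) = n_1 T(p,q_{1,\ell_1}) \# n_2 T(p,q_{2,\ell_2}) \# \cdots \# n_k T(p,q_{k,\ell_k}),$$
a connected sum of (signed copies of) torus knots of the form $T(p,q)$. Since $K$ is algebraically slice, Proposition~\ref{prop:algebraic-sliceness} tells us that $\mathcal{K}_0(K)$ is slice, hence in particular algebraically slice, so its Blanchfield form is metabolic; equivalently, by Litherland's result \cite[Lemma~1]{Litherland-signature} the signature jump function $\omega \mapsto \delta\sigma_\omega(\mathcal{K}_0(K))$ vanishes identically.

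The key combinatorial step is then: the jump function of $T(p,q)$ has its jumps precisely at the primitive roots whose location determines the pair $(p,q)$; Litherland showed these jump functions (as $q$ ranges over positive integers coprime to a fixed $p$) are linearly independent over $\Z$. Therefore, grouping the summands of $\mathcal{K}_0(K)$ according to the value of $q_{i,\ell_i}$, the total coefficient of each distinct torus-knot jump function must vanish. Concretely, for each value $q$ appearing among the $q_{i,\ell_i}$, if $I_q = \{ i : q_{i,\ell_i} = q\}$ then $\sum_{i \in I_q} n_i = 0$ (where a sign is absorbed into $n_i$, so the $T(p,q)$ with $q_{i,\ell_i}=q$ could in principle appear with either orientation; but $T(p,q)$ and $-T(p,q)$ have opposite jump functions, so what must vanish is exactly $\sum_{i \in I_q} n_i$ counted with orientation). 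A signed multiset of copies of a single knot $J:=T(p,q)$ whose signed multiplicities sum to zero can be rewritten, up to reordering the summands, as $\bigsharp m\,(J \# -J)$ for a suitable $m \geq 0$. Applying this to every value $q$ that occurs, and renumbering the index set $\{1,\dots,k\}$ so that the surviving pairs $(j, \ell_j)$ with a common value of $q_{j,\ell_j}$ are listed consecutively in pairs $1,\dots,k/2$, yields the claimed form, and in particular forces $k$ to be even (each distinct value of $q_{i,\ell_i}$ contributes an even number of summands after this pairing, and summands with $q_{i,\ell_i}$ not matching any other would violate $\sum_{i\in I_q} n_i = 0$ unless $n_i = 0$, in which case that summand is discarded).

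The main obstacle is bookkeeping rather than conceptual: one must be careful that the $q_{i,\ell_i}$ are not assumed distinct, so the grouping by value of $q$ is essential, and one must track orientations correctly since $-T(p,Q_i)$ contributes $-T(p,q_{i,\ell_i})$ to the $0$-level. I would also make explicit that $\mathcal{K}_0(K)$ being \emph{slice} (not merely algebraically slice) is what the corollary's displayed formula literally records — a connected sum $\bigsharp m_j(T(p,q_{j,\ell_j}) \# -T(p,q_{j,\ell_j}))$ is slice — so the content is really the identification of the concordance type, which follows once the jump-function argument pins down the signed multiplicities. No deep input beyond Proposition~\ref{prop:algebraic-sliceness} and Litherland's linear independence lemma is needed.
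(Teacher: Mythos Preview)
Your proposal is correct and follows essentially the same approach as the paper. The paper's proof is simply: by Proposition~\ref{prop:algebraic-sliceness}, $\mathcal{K}_0(K)$ is a slice linear combination of torus knots, and since torus knots are linearly independent in the concordance group (Litherland), the conclusion follows. You have unpacked Litherland's linear independence into the underlying signature-jump argument and spelled out the bookkeeping of grouping by the value of $q_{i,\ell_i}$, but the logical structure is identical.
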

\begin{proof}
  By Proposition~\ref{prop:algebraic-sliceness},~$\mathcal{K}_{0}(K)$ is a slice linear combination of torus knots.
  Since torus knots are linearly independent in the knot concordance group, the conclusion follows. 
\end{proof}

\subsection{Linear independent families of iterated torus knots}
\label{sub:free-subgr-gener}

Fix a prime power \(p\). 
The goal of this section is to prove Theorem~\ref{thm:LinIndep} whose statement we briefly recall. 
Let~$\mathcal{S}_p$ be the set of iterated torus knots~\(T(p,Q)\), where the sequences~$Q = (q_{1},q_{2},\ldots,q_{\ell})$ of \(\ell_{i}\) positive integers are coprime to~$p$ and satisfy
\begin{enumerate}
\item $q_\ell$ is a prime;
\item for \(i=1,\ldots,\ell-1\), the integer \(q_{i} \) is coprime to \(q_{\ell}\) when~$\ell >1$;
\end{enumerate}
Theorem~\ref{thm:LinIndep} states that~$\mathcal{S}_p$ is linearly independent in the topological knot concordance group~$\mathcal{C}^{\text{top}}$.

For~$i=1,\ldots, k$, we therefore choose sequences \(Q_{i} = (q_{i,1},q_{i,2},\ldots,q_{i,\ell_i})\) of positive integers where $q_{i,\ell_i}$ is prime for all~$i$, and the integer~$q_{i,j}$ is coprime to~$p$ and to~$q_{i,\ell_i}$ for all~$j$.
  We also let  \(n_{1},\ldots,n_{k} \in \Z\) be integers.
We will use metabelian Blanchfield pairings~\cite{MillerPowell,BorodzikConwayPolitarczyk} to obstruct the sliceness of the knot
\[K = n_{1}T(p,Q_{1}) \# n_{2} T(p,Q_{2}) \# \cdots \# n_{k} T(p,Q_{k}).\]
The sliceness obstruction that we will use, and which is due to Miller-Powell~\cite[Theorem~6.10]{MillerPowell}, reads as follows.
  If for every~$\Z_p$-invariant metaboliser~$G$ of~$\lambda_p(K)$, there exists a prime power order character~$\chi$ that vanishes on~$G$ and such that~$\Bl_{\alpha(p,\chi)}(K)$ is not metabolic, then~$K$ is not slice.
  Here, we use~$\alpha(p,\chi):=\alpha_K(p,\chi)$ to denote the metabelian representation that was described in Subsection~\ref{sub:MetabRep}.

\begin{remark}
The \emph{metabelian Blanchfield pairing} is a linking form
  $$ \Bl_{\alpha(p,\chi)}(K) \colon H_1(M_K;\LC_{\alpha(p,\chi)}^p) \times H_1(M_K;\LC_{\alpha(p,\chi)}^p)  \to \C(t)/\LC, $$
  where $H_1(M_K;\LC_{\alpha(p,\chi)}^p) $ denotes the homology of the $0$-framed surgery of $K$ twisted by $\alpha(p,\chi)$.
  The precise definition of $\Bl_{\alpha(p,\chi)}(K)$ is not needed in this paper (the interested reader can nonetheless find it in~\cite{MillerPowell} and~\cite{BorodzikConwayPolitarczyk}).
  All we need is the behavior of $\Bl_{\alpha(p,\chi)}(K)$ under satellite operations, and this will be recalled as the argument proceeds.
\end{remark}

The strategy behind the proof of Theorem~\ref{thm:LinIndep} is as follows.
  \begin{enumerate}
  \item Firstly, we study the characters on~$H_1(\Sigma_p(K))$.
  \item Secondly, we study the consequences of~$\Bl_{\alpha(p,\chi)}(K)$ being metabolic.
    This will impose substantial restrictions on~$\chi$.
  \item Thirdly, we build characters that violate these restrictions.
  \item Finally, we combine these first three steps to conclude the proof.
  \end{enumerate}
  The reader that wishes to see how these steps combine might consider starting with a glance at the end of the argument, after the conclusion of the proof of Lemma~\ref{lemma:constructing-characters}; see Subsection~\ref{subsub:Conclusion}.

\subsubsection{Characters on~$H_1(\Sigma_p(K))$.}
\label{subsub:Charac}
Assume that~$K$ is slice.
The first step is to study the possible characters on the~$p$-fold branched cover of~$K$.
Since~$K$ is algebraically slice, Corollary~\ref{cor:0Level} implies that~$k$ is even and, after renumbering if necessary, for some prime~$r$ (which is one of the $q_{j,\ell_j}$) and some integers~$m_1,\ldots,m_{k/2}$, we can write
\[\mathcal{K}_{0}(K) = m_1 \left( T(p,r) \# -T(p,r) \right) \#  \bigsharp_{j=2}^{k/2} m_j \left( T(p,q_{j,\ell_j})  \# -T(p,q_{j,\ell_j}) \right), \]
where~$q_{i,\ell_i}= r$ if and only if~$1 \leq i \leq 2m_{1}$.  
It follows that if we set \(M_{j} = m_{1}+m_{2}+ \cdots + m_{j-1}\), for \(j=2,\ldots,k/2\), then after further possible renumbering, the knot~$K$ can be rewritten as
                                                                         %                                                                          so the~$k$ need not be the same as the~$k$ above.
\begin{equation}
  \label{eq:AlgebraicallySliceForm}
  K = \bigsharp_{i=1}^{m_{1}}\left( T(p,Q_{2i-1}) \# -T(p,Q_{2i}) \right) \# \bigsharp_{j=2}^{k/2} \bigsharp_{i=1}^{m_{j}} \left( T(p,Q_{2M_{j}+2i-1}) \# -T(p,Q_{2M_{j}+2i}) \right).
\end{equation}
As Remark~\ref{rem:BranchedCover0Level} implies that ~$H_{1}(\Sigma_{p}(K)) \cong H_{1}(\Sigma_{p}(\mathcal{K}_{0}(K)))$, the description of \(\mathcal{K}_{0}(K)\), the primary decomposition, and the fact that the~$q_{i,\ell_i}$ are prime shows that
\begin{align}
  \label{eq:DecompoBranchedCover}
  H_{1}(\Sigma_{p}(K)) =H_1(\Sigma_p(T(p,r)))^{m_{1}}  
  &\oplus H_1(\Sigma_p(-T(p,r)))^{m_{1}} \\
  &\oplus
    \bigoplus_{j=2}^{k/2} \left( H_1(\Sigma_p(T(p,q_{j,\ell_j})))^{m_{j}}  \oplus H_1(\Sigma_p(-T(p,q_{j,\ell_j})))^{m_{j}} \right). \nonumber
\end{align}
The linking form~$\lambda_p(K)$ on~$\Sigma_p(K)$ decomposes analogously.

From now on,~$\theta$ denotes the trivial character.
Also, since~$H_1(\Sigma_p(T(p,r))) \cong \Z_r^{p-1}$, we write characters~$H_1(\Sigma_p(T(p,r))) \to \Z_r$ as~$\chi_{\mathbf{a}}$ where~$\mathbf{a} \in \Z_r^p$.
Since $r$ is distinct from~$q_{i,\ell_i}$ for~$i>2m_1$, the decomposition of~\eqref{eq:DecompoBranchedCover} implies that any character~$\chi \colon H_{1}(\Sigma_{p}(K)) \to \Z_{r}$ must be of the form
\begin{equation}
  \label{eq:Charac}
  \chi= \bigoplus_{i=1}^{m_1} \left( \chi_{\mathbf{a}^i} \oplus \chi_{\mathbf{b}^i} \right) \oplus \bigoplus_{j=2}^{k/2}\bigoplus_{i=1}^{m_j} \theta \oplus \theta,
\end{equation}
where~$\lbrace \mathbf{a}^j \rbrace_{j=1}^{m_1}$ and~$\lbrace \mathbf{b}^j \rbrace_{j=1}^{m_1}$ are sequences of~$p$ elements in~$\Z_r$.
\begin{remark}
  \label{rem:PrimaryDecompositionMetaboliser}
  Recall that the Miller-Powell obstruction requires that for every~$\Z_p$-invariant metaboliser~$G$ of~$\lambda_p(K)$, we construct a prime power order character~$\chi$ that vanishes on~$G$ and such that~$\Bl_{\alpha(p,\chi)}(K)$ is not metabolic.
  The primary decomposition implies that every such metabolizer decomposes as a direct sum of metabolisers of the summands in~\eqref{eq:DecompoBranchedCover}.

  Consequently, thanks to the form of the character in~\eqref{eq:Charac},
    %     vanishes on last part, so can take whatever on the second part of the metaboliser.s
  it is sufficient to prove the following result: for every~$\Z_p$-invariant metaboliser~$L$ of~$\lambda_p(T(p,r))^{m_1} \oplus -\lambda_p(T(p,r))^{m_1}$, there is a prime power order character~$\bigoplus_{i=1}^{m_1} \left( \chi_{\mathbf{a}^i} \oplus \chi_{\mathbf{b}^i} \right)$ that vanishes on~$L$ and such that~$\Bl_{\alpha(p,\chi)}(K)$ is not metabolic, with $\chi$ as in~\eqref{eq:Charac}.
\end{remark}

\subsubsection{The metabelian Blanchfield pairing of~$K$.}
\label{subsub:Blanchfield}
    %     For such a character~$\chi$, 
We now study the metabelian Blanchfield pairing of~$K$.
  We first use satellite formulas to decompose it, and we then study the implications of it being metabolic.
We use~$\alpha(p,\chi):=\alpha_K(p,\chi)$ to denote the metabelian representation that was described in Subsection~\ref{sub:MetabRep}.
The behavior of metabelian Blanchfield pairings under connected sums~\cite[Corollary~8.21]{BorodzikConwayPolitarczyk} implies that~$  \Bl_{\alpha(p,\chi)}(K)$ is Witt equivalent to the following linking form:
\begin{align}
  \label{eq:ApplySatelliteFormula}
  \Bl_{\alpha(p,\chi)}(K) \sim \bigoplus_{i=1}^{m_1} & \left(\Bl_{\alpha(p,\chi_{\mathbf{a}^i})}(T(p,Q_{2i-1})) \oplus -  \Bl_{\alpha(p,\chi_{\mathbf{b}^i})}(T(p,Q_{2i})) \right) \\
                                                     & \oplus \bigoplus_{j=2}^{k/2 } \bigoplus_{i=1}^{m_j} \left( \Bl_{\alpha(p,\theta)}(T(p,Q_{2 M_{j} + 2i-1}))  \oplus -  \Bl_{\alpha(p,\theta)}(T(p,Q_{2 M_{j} + 2i}))  \right). \nonumber
\end{align}
For a sequence~$S=(q_1,\ldots,q_k)$, we use~$T(p,\widehat{S})$
to denote the iterated torus knot~$T(p,q_1;\ldots;p,q_{k-1})$.
Next, we apply the satellite formula for the metabelian Blanchfield pairing~\cite[Theorem 8.19]{BorodzikConwayPolitarczyk} to both expressions in~\eqref{eq:ApplySatelliteFormula}.
As we are working with~$p$-fold covers, and the sequences~$Q_{2i-1}$ and~$Q_{2i}$ (resp.~$Q_{2 M_{j} + 2i-1}$ and~$Q_{2 M_{j} + 2i}$) both have~$r$ (resp.~$q_{j,\ell_j}$) as the prime in last position, we claim
\begin{align}
  \label{eq:TwistedSatelliteApplication}
  \Bl_{\alpha(p,\chi)}(K)
  & \sim \bigoplus_{i=1}^{m_1}  \left(\Bl_{\alpha(p,\chi_{\mathbf{a}^i})}(T(p,r)) \oplus -  \Bl_{\alpha(p,\chi_{\mathbf{b}^i})}(T(p,r)) \right)  \nonumber \\
  &\oplus \bigoplus_{i=1}^{m_1} \bigoplus_{u=1}^p  
    \left(\Bl(T(p,\widehat{Q}_{2i-1}))(\xi_r^{\mathbf{a}_u^i}t) \oplus -  \Bl(T(p,\widehat{Q}_{2i}))(\xi_r^{\mathbf{b}_u^i}t)  \right)  \\  
  &\oplus \bigoplus_{j=2}^{k/2 } \bigoplus_{i=1}^{m_j} \left( \Bl_{\alpha(p,\theta)}(T(p,q_{j,\ell_j})))  \oplus -  \Bl_{\alpha(p,\theta)}(T(p,q_{j,\ell_j}))  \right)  \nonumber  \\
  &\oplus \bigoplus_{j=2}^{k/2 } \bigoplus_{i=1}^{m_j} \bigoplus_{u=1}^p   \left( \Bl(T(p,\widehat{Q}_{2 M_{j} + 2i-1}))(t)  \oplus -  \Bl(T(p,\widehat{Q}_{2 M_{j} + 2i}))(t) \right). \nonumber
\end{align}
The satellite formula of~\cite[Theorem~8.19]{BorodzikConwayPolitarczyk} involves the expression~$\operatorname{Bl}(K) (\xi_{q_1}^{\chi(t_Q^{i-1}q_Q(\mu_Q^{-w}\eta))} t)$, where~$\mu_Q$ denotes the meridian of the satellite knot~$Q=P_\eta(K)$ with pattern~$P$, companion~$K$ and infection curve~$\eta$; furthermore,~$q_Q\colon \pi_1(M_Q) \to H_1(\Sigma_p(Q))$ denotes the map described in~\eqref{eq:qK}.
Recalling the notations of Section~\ref{sec:branch-covers-torus}, we see that in our case,~$\eta$ coincides with the curve~$c_2$, and~$\mu_Q=\mu_{T(p,q)}$.
    %     Consequently
Thus, as explained in~\eqref{eq:PracticalForCharacter} for~$\chi=\chi_{\mathbf{a}}$, we deduce that~$\chi(t_Q^{u-1}q_Q(\mu_Q^{-w}\eta))=\mathbf{a}_u$, and this explains the second summand of~\eqref{eq:TwistedSatelliteApplication}.
The decomposition in~\eqref{eq:TwistedSatelliteApplication} is now justified, concluding the claim.

Next, we wish to apply the cabling formula~$\Bl(J_{p,q})(t)=\Bl(T(p,q))(t) \oplus \Bl(J)(t^p)$ for the classical Blanchfield pairing.
To make notations more manageable however, for \(s \geq 1\), coprime integers~$p,q$ and~$\mathbf{a} \in \Z_r^{p}$, we consider the linking form
\[\Lambda(p,q,\chi_\mathbf{a},s) := \bigoplus_{u=0}^{p-1}\Bl(T(p,q))(\xi_{r}^{p^{s-1}\mathbf{a}_{u}}t^{p^{s-1}}).\]
If the character~$\chi_{\mathbf{a}}$ is trivial, then we write~$\Lambda(p,q,s)$ instead of~$\Lambda(p,q,\theta,s)$.
These pairings appear as summands of the Blanchfield pairing of a cable. Indeed,
using these notations and the aforementioned untwisted cabling formula, we deduce from~\eqref{eq:TwistedSatelliteApplication} that 
\begin{align}
  \Bl_{\alpha(p,\chi)}(K) &\sim 
                            \bigoplus_{i=1}^{m_1} \left(\Bl_{\alpha(p,\chi_{\mathbf{a}^{i}})}(T(p,r)) \oplus -\Bl_{\alpha(p,\chi_{\mathbf{b}^{i}})}(T(p,r)) \right) \label{eq:term-one}\\
                            %%%%%%%%%%%%%%%% 
                          &\oplus \bigoplus_{j=2}^{k/2}  \bigoplus_{i=1}^{m_j} \left(\Bl_{\alpha(p,\theta)}(T(p,q_{j,\ell_{j}})) \oplus -Bl_{\alpha(p,\theta)}(T(p,q_{j,\ell_{j}})) \right) \label{eq:term-two}\\
                          %%%%%%%%%%%%%%%%%%%%%%%%%% 
                          &\oplus \bigoplus_{i=1}^{m_1} \bigoplus_{s \geq 1} \left( \Lambda(p,q_{2i-1,\ell_{2i-1}-s},\chi_{\mathbf{a}^{i}},s)  \oplus - \Lambda(p,q_{2i,\ell_{2i}-s},\chi_{\mathbf{b}^{i}},s) \right) \label{eq:term-three}\\
                          %%%%%%%%%%%%%%%%%%%%%%%% 
                          &\oplus \bigoplus_{j=1}^{k/2} \bigoplus_{i=2}^{m_j} \bigoplus_{s \geq 1} \left( \Lambda(p,q_{2M_j+2i-1,\ell_{2M_j+2i-1}-s},s) \oplus - \Lambda(p,q_{2M_j+2i,\ell_{2M_j+2i}-s},s)\right). \label{eq:term-four}
\end{align}
To simplify the notation, we respectively use \(B_1^\chi,B_2,B_3^\chi,B_4\) to denote~\eqref{eq:term-one},~\eqref{eq:term-two},~\eqref{eq:term-three} and~\eqref{eq:term-four}.
                          
Now that we have decomposed~$\Bl_{\alpha(p,\chi)}(K)$, we study the consequences of it being metabolic.

\begin{claim}
  \label{claim:T1T3PlusT4Metabolic}
  If \(\Bl_{\alpha(p,\chi)}(K)\) is metabolic, then \(B_1^\chi\) and \(B_3^\chi \oplus B_{4}\) are metabolic.
\end{claim}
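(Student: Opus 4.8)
The plan is to peel off the obviously metabolic summand $B_{2}$ and then separate the two remaining pieces using the root-disjointness criterion of Proposition~\ref{prop:Splitting}. First I would observe that $B_{2}$ is metabolic: by~\eqref{eq:term-two} it is a direct sum of linking forms of the form $X\oplus(-X)$, and since $[(V,-\lambda)]$ is the inverse of $[(V,\lambda)]$ in $W(\C(t),\LC)$, each such summand is metabolic. Consequently $\Bl_{\alpha(p,\chi)}(K)\sim B_{1}^{\chi}\oplus B_{3}^{\chi}\oplus B_{4}$ in the Witt group, so the hypothesis that $\Bl_{\alpha(p,\chi)}(K)$ is metabolic gives that $B_{1}^{\chi}\oplus B_{3}^{\chi}\oplus B_{4}$ is metabolic.

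The heart of the argument is then to apply Proposition~\ref{prop:Splitting}(2) with $(V_{1},\lambda_{1})=B_{1}^{\chi}$ and $(V_{2},\lambda_{2})=B_{3}^{\chi}\oplus B_{4}$; this immediately yields the claim once we check that $\operatorname{Ord}(B_{1}^{\chi})$ and $\operatorname{Ord}(B_{3}^{\chi}\oplus B_{4})$ have disjoint root sets. Because the order of a direct sum is the product of the orders, it suffices to locate the roots of the order of each constituent linking form. For $B_{1}^{\chi}$: the summands of~\eqref{eq:term-one} are the metabelian Blanchfield pairings of $T(p,r)$ and their inverses, whose underlying $\LC$-modules have order $\Delta_{1}^{\alpha(p,\chi_{\mathbf{a}})}(M_{T(p,r)})$; by Corollary~\ref{cor:TwistedAlexanderPolynomial} this polynomial divides $(1-t^{r})^{p-1}$, so every root of $\operatorname{Ord}(B_{1}^{\chi})$ is an $r$-th root of unity. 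For $B_{3}^{\chi}\oplus B_{4}$: by the definition of $\Lambda(p,q,\chi_{\mathbf{a}},s)$, together with the fact that the order of $\Bl(T(p,q))$ equals $\Delta_{T(p,q)}$ up to units, the order of $B_{3}^{\chi}\oplus B_{4}$ is, up to units, a product of polynomials $\Delta_{T(p,q)}(\xi_{r}^{c}t^{p^{s-1}})$ in which $s\geq 1$, $q=q_{i,j}$ is one of the non-final cabling parameters (so $\gcd(p,q)=1$), and $c$ is an integer. If $\omega$ is a root of such a factor, then $\zeta:=\xi_{r}^{c}\omega^{p^{s-1}}$ is a zero of $\Delta_{T(p,q)}$, hence $\zeta^{pq}=1$, $\zeta^{p}\neq 1$, and $\zeta^{q}\neq 1$. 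Were $\omega$ an $r$-th root of unity, then so would be $\zeta$, as $\omega^{p^{s-1}}$ and $\xi_{r}^{c}$ are both $r$-th roots of unity; since $\zeta^{p}\neq 1$ and $r$ is prime, $\zeta$ would then have order exactly $r$, and $\zeta^{pq}=1$ together with $\gcd(r,p)=1$ would force $r\mid q$ and therefore $\zeta^{q}=1$, a contradiction. Thus no root of $\operatorname{Ord}(B_{3}^{\chi}\oplus B_{4})$ is an $r$-th root of unity, so the two orders have disjoint roots.

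Proposition~\ref{prop:Splitting}(2) then applies and shows that $B_{1}^{\chi}$ and $B_{3}^{\chi}\oplus B_{4}$ are both metabolic, which is the claim. The only point requiring care, and hence the main obstacle, is the bookkeeping in the root computation: one must correctly read off the orders of the modules underlying $B_{1}^{\chi}$ and $B_{3}^{\chi}\oplus B_{4}$ from~\eqref{eq:term-one}--\eqref{eq:term-four}, Corollary~\ref{cor:TwistedAlexanderPolynomial} and the classical torus-knot Alexander polynomial, and then run the short cyclotomic argument above, which crucially uses that $r$ is prime and coprime to $p$ (so that no $r$-th root of unity is a zero of $\Delta_{T(p,q)}$ for any $q$ with $\gcd(p,q)=1$). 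Everything else is a direct invocation of results already established.
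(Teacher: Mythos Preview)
Your proof is correct and follows the same overall route as the paper: observe that $B_{2}$ is metabolic, reduce to $B_{1}^{\chi}\oplus B_{3}^{\chi}\oplus B_{4}$ being metabolic, and then apply Proposition~\ref{prop:Splitting}(2) after checking that the orders have disjoint root sets. The only difference lies in the verification of root-disjointness: the paper invokes the general fact that the classical Alexander polynomial of any knot never vanishes at a prime-power root of unity (since $|H_{1}(\Sigma_{m}(J))|=\prod_{a}\Delta_{J}(\xi_{m}^{a})$ is finite for $m$ a prime power), whereas you carry out an explicit cyclotomic argument tailored to the torus-knot polynomials $\Delta_{T(p,q)}$. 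Your argument is self-contained and avoids the external citation; the paper's argument is shorter and applies uniformly without inspecting the specific shape of the factors in $B_{3}^{\chi}\oplus B_{4}$.
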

\begin{proof}
  As~$\Bl_{\alpha(p,\chi)}(K)$ and~$B_2$ are metabolic,~$B_1^\chi \oplus (B_3^\chi \oplus B_4)$ is metabolic. By Proposition ~\ref{prop:Splitting}, it suffices to prove that the orders of  \(B_1^\chi\) and \(B_3^\chi \oplus B_{4}\) have distinct roots: the roots of the twisted polynomial occur at prime powers of unity (by Proposition~\ref{prop:TwistedAlexanderPolynomial}), while this is never the case for the classical Alexander polynomial~\cite[proof of Proposition~3.3, item~(3)]{FriedlEta}.\footnote{Here is a topological proof of this fact: for a knot~$K$ and an integer~$q$, the order of~$H_1(\Sigma_{q}(K))$ is~$\prod_{a=1}^{q-1}\Delta_K(\xi_{q}^a)$~\cite[Corollary~9.8]{LickorishIntroduction}; since~$q$ is a prime power,~$H_1(\Sigma_{q}(K))$ is a finite group, and thus none of the~$\Delta_K(\xi_{q}^a)$ can vanish.}
This proves of Claim~\ref{claim:T1T3PlusT4Metabolic}.
\end{proof}

In order to study the consequences of \(B_3^\chi \oplus B_{4}\) being metabolic, for \(s \geq 1\), we set
\begin{align*}
  B_3^\chi(s) &:= \bigoplus_{i=1}^{m_1} \left( \Lambda(p,q_{2i-1,\ell_{2i-1}-s},\chi_{\mathbf{a}^{i}},s)  \oplus - \Lambda(p,q_{2i,\ell_{2i}-s},\chi_{\mathbf{b}^{i}},s) \right), \\
  B_{4}(s) &:= \bigoplus_{j=1}^{k/2} \bigoplus_{i=2}^{m_j} \left( \Lambda(p,q_{2M_j+2i-1,\ell_{2M_j+2i-1}-s},s) \oplus - \Lambda(p,q_{2M_j+2i,\ell_{2M_j+2i}-s},s)\right).
\end{align*}
Using these forms, we derive a further consequence of~$\Bl_{\alpha(p,\chi)}(K)$ being metabolic.

\begin{claim}
  \label{claim:Ti(s)Metabolic}
  If~$B_3^\chi \oplus B_4$ is metabolic, then~$B_3^\chi(s) \oplus B_{4}(s)$ is metabolic for each~$s$.
\end{claim}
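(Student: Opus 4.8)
The plan is to decompose $B_3^\chi \oplus B_4$ according to the ``level'' $s$ and apply Proposition~\ref{prop:Splitting} one level at a time, exactly as in the proof of Claim~\ref{claim:T1T3PlusT4Metabolic}. By definition, $B_3^\chi = \bigoplus_{s \geq 1} B_3^\chi(s)$ and $B_4 = \bigoplus_{s \geq 1} B_4(s)$, so $B_3^\chi \oplus B_4 = \bigoplus_{s \geq 1}\bigl(B_3^\chi(s) \oplus B_4(s)\bigr)$. Fix $s$. To conclude that $B_3^\chi(s) \oplus B_4(s)$ is metabolic, by Proposition~\ref{prop:Splitting}(2) it is enough to show that the order of $B_3^\chi(s) \oplus B_4(s)$ and the order of $\bigoplus_{u \neq s}\bigl(B_3^\chi(u) \oplus B_4(u)\bigr)$ have distinct roots.

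First I would recall the shape of the relevant orders. Each summand of $B_3^\chi(s) \oplus B_4(s)$ is of the form $\Lambda(p,q,\chi_{\mathbf a},s) = \bigoplus_{u=0}^{p-1}\Bl(T(p,q))(\xi_r^{p^{s-1}\mathbf a_u}t^{p^{s-1}})$, whose order is a product of factors $\Delta_{T(p,q)}(\xi_r^{p^{s-1}\mathbf a_u}t^{p^{s-1}})$ (up to units in $\LC$). So the roots of the order of $B_3^\chi(s) \oplus B_4(s)$ are the roots of polynomials of the form $\Delta_{T(p,q)}(\xi_r^{c}t^{p^{s-1}})$ for various $q \in \{q_{i,j}\}$ and various $c \in \Z_r$. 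I would then adapt the root-computation from the proof of Proposition~\ref{prop:algebraic-sliceness}: the roots of $\Delta_{T(p,q)}(\xi_r^c t^{p^{s-1}})$ are the $p^{s-1}$-th roots of $\xi_r^{-c}\omega$ as $\omega$ ranges over roots of $\Delta_{T(p,q)}$, i.e.\ over $\xi_{pq}^a$ with $p \nmid a$ and $q \nmid a$. Raising such a root $\zeta$ to the power $p^{s-1}q$ gives $\xi_r^{-cq}\xi_{pq}^{aq}\cdot(\text{unit of order dividing }p)$-type expressions that are $r$-th roots of $p$-power roots of unity but are \emph{not} themselves $p$-power roots of unity, and — crucially — recording the exact order lets one separate level $s$ from level $u$ whenever $s \neq u$, since a root at level $s$ satisfies $\zeta^{p^{s-1}q\cdot(\text{power of }p)} $ having a prescribed ``$p^{s}$ versus $p^{u}$'' denominator. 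This is the same arithmetic bookkeeping with $p$-adic valuations that already appears in the proof of Proposition~\ref{prop:algebraic-sliceness}: if a common root existed between level $s$ and level $u$ with $s < u$, raising to a suitable power forces $p^{u-s}q'$ to divide $b q$ for some $b,q$ coprime to $p$, a contradiction.

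The main obstacle I anticipate is not the level-separation argument (which mirrors Proposition~\ref{prop:algebraic-sliceness}) but rather keeping track of the twists $\xi_r^{p^{s-1}\mathbf a_u}$: because $r \neq p$, these characters genuinely perturb the roots, and one must verify that this perturbation does not accidentally create a root at a $p$-power root of unity (which would ruin the separation from $B_1^\chi$ and $B_2$, though for the present claim it only matters that level $s$ and level $u$ stay disjoint) and, more to the point, that a level-$s$ twisted root cannot coincide with a level-$u$ twisted root. Since $r$ is coprime to $p$, the group generated by $\xi_r$ and a $p$-power root of unity is cyclic of order $r \cdot p^{N}$, and inside it the ``$p$-power part'' of a root is detected after raising to the power $r$ (which kills the $\xi_r$-factor). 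After doing so, $\Delta_{T(p,q)}(\xi_r^c t^{p^{s-1}})$ and $\Delta_{T(p,q')}(\xi_r^{c'} t^{p^{u-1}})$ reduce to the untwisted comparison already handled in Proposition~\ref{prop:algebraic-sliceness}, so the $r$-coprimality is exactly what lets one reduce to the known case. Thus the full argument is: decompose by $s$; for fixed $s$, note orders at distinct levels have distinct roots (by the valuation argument, after raising to the power $r$ to discard the twist); apply Proposition~\ref{prop:Splitting}(2) to conclude $B_3^\chi(s) \oplus B_4(s)$ is metabolic; and this holds for every $s$.
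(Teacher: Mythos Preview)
Your proposal is correct and follows the same approach as the paper: decompose $B_3^\chi \oplus B_4$ as $\bigoplus_{s\geq 1}\bigl(B_3^\chi(s)\oplus B_4(s)\bigr)$, argue that the orders of the summands at distinct levels have disjoint root sets, and invoke Proposition~\ref{prop:Splitting}. The paper's own proof is extremely terse --- it simply asserts the disjoint-roots property and cites Proposition~\ref{prop:Splitting} --- whereas you supply the justification by tracking the $p$-adic valuation of the order of a root and observing that the $\xi_r$-twist is harmless because $r$ is coprime to $p$. One minor comment: your phrase ``reduce to the untwisted comparison'' is slightly imprecise, since $\zeta^r$ need not literally be a root of $\Delta_{T(p,q)}(t^{p^{s-1}})$ (the condition $q\nmid ar$ can fail); the cleaner way to say what you mean is that a level-$s$ root $\zeta$ satisfies $\zeta^{p^{s-1}}=\xi_r^{-c}\xi_{pq}^a$ with $p\nmid a$, so the $p$-part of the order of $\zeta$ is exactly $p^s$, and this immediately separates the levels.
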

\begin{proof}
  By definition, we have the decompositions~$B_3^\chi = \bigoplus_{s \geq 1} B_3^\chi(s)$ and~$B_{4} = \bigoplus_{s \geq 1} B_{4}(s)$.
  For~$u \neq v$, the order of~$B_3^\chi(u) \oplus B_4(u)$ and the order of~$B_3^\chi(v) \oplus B_4(v)$ have distinct roots. By Proposition~\ref{prop:Splitting}, Claim~\ref{claim:Ti(s)Metabolic} follows.
\end{proof}

Consequently, it is sufficient to study the linking forms \(B_3^\chi(s) \oplus B_{4}(s)\), for a fixed \(s \geq 1\).
To further decompose~$B_3^\chi(s) \oplus B_4(s)$, we want to group these linking forms according to the torus knots that appear.
We also need to be attentive to the fact that the torus knot~$T(p,q_{i,\ell_i-s})$ is trivial when~$i \leq \ell_i$.
As a consequence, for \(s \geq 1\), we consider the sets 
\begin{align}
  \label{eq:I_j(q,s)}
  \mathcal{I}_{1}(q,s) &:= \{1 \leq i \leq m_{1} \ \big\vert \ \ell_{2i-1} > s, \quad q_{2i-1,\ell_{2i-1}-s}=q\},  \nonumber \\
  \mathcal{I}_{2}(q,s) &:= \{1 \leq i \leq m_{1} \ \big\vert \  \ell_{2i} > s, \quad q_{2i,\ell_{2i}-s}=q\}, \\
  \mathcal{I}_{3}(q,s) &:= \bigcup_{j=2}^{k/2} \{1 \leq i \leq m_{j} \ \big\vert \  \ell_{2M_{j} + 2i-1} > s, \quad q_{2M_{j}+2i-1,\ell_{2M_{j}+2i-1}-s}=q\}, \nonumber \\
  \mathcal{I}_{4}(q,s) &:= \bigcup_{j=2}^{k/2} \{1 \leq i \leq m_{j} \ \big\vert \  \ell_{2M_{j} + 2i} > s, \quad q_{2M_{j} + 2i,\ell_{2M_{j}+2i}-s}=q\}. \nonumber
\end{align}

Note that for some~$q$, the set~$\mathcal{I}_{i}(q,s)$ may well be empty.
However, from now on, we will implicitly assume that we only consider~$q$ for which this is not the case.
In order to study the consequences of~$B_3^\chi(s) \oplus B_4(s)$ being metabolic, we set
\begin{align*}
  B_3^\chi(q,s) &:= \bigoplus_{k \in \mathcal{I}_{1}(q,s)} \Lambda(p,q,\chi_{\mathbf{a}^{k}},s) \oplus -\bigoplus_{k \in \mathcal{I}_{2}(q,s)} \Lambda(p,q,\chi_{\mathbf{b}^{k}},s), \\
  B_4(q,s) &:= \bigoplus_{k \in \mathcal{I}_{3}(q,s)} \Lambda(p,q,s) \oplus -\bigoplus_{k \in \mathcal{I}_{4}(q,s)} \Lambda(p,q,s).
\end{align*}

Note that~$B_4(q,s)$ is not automatically metabolic as the cardinality of~$  \mathcal{I}_{3}(q,s)$ need not agree with that of~$  \mathcal{I}_{4}(q,s)$.
Observe however that if \(K\) is algebraically slice,
  Proposition~\ref{prop:algebraic-sliceness} implies that
\begin{equation}
  \label{eq:1234}
  \# \mathcal{I}_{1}(q,s) - \# \mathcal{I}_{2}(q,s) + \# \mathcal{I}_{3}(q,s) - \# \mathcal{I}_{4}(q,s) = 0.
\end{equation}
Indeed, note that the sets \(\mathcal{I}_{i}(q,s)\)
  record where \(T(p,q)\) appears in the \(s\)-level of \(K\).
Using the~$B_i(q,s)$, we now derive a further consequence of~$\Bl_{\alpha(p,\chi)}(K)$ being metabolic.

\begin{claim}
  \label{claim:Ti(q,s)Metabolic}
  If~$B_3^\chi(s) \oplus B_4(s)$ is metabolic, then~$B_3^\chi(q,s) \oplus B_4(q,s)$ is metabolic for each~$q$.
\end{claim}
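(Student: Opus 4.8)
The plan is to split the linking form $B_3^\chi(s) \oplus B_4(s)$ according to the value of the torus knot parameter $q$ appearing in its summands, and to apply Proposition~\ref{prop:Splitting} once more. First I would observe that, directly from the definitions, there is a decomposition
\[
B_3^\chi(s) \oplus B_4(s) \;=\; \bigoplus_{q} \bigl( B_3^\chi(q,s) \oplus B_4(q,s) \bigr),
\]
where the sum runs over the finitely many values of $q$ (necessarily primes coprime to $p$, since they are among the $q_{i,\ell_i}$) for which at least one of the sets $\mathcal{I}_1(q,s),\ldots,\mathcal{I}_4(q,s)$ is nonempty; indeed, the summands $\Lambda(p,q',\chi,s)$ occurring in $B_3^\chi(s)$ and $B_4(s)$ are grouped in the $B_i(q,s)$ precisely by which torus knot $T(p,q')$ is being cabled, so every summand is accounted for exactly once.

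Next I would identify the roots of the order of $B_3^\chi(q,s) \oplus B_4(q,s)$. Each $\Lambda(p,q,\chi_{\mathbf{a}},s) = \bigoplus_{u=0}^{p-1}\Bl(T(p,q))(\xi_r^{p^{s-1}\mathbf{a}_u} t^{p^{s-1}})$, so the relevant orders are (reparametrisations by $t \mapsto \xi_r^c t^{p^{s-1}}$ of) the classical Alexander polynomial $\Delta_{T(p,q)}(t)$, whose roots are the $pq$-th roots of unity that are neither $p$-th nor $q$-th roots of unity. After the substitution $t \mapsto \xi_r^c t^{p^{s-1}}$ these become certain $p^s q r$-th roots of unity; the point is that the $q$-part of their order is $q$ and nothing larger. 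Hence, arguing exactly as in the proof of the assertion at the start of Proposition~\ref{prop:algebraic-sliceness} (reduce modulo suitable powers to isolate the $q$-divisibility, using that $q$ is prime and coprime to $p$ and to $r$), the orders of $B_3^\chi(q,s) \oplus B_4(q,s)$ and of $B_3^\chi(q',s) \oplus B_4(q',s)$ have distinct roots whenever $q \neq q'$.

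Given these two ingredients, the claim follows: since $B_3^\chi \oplus B_4$ is metabolic, Claim~\ref{claim:Ti(s)Metabolic} gives that $B_3^\chi(s)\oplus B_4(s)$ is metabolic for the fixed $s$, and then repeated application of Proposition~\ref{prop:Splitting}(2) to the decomposition $B_3^\chi(s)\oplus B_4(s) = \bigoplus_q (B_3^\chi(q,s) \oplus B_4(q,s))$ — whose summands have pairwise distinct roots of their orders — shows that each $B_3^\chi(q,s)\oplus B_4(q,s)$ is metabolic. The main obstacle is the root-distinctness bookkeeping in the second step: one must keep careful track of the $p$-power, the $q$-factor, and the $\xi_r$-twist in the reparametrised Alexander polynomials, and check that two distinct primes $q\neq q'$ really cannot contribute a common root even after the substitutions $t\mapsto \xi_r^{c}t^{p^{s-1}}$; this is routine but is the only place where the coprimality hypotheses on the $q_i$ genuinely enter.
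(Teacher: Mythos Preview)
Your overall approach---decompose $B_3^\chi(s)\oplus B_4(s)$ as $\bigoplus_q\bigl(B_3^\chi(q,s)\oplus B_4(q,s)\bigr)$ and then apply Proposition~\ref{prop:Splitting}(2)---is exactly the paper's, and the first paragraph of your proposal is fine.

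However, there is a genuine error in your identification of the parameters~$q$. You write that the relevant $q$'s are ``necessarily primes coprime to $p$, since they are among the $q_{i,\ell_i}$''. They are not: by the definition of the sets $\mathcal{I}_j(q,s)$ in~\eqref{eq:I_j(q,s)} (which are only defined for $s\geq 1$), the values of $q$ that appear are the \emph{interior} cabling parameters $q_{i,\ell_i-s}$, not the final ones $q_{i,\ell_i}$. The hypotheses of Theorem~\ref{thm:LinIndep} require only that $q_{i,\ell_i}$ be prime; the earlier $q_{i,j}$ are merely positive integers coprime to $p$ and to $q_{i,\ell_i}$. So your subsequent root-distinctness argument, which explicitly invokes ``using that $q$ is prime,'' does not apply. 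Concretely, nothing in the hypotheses prevents, say, $q=15$ and $q'=5$ from both appearing as interior parameters (with $p=2$), and then $\Delta_{T(2,15)}$ and $\Delta_{T(2,5)}$ share the root~$\xi_{10}$; the $\xi_r$-twist does not help, since $r$ is coprime to both.

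For comparison, the paper's own proof is a single sentence asserting that ``since all the $q_i$ are positive'' the orders for distinct $q$ have distinct roots, and then invokes Proposition~\ref{prop:Splitting}. So the paper does not supply the detailed root analysis either; but your attempt to fill it in rests on a mistaken premise about which $q$'s are in play. If you want to repair this step you will need an argument that does not assume the interior parameters are prime or pairwise coprime.
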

\begin{proof}
  We have the decompositions~$B_3^\chi(s) = \bigoplus_{q \geq 1} B_3^\chi(q,s)$ and~$B_{4}(s) = \bigoplus_{q \geq 1} B_{4}(q,s)$.  Since all the~$q_i$ are positive, for~$u \neq v$, the order of~$B_3^\chi(u,s) \oplus B_4(u,s)$ and the order of~$B_3^\chi(v,s) \oplus B_4(v,s)$ have distinct roots. By Proposition~\ref{prop:Splitting}, Claim~\ref{claim:Ti(q,s)Metabolic} follows.
\end{proof}

Summarising the content of these claims, we have shown that if the metabelian Blanchfield pairing~$\Bl_{\alpha(p,\chi)}(K)$ is metabolic, then the linking forms~$B_3^\chi(q,s) \oplus B_4(q,s)$ are metabolic for all~$q,s$.
  This concludes the second part of the proof.

\subsubsection{Building the characters that vanish on metabolisers}
\label{subsub:BuildingCharacters}
  The third part consists in showing that for every~$\Z_p$-invariant metaboliser $L$ of~$\lambda_p(T(p,r))^{m_1} \oplus -\lambda_p(T(p,r))^{m_1}$ there are characters $\chi_{\mathbf{a}}=\bigoplus_{i=1}^{m_1} \chi_{\mathbf{a}^i}$ and $\chi_{\mathbf{b}}=\bigoplus_{i=1}^{m_1} \chi_{\mathbf{b}^i}$ such that $\chi_{\mathbf{a}} \oplus \chi_{\mathbf{b}}$ vanishes on $L$, but for which the linking forms~$B_3^\chi(q,s) \oplus B_4(q,s)$ are not all metabolic, where $\chi=\chi_{\mathbf{a}} \oplus \chi_{\mathbf{b}} \oplus \theta$ is as in~\eqref{eq:Charac}.

The next proposition describes characters for which~$B_3^\chi(q,s) \oplus B_4(q,s)$ is not metabolic.

\begin{proposition}
  \label{prop:NotMetabolic}
  Let \(q,s>0\) be positive integers with~$q$ coprime to~$p$. If a character \linebreak \(\bigoplus_{i=1}^{m_{1}}
  \chi_{\mathbf{a}^{i}} \oplus \chi_{\mathbf{b}^{i}}\) satisfies one of the following
  conditions:
  \begin{enumerate}
  \item \(\chi_{\mathbf{b}^{k}} = \theta\) for every \(k \in I_{2}(q,s)\) and
    \(\chi_{\mathbf{a}^{k_{0}}} \neq \theta\) for some \(k_{0} \in
    I_{1}(q,s)\), or,
  \item \(\chi_{\mathbf{a}^{k}} = \theta\) for every \(k \in I_{1}(q,s)\) and
    \(\chi_{\mathbf{b}^{k_{0}}} \neq \theta\) for some \(k_{0} \in
    I_{2}(q,s)\),
  \end{enumerate}
  then the linking form \(B_{3}^{\chi}(q,s) \oplus B_{4}(q,s)\) is not metabolic.
\end{proposition}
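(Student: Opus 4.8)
The plan is to pass to the Witt group $W(\C(t),\LC)$, cancel hyperbolic summands until only a ``twisted minus untwisted'' part of $B_3^\chi(q,s)\oplus B_4(q,s)$ survives, and then exhibit a single signature jump of that part which cannot vanish. It suffices to treat condition~(1): condition~(2) is handled identically after exchanging the roles of $\mathbf a$ and $\mathbf b$ and of $\mathcal I_1(q,s)$ and $\mathcal I_2(q,s)$, which, after the reduction below, replaces the relevant linking form by its Witt inverse. So assume $\chi_{\mathbf b^k}=\theta$ for all $k\in\mathcal I_2(q,s)$ and $\chi_{\mathbf a^{k_0}}\neq\theta$ for some $k_0\in\mathcal I_1(q,s)$.

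Since $\chi_{\mathbf b^k}=\theta$ we have $\Lambda(p,q,\chi_{\mathbf b^k},s)=\Lambda(p,q,s)$, and $B_4(q,s)$ is a signed sum of copies of $\Lambda(p,q,s)$; discarding hyperbolic summands $\mu\oplus-\mu$ and feeding in the identity \eqref{eq:1234} gives, in $W(\C(t),\LC)$,
\[
  B_3^\chi(q,s)\oplus B_4(q,s)\;\sim\;\bigoplus_{k\in\mathcal I_1(q,s)}\Big(\Lambda(p,q,\chi_{\mathbf a^k},s)\oplus-\Lambda(p,q,s)\Big),
\]
where $\Lambda(p,q,\chi_{\mathbf a^k},s)\oplus-\Lambda(p,q,s)=\bigoplus_{u=0}^{p-1}\big(\Bl(T(p,q))(\xi_r^{p^{s-1}\mathbf a_u^k}t^{p^{s-1}})\oplus-\Bl(T(p,q))(t^{p^{s-1}})\big)$ by definition of $\Lambda$. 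A linking form over $\LC$ is metabolic exactly when it vanishes in $W(\C(t),\LC)$, which by Remark~\ref{rem:JumpsRoots} happens exactly when all its signature jumps are trivial, so it now suffices to find $\omega\in S^1$ at which the total signature jump of the right-hand side is nonzero. Write $\delta\sigma_{T(p,q)}\colon S^1\to\Z$ for the signature jump function of $\Bl(T(p,q))$: it is supported on the roots of $\Delta_{T(p,q)}$, i.e.\ on the $pq$-th roots of unity that are neither $p$-th nor $q$-th roots of unity, and for $d\in\Z$ and $m\geq1$ the jump function of $\Bl(T(p,q))(\xi_r^{d}t^{m})$ is supported on $\{\omega\in S^1:\xi_r^{d}\omega^{m}$ is a root of $\Delta_{T(p,q)}\}$ (its precise value, including signs and complex conjugation, being irrelevant below).

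Grouping the pairs $(k,u)$ above by the residue $c:=p^{s-1}\mathbf a_u^k\bmod r$ and writing $N_c$ for the number of such pairs, the total jump at $\omega$ equals $\sum_{c\in\Z/r}N_c\,\delta\sigma_{T(p,q)}(\xi_r^{c}\omega^{p^{s-1}})-\big(p\cdot\#\mathcal I_1(q,s)\big)\delta\sigma_{T(p,q)}(\omega^{p^{s-1}})$. The key point --- and the place where the coprimality hypotheses of Theorem~\ref{thm:LinIndep} enter --- is that the functions $\omega\mapsto\delta\sigma_{T(p,q)}(\xi_r^{c}\omega^{p^{s-1}})$ for $c=0,1,\dots,r-1$ have pairwise disjoint supports: an equality $\xi_r^{c}\zeta=\xi_r^{c'}\zeta'$ with $\zeta,\zeta'$ roots of $\Delta_{T(p,q)}$ forces $\xi_r^{c-c'}$, whose order divides $pq$, to have order dividing the prime $r$, hence $c\equiv c'\bmod r$ since $r$ (a divisor of $q_\ell$) is coprime to $p$ and to $q$ (in the situation of Theorem~\ref{thm:LinIndep} the integer $q$ is one of the cabling parameters $q_{i,\ell_i-s}$ with $q_{i,\ell_i}=r$, and these are coprime to $r$ by hypothesis); disjointness then passes to preimages under $\omega\mapsto\omega^{p^{s-1}}$. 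Now $\chi_{\mathbf a^{k_0}}\neq\theta$ means, by Lemma~\ref{lem:Characaters}, that the tuple $\mathbf a^{k_0}$ is nonzero, so some $\mathbf a_{u_0}^{k_0}\not\equiv0\bmod r$; as $\gcd(p,r)=1$ the residue $c_0:=p^{s-1}\mathbf a_{u_0}^{k_0}$ is nonzero mod $r$ with $N_{c_0}\geq1$. Restricting the total jump function to the support of $\omega\mapsto\delta\sigma_{T(p,q)}(\xi_r^{c_0}\omega^{p^{s-1}})$ kills every other summand (the untwisted one included, as $c_0\neq0$), leaving $N_{c_0}\,\delta\sigma_{T(p,q)}(\xi_r^{c_0}\omega^{p^{s-1}})$; this is not identically zero because $N_{c_0}\geq1$, because $\delta\sigma_{T(p,q)}\not\equiv0$ (the torus knot $T(p,q)$ being nontrivial, $q\geq2$; see \cite{Litherland-signature}), and because $\omega\mapsto\xi_r^{c_0}\omega^{p^{s-1}}$ is onto $S^1$. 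Hence the total signature jump is nonzero somewhere, so $B_3^\chi(q,s)\oplus B_4(q,s)$ is not metabolic, as claimed.

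I expect the disjointness-of-supports step to be the main obstacle: it is precisely the manifestation of the ``$p$-independence'' coprimality hypothesis, and it is what guarantees that the single surviving twisted contribution indexed by $k_0$ is cancelled neither by the untwisted copies of $\Lambda(p,q,s)$ nor by the other twisted contributions. The remaining ingredients --- the Witt-group cancellation via \eqref{eq:1234}, the identification of the jump functions, and the reduction of condition~(2) to condition~(1) --- are routine bookkeeping.
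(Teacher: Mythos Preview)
Your proof is correct and follows essentially the same route as the paper: both reduce to case~(1), cancel hyperbolic summands via~\eqref{eq:1234} to reach the Witt equivalence $B_3^\chi(q,s)\oplus B_4(q,s)\sim\bigoplus_{k\in\mathcal I_1(q,s)}\Lambda(p,q,\chi_{\mathbf a^k},s)\oplus -pN\cdot\Bl(T(p,q))(t^{p^{s-1}})$, and then exploit that the pieces indexed by distinct residues $c\in\Z_r$ have disjoint root sets (equivalently, disjoint jump supports) because $r$ is coprime to $pq$. The only cosmetic difference is that the paper packages the endgame into Proposition~\ref{prop:Splitting} and cites~\cite[Theorem~7.1]{HeddenKirkLivingston} for the distinct-roots fact, whereas you unwind both directly via signature jumps; your self-contained verification that $\xi_r^{c-c'}=\zeta'/\zeta$ has order dividing $\gcd(r,pq)=1$ is exactly the content of that citation.
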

\begin{proof}
  We will only consider case~(1).
  In order to give the proof in case~(2) just exchange the roles of
  \(\chi_{\mathbf{a}}\) and \(\chi_{\mathbf{b}}\).
  Assume that \(\chi_{\mathbf{b}^{k}} = \theta\) for every \(k \in \mathcal{I}_{2}(q,s)\) and
  \(\chi_{\mathbf{a}^{k_{0}}} \neq \theta\) for some \(k_{0} \in
  \mathcal{I}_{1}(q,s)\).
  Since \(K\) is algebraically slice, recall from~\eqref{eq:1234} that
  \[\#\mathcal{I}_{1}(q,s) - \#\mathcal{I}_{2}(q,s) + \#\mathcal{I}_{3}(q,s) - \#\mathcal{I}_{4}(q,s) = 0,\]
  We thus define \(N := \#\mathcal{I}_{1}(q,s) = \#\mathcal{I}_{2}(q,s) - \#\mathcal{I}_{3}(q,s)+\#\mathcal{I}_{4}(q,s)\) leading to the Witt equivalence
  \begin{equation}
    \label{eq:NotMetabolic}
    B_{3}^{\chi}(q,s) \oplus B_{4}(q,s) \sim \bigoplus_{k \in I_{1}(q,s)}
    \Lambda(p,q,\chi_{\mathbf{a}^{i}},s) \oplus- \bigoplus_{i=1}^{p\cdot N}
    \Bl(T(p,q))(t^{p^{s-1}}).
  \end{equation}
We assert that the orders of the modules underlying the summands of  the right hand side of~\eqref{eq:NotMetabolic} have distinct roots.
    First, note that~$r$ is coprime to~$q$: as~$k \in \mathcal{I}_1(q,s)$, we know that~$q \in Q_i$ for some~$i<~2m_1$, and since~$Q_i=(q_{i,1},q_{i,2},\ldots,q_{i,{\ell_i-1}},r)$ for~$i<2m_1$, this follows from the assumption of Theorem~\ref{thm:LinIndep}. 
    It is known that~$\Delta_{T(p,q)}(\xi^{a_1}_rt)$ and~$\Delta_{T(p,q)}(\xi^{a_2}_rt)$ have distinct roots whenever~$a_1 \neq~a_2$ and $r$ and $q$ are coprime~\cite[Theorem~7.1]{HeddenKirkLivingston}.
    This establishes the assertion.

  Thanks to the assertion, we may apply Proposition~\ref{prop:NotMetabolic}.
    Indeed, the fact that \(\chi_{\mathbf{a}^{k_{0}}} \neq \theta\) and
  Proposition~4.3 now guarantees that the linking form on the right-hand side of~\eqref{eq:NotMetabolic} is not metabolic.
  This concludes the proof of Proposition~\ref{prop:NotMetabolic}.
\end{proof}

Before constructing the required characters, we introduce some terminology.
We say that the knot~\(K\) is \emph{simplified}, if there are no indices~\(k_{1} \in \mathcal{I}_1(q,s) \) and~\(k_{2} \in \mathcal{I}_2(q,s) \) such that \(Q_{2k_{1}-1} = Q_{2k_{2}}\).
If~\(K\) is not simplified, then it contains a slice connected summand \(T(p,Q_{2k_{1}-1}) \# -T(p,Q_{2k_{1}-1})\).

\begin{lemma}
  \label{lemma:constructing-characters}
  Let \(p\) be a prime power.
  If the knot \(K\) is simplified, then for any \(\Z_{p}\)-invariant
  metabolizer \(L \subset H_1(\Sigma_p(T(p,r)))^{m_{1}} \oplus H_1(\Sigma_p(T(p,r)))^{m_{1}}  \) there exist \(q,s\) and a character \(\ \chi_{\mathbf{a}} \oplus \chi_{\mathbf{b}}=\bigoplus_{i=1}^{m_1} \chi_{\mathbf{a}^i} \oplus \chi_{\mathbf{b}^i} \) vanishing on \(L\) such that one
  of the following conditions is satisfied:
  \begin{enumerate}
  \item \(\chi_{\mathbf{b}^{k}} = \theta\) for every \(k \in \mathcal{I}_{2}(q,s)\) and
    \(\chi_{\mathbf{a}^{k_{0}}} \neq \theta\) for some \(k_{0} \in
    \mathcal{I}_{1}(q,s)\), or,
  \item \(\chi_{\mathbf{a}^{k}} = \theta\) for every \(k \in \mathcal{I}_{1}(q,s)\) and
    \(\chi_{\mathbf{b}^{k_{0}}} \neq \theta\) for some \(k_{0} \in
    \mathcal{I}_{2}(q,s)\).
  \end{enumerate}
\end{lemma}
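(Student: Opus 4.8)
The plan is to translate conclusions~(1) and~(2) of the lemma into support conditions on elements of the metaboliser $L$, and then to run a case analysis dictated by Proposition~\ref{prop:DirectSummetaboliser}. Throughout write $V:=H_1(\Sigma_p(T(p,r)))$ and $W:=V^{m_1}$, let $V_1,\dots,V_{m_1}\subset W$ be the coordinate copies of $V$, and for $A\subseteq\{1,\dots,m_1\}$ set $V_A:=\bigoplus_{i\notin A}V_i$, so that $V_A^{\perp}=V_{A^c}$ for the form $\lambda_p(T(p,r))^{m_1}$. Since $r$ is prime, $W\oplus W$ is an $\F_r$-vector space and $\lambda:=\lambda_p(T(p,r))^{m_1}\oplus-\lambda_p(T(p,r))^{m_1}$ is non-singular; as $L=L^{\perp}$, its adjoint identifies the characters on $W\oplus W$ that vanish on $L$ with the elements of $L$ itself, a pair $(w_1,w_2)\in L$ corresponding to $\bigoplus_i\chi_{\mathbf{a}^i}\oplus\bigoplus_i\chi_{\mathbf{b}^i}$ with $\chi_{\mathbf{a}^i}=\theta\iff\pr_i(w_1)=0$ and $\chi_{\mathbf{b}^i}=\theta\iff\pr_i(w_2)=0$ (each $\lambda_p(T(p,r))$ being non-singular). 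Hence conclusion~(1) for a pair $(q,s)$ amounts to $w_2\in V_{\mathcal{I}_2(q,s)}$ and $w_1\notin V_{\mathcal{I}_1(q,s)}$, and conclusion~(2) to $w_1\in V_{\mathcal{I}_1(q,s)}$ and $w_2\notin V_{\mathcal{I}_2(q,s)}$, so the goal is to produce $(w_1,w_2)\in L$ and $(q,s)$ realising one of these.

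By Proposition~\ref{prop:DirectSummetaboliser}, either $L$ meets one of $W\oplus 0$, $0\oplus W$, or $L=\Gamma_f$ for a $\Z_p$-equivariant isometry $f$ of $(W,\lambda_p(T(p,r))^{m_1})$. In the non-graph cases, say $L\cap(W\oplus 0)\neq 0$ (the case $L\cap(0\oplus W)\neq 0$ is symmetric and yields conclusion~(2)), the identity $\pr_1(L)=(L\cap(W\oplus 0))^{\perp}$ (which holds because $L=L^{\perp}$ and $\perp$ interchanges the two summands) shows that $\pr_1(L)$ contains the coordinate subspace $V_{k}$ for every index $k$ outside the support of $L\cap(W\oplus 0)$. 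One then picks $(w_1,w_2)\in L$ whose first-factor support meets a non-degenerate index $k_0$, i.e.\ one with $\ell_{2k_0-1}\geq 2$, and a level $s$ with $k_0\in\mathcal{I}_1(q,s)$ for which the second-factor condition $w_2\in V_{\mathcal{I}_2(q,s)}$ can be arranged; the freedom to vary $k_0$, $s$ and $w_2$ (modulo $L\cap(0\oplus W)$), together with the algebraic-sliceness balance $\#\mathcal{I}_1-\#\mathcal{I}_2+\#\mathcal{I}_3-\#\mathcal{I}_4=0$ of~\eqref{eq:1234} and the simplified hypothesis (which prevents $Q_{2k_0-1}$ from occurring also as a negative-side cabling sequence), reduces this to a finite bookkeeping exercise; the degenerate indices with $\ell=1$ are absorbed the same way.

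The graph case is the crux. A character vanishing on $\Gamma_f$ is $(w,f(w))$, so we must find $w$ and $(q,s)$ with $f(w)\in V_{\mathcal{I}_2(q,s)}$, $w\notin V_{\mathcal{I}_1(q,s)}$, or the symmetric statement. As $\dim_{\F_r}V_A=(m_1-\#A)(p-1)$ and $f$ is bijective, a dimension count shows one of the two alternatives holds for $(q,s)$ whenever $f(V_{\mathcal{I}_1(q,s)})\neq V_{\mathcal{I}_2(q,s)}$; so we may assume
\[
f\bigl(V_{\mathcal{I}_1(q,s)}\bigr)=V_{\mathcal{I}_2(q,s)}\qquad\text{for all }q\text{ and all }s\geq 1 .
\]
Since $f$ is an isometry it preserves orthogonal complements, and $V_A^{\perp}=V_{A^c}$, so the display also yields $f\bigl(\bigoplus_{i\in\mathcal{I}_1(q,s)}V_i\bigr)=\bigoplus_{j\in\mathcal{I}_2(q,s)}V_j$, and intersecting over $q$ gives $f(V_{P_1(s)})=V_{P_2(s)}$ with $P_1(s)=\{i:\ell_{2i-1}>s\}$ and $P_2(s)=\{j:\ell_{2j}>s\}$. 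Because a block $V_i$ occurs in $\bigoplus_{i'\in\mathcal{I}_1(q,s)}V_{i'}$ exactly for the pairs $(q,s)=(q_{2i-1,\ell_{2i-1}-s},s)$ with $1\leq s\leq\ell_{2i-1}-1$, injectivity of $f$ forces $f(V_i)\subseteq\bigoplus_{j\in J(i)}V_j$ with $J(i):=\bigcap_{s=1}^{\ell_{2i-1}-1}\mathcal{I}_2(q_{2i-1,\ell_{2i-1}-s},s)\neq\varnothing$, and any $j\in J(i)$ then has $\ell_{2j}\geq\ell_{2i-1}$ with the last $\ell_{2i-1}$ cabling parameters of $Q_{2j}$ equal to those of $Q_{2i-1}$. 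Running the symmetric argument on $f^{-1}$ (or tracking $V_j$ at levels $s\geq\ell_{2i-1}$) upgrades this to $\ell_{2j}=\ell_{2i-1}$ and hence $Q_{2j}=Q_{2i-1}$, while the indices with $\ell_{2i-1}=1$ are matched with the $\ell_{2j}=1$ indices through $f(V_{P_1(1)})=V_{P_2(1)}$; either way a positive and a negative summand of $K$ carry the same cabling sequence, contradicting that $K$ is simplified. So the display fails for some $(q,s)$, and the desired character exists.

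The rigidity step of the last paragraph is the part I expect to be the main obstacle: turning ``$f$ is simultaneously compatible with the filtration $\{V_{P_i(s)}\}_s$ and with every level partition $\{\mathcal{I}_i(q,s)\}_q$'' into ``$\{Q_{2i-1}\}_i$ and $\{Q_{2j}\}_j$ share a sequence'' requires an induction on $s$ that peels off the deepest cabling parameters one at a time, with the small-$\ell$ indices needing separate care. By comparison, the dictionary of the first paragraph and the non-graph cases are routine.
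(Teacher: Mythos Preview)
Your overall strategy is the paper's: split via Proposition~\ref{prop:DirectSummetaboliser} into a non-graph case and a graph case, and in the latter show that if the isometry respects all the $\mathcal{I}$-block subspaces then $K$ cannot be simplified. Your adjoint dictionary identifying characters vanishing on $L$ with elements of $L$ is correct and a clean way to organise things. But two places you leave incomplete are handled much more simply in the paper, and in particular the ``main obstacle'' you flag is not one.

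\emph{Non-graph case.} You overcomplicate this. If $L\cap(W\oplus 0)\neq 0$, equivalently $\pr_1(L)\subsetneq W$, the paper just sets $\chi_{\mathbf{b}}=\theta$ and lets $\chi_{\mathbf{a}}$ be any nontrivial character factoring through $W/\pr_1(L)$. In your dictionary this is exactly the choice $(w_1,0)\in L\cap(W\oplus 0)$: since $w_2=0$ lies in every $V_{\mathcal{I}_2(q,s)}$, condition~(1) holds as soon as $w_1$ has a nonzero component at some index in some $\mathcal{I}_1(q,s)$. There is no bookkeeping, no appeal to~\eqref{eq:1234}, and no need to analyse how $\pr_1(L)$ sits among coordinate subspaces.

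\emph{Graph rigidity.} The induction you anticipate is replaced by a one-line maximality trick. Pick $k_0$ with $\ell_{2k_0-1}$ maximal and set $X(k_0)=\bigcap_s\mathcal{I}_1(q_{2k_0-1,\ell_{2k_0-1}-s},s)$ and $Y(k_0)=\bigcap_s\mathcal{I}_2(q_{2k_0-1,\ell_{2k_0-1}-s},s)$; your $J(k_0)$ is $Y(k_0)$. Intersecting the hypothesised equalities $g(S_{\mathcal{I}_1(q,s)})=S_{\mathcal{I}_2(q,s)}$ gives $g(S_{X(k_0)})=S_{Y(k_0)}$, hence $\#Y(k_0)=\#X(k_0)\geq 1$. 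For any $j\in Y(k_0)$ one already has $\ell_{2j}\geq\ell_{2k_0-1}$ and matching tail parameters, and maximality immediately forces $Q_{2j}=Q_{2k_0-1}$. No peeling of levels, no separate treatment of length-one indices, and no symmetric pass with $f^{-1}$ is needed.
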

\begin{proof}
  Fix a metabolizer \(L \subset H_1(\Sigma_p(T(p,r)))^{m_{1}} \oplus H_1(\Sigma_p(T(p,r)))^{m_{1}}  \) of~$\lambda_p(T(p,r))^{m_1} \oplus -\lambda_p(T(p,r))^{m_1}$.
  For~$i=1,2$ consider the projection 
  \(\pr_{i} \colon H_1(\Sigma_p(T(p,r)))^{m_{1}}  \oplus
  H_1(\Sigma_p(T(p,r)))^{m_{1}}  \to H_1(\Sigma_p(T(p,r)))^{m_{1}}  \) onto the \(i\)-th factor.
  The proof is divided into three separate cases.
  
  \textbf{Case 1:} \textit{\(\pr_{1}(L)\) is a proper subspace of
    \( H_1(\Sigma_p(T(p,r)))^{m_{1}} \).}
  In this case, we can define the characters~\(\chi_{\mathbf{a}}\) and
  \(\chi_{\mathbf{b}}\) as follows:  \(\chi_{\mathbf{b}} = \theta\) and  
  \[\chi_{\mathbf{a}} \colon H_1(\Sigma_p(T(p,r)))^{m_{1}} \to H_1(\Sigma_p(T(p,r)))^{m_{1}}/
    \pr_{1}(L) \xrightarrow{\text{nontrivial charater}} \Z_{r}.\]
  It is not difficult to see that \(\chi_{\mathbf{a}}\) and
  \(\chi_{\mathbf{b}}\) satisfy~(1) and are such that~$\chi_{\mathbf{a}} \oplus \chi_{\mathbf{b}}$ vanishes on~$L$.

  \textbf{Case 2:} \textit{\( \pr_{2}(L)\) is a proper subspace of
    \( H_1(\Sigma_p(T(p,r)))^{m_{1}}\).}
  In this case, we exchange the roles of~\(\chi_{\mathbf{a}}\) and~\(\chi_{\mathbf{b}}\) and repeat the argument from the first
  case.
  This way, we obtain characters \(\chi_{\mathbf{a}}\) and~\(\chi_{\mathbf{b}}\) that satisfy~(2) and are such that~$\chi_{\mathbf{a}} \oplus \chi_{\mathbf{b}}$ vanishes on~$L$.

  \textbf{Case 3:} \( \pr_{1}(L) =H_1(\Sigma_p(T(p,r)))^{m_{1}} \) and \( \pr_{2}(L) = H_1(\Sigma_p(T(p,r)))^{m_{1}}\).
  We wish to apply Proposition~\ref{prop:DirectSummetaboliser} in order to prove that~$L$ is a graph.
  We verify the hypothesis of this proposition.
  Using the assumption of Case 3 and the definition of the projections, we have
  \[0 = \ker(\pr_{1}|_{L}) = L \cap (0 \oplus H_1(\Sigma_p(T(p,r)))^{m_{1}}), \quad
    0 = \ker(\pr_{2}|_{L}) = L \cap (H_1(\Sigma_p(T(p,r)))^{m_{1}} \oplus 0).\]
  Consequently, by Proposition~\ref{prop:DirectSummetaboliser}, \(L\) is the graph of an anti-isometry
  \[g \colon (H_1(\Sigma_p(T(p,r)))^{m_{1}},\lambda_p(T(p,r))^{m_{1}}) \to
    (H_1(\Sigma_p(T(p,r)))^{m_{1}},\lambda_p(T(p,r))^{m_{1}}).\]
    For each \(q,s\) and~$j=1,2$, consider the following subsets of~$H_1(\Sigma_p(T(p,r))^{m_{1}}$
    \begin{align*}
      S_{\mathcal{I}_{j}(q,s)} 
      &= \{(v_{1},v_{2},\ldots,v_{m_{1}}) \in (H_1(\Sigma_p(T(p,r)))^{m_{1}} \colon
        v_{i} = 0, \text{ for } i \not\in \mathcal{I}_{j}(q,s)\} \\
      &= \bigoplus_{k \in    \mathcal{I}_{j}(q,s)}  H_1(\Sigma_p(T(p,Q_k)),
    \end{align*}
    where~$\mathcal{I}_j(q,s)$ is defined in \eqref{eq:I_j(q,s)}.
  
  Next, we use these sets and the anti-isometry~$g$ to describe a sufficient criterion to obtain the characters \(\chi_{\mathbf{a}},\chi_{\mathbf{b}}\) required by the statement of Lemma~\ref{lemma:constructing-characters}.
  \begin{claim}\label{claim:claim}
    If there exist \(q,s\) such that \(g(S_{\mathcal{I}_{1}(q,s)}) \neq S_{\mathcal{I}_{2}(q,s)}\), then there are 
    characters \(\chi_{\mathbf{a}},\chi_{\mathbf{b}}\) satisfying either~(1) or~(2) and such that~$\chi_{\mathbf{a}} \oplus \chi_{\mathbf{b}}$ vanishes on~$L$.
  \end{claim}
  \begin{proof}
    If \(g(S_{\mathcal{I}_{1}(q,s)}) \setminus S_{\mathcal{I}_{2}(q,s)} \neq \emptyset\), then
    choose \(v \in S_{\mathcal{I}_1(q,s)}\) such that \(g(v) \not \in
    S_{\mathcal{I}_2(q,s)}\).
    Since~$r$ is a prime,~$H_1(\Sigma_p(T(p,r)))^{m_{1}}$ is an~$\F_r$-vector space and so we obtain a direct sum decomposition~\( H_1(\Sigma_p(T(p,r)))^{m_{1}}= \langle v \rangle \oplus W\) for some~$\F_r$-vector-space~$W$.
    We can then define the characters~as
    \[\chi_{\mathbf{a}}(v) = 1, \quad \chi_{\mathbf{a}}|_{W} = \theta, \quad
      \chi_{\mathbf{b}}(x) =- \chi_{\mathbf{a}}(g^{-1}(x)).\]
    Such choices of \(\chi_{\mathbf{a}}\) and \(\chi_{\mathbf{b}}\) satisfy condition~(1). We verify that~$\chi_{\mathbf{a}} \oplus \chi_{\mathbf{b}}$ vanishes on~$L$; where we recall that $L$ is the graph of $g$.
      For an element~$(h,g(h)) \in L$ of this graph, one \linebreak has~$(\chi_{\mathbf{a}} \oplus \chi_{\mathbf{b}})(h,g(h))=\chi_{\mathbf{a}}(h)-\chi_{\mathbf{a}}(g^{-1}(g(h)))=0$.
      This concludes the proof in this~case.

If on the other hand, we assume that \(S_{\mathcal{I}_2(q,s)} \setminus g(S_{\mathcal{I}_1(q,s)}) \neq
    \emptyset\), and the argument is nearly identical.
    Choose \(v \in S_{\mathcal{I}_2(q,s)} \setminus g(S_{\mathcal{I}_1(q,s)})\) and write once
    more \( H_1(\Sigma_p(T(p,r)))^{m_{1}}= \langle v \rangle \oplus W\) and define the required characters as
    \[\chi_{\mathbf{b}}(v) = 1, \quad \chi_{\mathbf{b}}|_{W} = \theta, \quad
      \chi_{\mathbf{a}}(x) =- \chi_{\mathbf{b}}(g(x)).\]
    These choices of \(\chi_{\mathbf{a}}\) and \(\chi_{\mathbf{b}}\) satisfy    condition~(2) and  \(\chi_{\mathbf{a}} \oplus \chi_{\mathbf{b}}\) vanishes on $L$.
    This concludes the proof of the claim.
  \end{proof}
    By Claim~\ref{claim:claim}, to prove Lemma~\ref{lemma:constructing-characters}, it is enough to show that there always exist \(q,s\) such that~\(g(S_{\mathcal{I}_{1}(q,s)}) \neq S_{\mathcal{I}_{2}(q,s)}\). Assume by way of contradiction that we
    have~\(g(S_{\mathcal{I}_1(q,s)}) = S_{\mathcal{I}_2(q,s)}\) for all \(q,s\). We will show in Claim~\ref{claim:NonEmpty} below that this assumption implies that~$K$ is not simplified. This  is a contradiction since we assumed that~$K$ is simplified. This proves Lemma~\ref{lemma:constructing-characters} modulo Claim~\ref{claim:NonEmpty}.
\end{proof}

\begin{claim}
  \label{claim:NonEmpty}
  If \(g(S_{\mathcal{I}_{1}(q,s)})= S_{\mathcal{I}_{2}(q,s)}\) for all \(q,s\), then~$K$ is not simplified.
        %         ~\(Y(k_{0}) \neq \emptyset\).
\end{claim}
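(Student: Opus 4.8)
The plan is to deduce from the hypothesis that the two multisets of sequences $\{Q_{2i-1}\}_{i=1}^{m_1}$ and $\{Q_{2i}\}_{i=1}^{m_1}$ coincide, after which the failure of $K$ to be simplified is immediate. Recall that $H_1(\Sigma_p(T(p,r)))$ is an $\F_r$-vector space of dimension $p-1$, and that $H_1(\Sigma_p(T(p,Q_k)))=H_1(\Sigma_p(T(p,r)))$ for $k\le 2m_1$; write $V_k$ for this space. Each $S_{\mathcal{I}_j(q,s)}$ is then the coordinate subspace $\bigoplus_{k\in\mathcal{I}_j(q,s)}V_k$ of $H_1(\Sigma_p(T(p,r)))^{m_1}$, of dimension $(p-1)\,\#\mathcal{I}_j(q,s)$. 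First I would compute intersections of these subspaces. Fixing $i$, the pairs $(q,s)$ with $i\in\mathcal{I}_1(q,s)$ are exactly $(q_{2i-1,\,\ell_{2i-1}-s},\,s)$ for $s=1,\dots,\ell_{2i-1}-1$, and a short computation (valid also when $\ell_{2i-1}=1$, where the intersection runs over the empty index set and is all of the ambient space) shows
\[
W_i^{+}:=\bigcap_{(q,s)\,:\,i\in\mathcal{I}_1(q,s)}S_{\mathcal{I}_1(q,s)}
\;=\;\bigoplus_{j\,:\,Q_{2i-1}\text{ is a suffix of }Q_{2j-1}}V_j ,
\]
where $Q$ is a suffix of $Q'$ if the last terms of $Q'$ coincide, in order, with $Q$. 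The analogous identity holds with $\mathcal{I}_1$ replaced by $\mathcal{I}_2$ and $Q_{2j-1}$ by $Q_{2j}$. Since $g$ is a bijection, applying $g$ to the defining intersection of $W_i^{+}$ and invoking $g(S_{\mathcal{I}_1(q,s)})=S_{\mathcal{I}_2(q,s)}$ gives $g(W_i^{+})=\bigcap_{(q,s):\,i\in\mathcal{I}_1(q,s)}S_{\mathcal{I}_2(q,s)}=\bigoplus_{j\,:\,Q_{2i-1}\text{ suffix of }Q_{2j}}V_j$; comparing dimensions and dividing by $p-1$ yields
\[
\#\{\,j:Q_{2i-1}\text{ is a suffix of }Q_{2j-1}\,\}=\#\{\,j:Q_{2i-1}\text{ is a suffix of }Q_{2j}\,\}
\]
for every $i$. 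Running the same argument with $g^{-1}$ (which maps each $S_{\mathcal{I}_2(q,s)}$ onto $S_{\mathcal{I}_1(q,s)}$) gives the mirror equality, with $\{Q_{2j-1}\}$ and $\{Q_{2j}\}$ interchanged.

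Next I would upgrade these counts to an equality of multisets. Let $\nu(Q):=\#\{\,i:Q_{2i-1}=Q\,\}-\#\{\,i:Q_{2i}=Q\,\}$, a finitely supported function on the sequences ending in $r$; it is supported on $\mathcal{Q}:=\{Q_k:1\le k\le 2m_1\}$, and for every $Q$ one has
\[
\#\{\,j:Q\text{ suffix of }Q_{2j-1}\,\}-\#\{\,j:Q\text{ suffix of }Q_{2j}\,\}=\sum_{Q'\,:\,Q\text{ suffix of }Q'}\nu(Q').
\]
By the previous step the left-hand side vanishes whenever $Q\in\mathcal{Q}$. Now pick $Q^{\bullet}$ of maximal length in $\operatorname{supp}(\nu)$: any $Q'$ having $Q^{\bullet}$ as a proper suffix is strictly longer, hence outside $\operatorname{supp}(\nu)$, so the displayed sum for $Q=Q^{\bullet}$ reduces to the single term $\nu(Q^{\bullet})$; since the sum is $0$ we get $\nu(Q^{\bullet})=0$, contradicting $Q^{\bullet}\in\operatorname{supp}(\nu)$ unless $\operatorname{supp}(\nu)=\varnothing$. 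Thus $\nu\equiv0$, that is $\{Q_{2i-1}\}_{i=1}^{m_1}=\{Q_{2i}\}_{i=1}^{m_1}$ as multisets. To conclude, if some $\ell_{2i-1}\ge2$, choose $j$ with $Q_{2j}=Q_{2i-1}$ and set $q:=q_{2i-1,\,\ell_{2i-1}-1}$; then $i\in\mathcal{I}_1(q,1)$, $j\in\mathcal{I}_2(q,1)$, and $Q_{2i-1}=Q_{2j}$, so $K$ is not simplified. Otherwise every $Q_k$ with $k\le 2m_1$ equals $(r)$, and then the summand $T(p,Q_1)\#-T(p,Q_2)=T(p,r)\#-T(p,r)$ of $K$ already exhibits $K$ as not simplified.

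The main work is in establishing the intersection formula for $W_i^{+}$ and keeping the bookkeeping of the suffix relation consistent, in particular the degenerate indices with $\ell_{2i-1}=1$; once this is in place, the dimension comparison and the maximal-length vanishing argument are routine. It is worth emphasizing that only the bijectivity of $g$ is used here — neither its being an (anti-)isometry nor its $\Z_p$-equivariance plays any role in this claim.
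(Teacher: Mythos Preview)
Your argument is correct, and it rests on the same core computation as the paper's: for a fixed $i$ one intersects the coordinate subspaces $S_{\mathcal{I}_1(q,s)}$ over all $(q,s)$ with $i\in\mathcal{I}_1(q,s)$ to obtain the subspace indexed by $\{j:Q_{2i-1}\text{ is a suffix of }Q_{2j-1}\}$, then applies $g$ and compares dimensions. Where you diverge is in how this computation is exploited. The paper chooses a single index $k_0$ with $\ell_{2k_0-1}$ maximal among the $\ell_{2k-1}$, so that ``$Q_{2k_0-1}$ is a suffix of $Q_{2j-1}$'' collapses to equality; one dimension count then gives $\#X(k_0)=\#Y(k_0)$, and since $k_0\in X(k_0)$ this already produces a $j$ with $Q_{2j}=Q_{2k_0-1}$, which is all that is needed. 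You instead run the dimension count for every $i$ (and, via $g^{-1}$, for every $Q_{2i}$), package the resulting identities as the vanishing of the cumulative sums $\sum_{Q':Q\text{ suffix }Q'}\nu(Q')$, and then peel off $\nu$ itself by a maximal-length argument. This proves the stronger statement that the multisets $\{Q_{2i-1}\}$ and $\{Q_{2i}\}$ coincide, at the cost of more bookkeeping; the paper's route reaches the required single coincidence in one stroke. Your closing remark that only the bijectivity of $g$ is used is accurate and applies equally to the paper's argument. One small point: in your Case~B (all $\ell_k=1$ for $k\le 2m_1$) you conclude ``not simplified'' from the summand $T(p,r)\#-T(p,r)$, which matches the intended meaning but sits slightly awkwardly with the paper's literal definition via $\mathcal{I}_j(q,s)$ for $s\ge1$; this is a wrinkle in the paper's phrasing rather than a defect in your proof.
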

\begin{proof} We will observe that under the assumption of the claim, $K$ contains a summand of the form~\(T(p,Q_{2k_{0}-1}) \# -T(p,Q_{2k_{0}-1})\) for some integer~$k_0$. To be precise, choose 
    \(1 \leq k_{0} \leq m_{1}\) such that the length~$\ell_{2k_0-1}$ of the sequence of~$Q_{2k_0-1}$ is maximal among all the~$\ell_{2k-1}$ for~$k=1,\ldots,m_1$, and define \footnote{Note that without the maximality assumption on~$\ell_{2k_0-1}$, we would have had to replace the condition~$Q_{k_0} = Q_k$ by~$Q_{k_0} \subset Q_k$.}
    % Note that since~$K$ is algebraically slice, there is an ``\ell_{2k_0}" of the same length to compensate, so it is also maximal there.
    \begin{align*} 
      X(k_{0}) 
      &=\lbrace 1 \leq k \leq m_1 \ | \ Q_{2k_0-1} = Q_{2k-1} \rbrace 
        = \bigcap_{s=1}^{\ell_{2k_{0}-1}} \mathcal{I}_{1} (q_{2k_{0}-1,\ell_{2k_{0}-1}-s},s), \\
      Y(k_{0}) 
      &=\lbrace 1 \leq k \leq m_1 \ | \ Q_{2k_0-1} = Q_{2k} \rbrace
        =\bigcap_{s=1}^{\ell_{2k_{0}-1}} \mathcal{I}_{2} (q_{2k_{0}-1,\ell_{2k_{0}-1}-s},s).
    \end{align*}
    We will need the following properties of these sets:
    \begin{enumerate}[label=(\alph*)]
    \item\label{item:keyproperty-1} since \(k_{0} \in
      X(k_{0})\),  \(X(k_{0})\) is nonempty;
    \item\label{item:keyproperty-2} if \(j \in X(k_{0})\), then~\(T(p,Q_{2j-1}) = T(p,Q_{2k_{0}-1})\);
    \item\label{item:keyproperty-3} if~\(j \in Y(k_{0})\), then~\(T(p,Q_{2j}) = T(p,Q_{2k_{0}-1})\).
    \end{enumerate}
    It is enough to show that \(Y(k_{0}) \neq \emptyset\). By~\ref{item:keyproperty-1}--\ref{item:keyproperty-3},  this would imply that  \(K\) is not simplified since~$K$ contains a summand of the form \(T(p,Q_{2k_{0}-1}) \# -T(p,Q_{2k_{0}-1})\).

To show that \(Y(k_{0}) \neq \emptyset\),  consider the following subspaces of \(H_1(\Sigma_p(T(p,r)))^{m_{1}}\):
    \begin{align*}
      S_{X(k_{0})} 
      &:= \{(v_1,v_2,\ldots,v_{m_1}) \in H_1(\Sigma_p(T(p,r)))^{m_{1}} \colon v_i = 0
        \text{ for } i \not\in X(k_0)\}, \\
      &= \bigoplus_{k \in X(k_0)} H_1(\Sigma_p(T(p,Q_{2k-1}))),       \\
      S_{Y(k_{0})}
      &:= \{(v_1,v_2,\ldots,v_{m_1} \in H_1(\Sigma_p(T(p,r)))^{m_{1}} \colon
        v_i = 0 \text{ for } i \not\in Y(k_0)\} \\
      &= \bigoplus_{k \in Y(k_0)} H_1(\Sigma_p(T(p,Q_{2k}))).
    \end{align*}
    The advantage of writing~$X(k_0)$ and~$Y(k_0)$ as intersections of the~$\mathcal{I}_j(q_{k_0,\ell_{k_0-s}},s)$ is that the action of \(g\) on \(S_{X(k_{0})}\) can be described as
    \[
      g(S_{X(k_{0})})
      =  \bigcap_{s \geq 1} S_{g(\mathcal{I}_{1}(q_{2k_{0}-1,\ell_{2k_{0}-1}-s},s))} 
      = \bigcap_{s \geq 1} S_{\mathcal{I}_{2}(q_{2k_{0}-1,\ell_{2k_{0}-1}-s},s)} 
      = S_{Y(k_{0})},\]
    where the second equality follows from the assumption.
    % \begin{align*}
    %   &f(S_{X(k_{0})}) \\
    %   &= f \left( \bigcap_{s \geq 1} S_{\mathcal{J}_{1}(q_{2k_{0}-1,\ell_{2k_{0}-1}-s},s)} \right) = \bigcap_{s \geq 1} f \left( S_{\mathcal{J}_{1}(q_{2k_{0}-1,\ell_{2k_{0}-1}-s},s)} \right) \\
    %   &= \bigcap_{s \geq 1} S_{\mathcal{J}_{2}(q_{2k_{0}-1,\ell_{2k_{0}-1}-s},s)} = S_{Y(k_{0})}.
    % \end{align*}
    As \(g\) is an~$\F_r$-linear automorphism,~$\dim S_{X(k_{0})} = \dim S_{Y(k_{0})}$.
    Since the $\F_r$-dimension of~$H_1(\Sigma_p(T(p,r)))$ is~$p-1$, we deduce that
    \[(p-1) \# X(k_{0}) = \dim S_{X(k_{0})} = \dim S_{Y(k_{0})} = (p-1) \# Y(k_{0}).\]
    It follows that \(\# X(k_{0}) = \# Y(k_{0})\).
    Since \(X(k_{0}) \neq \emptyset\) by~\ref{item:keyproperty-1}, it follows that \(Y(k_{0}) \neq \emptyset\). As we mentioned, this implies that~$K$ is not simplified by \ref{item:keyproperty-1}--\ref{item:keyproperty-3} and Claim~\ref{claim:NonEmpty} is proved.
\end{proof}  

This concludes the third part of the proof. We can now conclude.

\subsubsection{Conclusion of the proof}
\label{subsub:Conclusion}

We can now prove Theorem~\ref{thm:LinIndep}.
\begin{proof}[Proof of Theorem~\ref{thm:LinIndep}]
  Let~$K$ be a (non-trivial) linear combination of iterated torus knots of the form~$T(p,Q_i)$ for~$i=1,\ldots, k$.
  Here, the~\(Q_{i} = (q_{i,1},q_{i,2},\ldots,q_{i,\ell_i})\) are sequences of~\(\ell_{i}\) positive integers where $q_{i,\ell_i}$ is prime for all~$i$, and the integer~$q_{i,j}$ is coprime to~$p$ and to~$q_{i,\ell_i}$ for all~$j$.
  Assume that~$K$ is slice to obtain a contradiction.
  In particular~$K$ is algebraically slice and, as we saw in~\eqref{eq:AlgebraicallySliceForm}, we can therefore assume without loss generality that it is of the form
  \begin{equation}
    \label{eq:LinearCombination}
    K =   \bigsharp_{i=1}^{m_{1}}\left( T(p,Q_{2i-1}) \# -T(p,Q_{2i}) \right) \# \bigsharp_{j=2}^{k/2} \bigsharp_{i=1}^{m_{j}} \left( T(p,Q_{2M_{j}+2i-1}) \# -T(p,Q_{2M_{j}+2i}) \right).
  \end{equation}
  Here we arranged that ~$q_{i,\ell_i}= r$ if and only if~$1 \leq i \leq 2m_{1}$. 
  Furthermore, we can assume that~$K$ is simplified by canceling terms of the form~$J \# -J$ if any such term appears in~\eqref{eq:LinearCombination}.
  We can also assume that there is an index~$i$ such that~$\ell_i>1$: otherwise~$K$ would be a linear combination of torus knots, which is impossible since the latter are linearly independent in~$\mathcal{C}^{\text{top}}$~\cite{Litherland-signature}.
  To prove that~$K$ is not slice, we saw that it is enough to show that for every~$\Z_p$-invariant metaboliser~$L$ of~$\lambda_{p}(T(p,r))^{m_1} \oplus -\lambda_{p}(T(p,r))^{m_1}$, there is a character~$\chi_{\mathbf{a}} \oplus \chi_{\mathbf{b}}=\bigoplus_{k=1}^{m_1} \left( \chi_{\mathbf{a}^k} \oplus \chi_{\mathbf{b}^k} \right)$ that vanishes on~$L$ such that~$\Bl_{\alpha(p,\chi)}(K)$ is not metabolic, where~$\chi=\chi_{\mathbf{a}} \oplus \chi_{\mathbf{b}} \oplus \bigoplus_{j=2}^{k/2} \bigoplus_{i=1}^{m_j} \theta \oplus \theta$; recall Remark~\ref{rem:PrimaryDecompositionMetaboliser}.
  We then applied satellite formulas to show that~$\Bl_{\alpha(p,\chi)}(K)$ decomposes (up to Witt equivalence) as
  \begin{align*}
    \Bl_{\alpha(p,\chi)}(K) 
    &\sim B_1^\chi \oplus B_2 \oplus B_3^\chi \oplus B_4 \\
    &=B_1^\chi \oplus B_2 \oplus \bigoplus_{q,s} B_3^\chi(q,s) \oplus \bigoplus_{q,s} B_4(q,s).
  \end{align*}
  Claim~\ref{claim:T1T3PlusT4Metabolic} shows that if~$\Bl_{\alpha(p,\chi)}(K)$ is metabolic, then~$B_1^\chi$ and~$B_3^\chi \oplus B_4$ are metabolic.
  By Claims~\ref{claim:Ti(s)Metabolic} and~\ref{claim:Ti(q,s)Metabolic}, it follows that~$B_3^\chi(q,s) \oplus B_3^\chi(q,s)$ must be metabolic \textit{for all}~$q,s$ and all characters~$\chi_{\mathbf{a}} \oplus \chi_{\mathbf{b}}$.
  On the other hand, as the knot~$K$ is simplified, Lemma~\ref{lemma:constructing-characters} implies that for any \(\Z_{p}\)-invariant
  metabolizer \(L \subset H_1(\Sigma_p(T(p,r)))^{m_{1}} \oplus H_1(\Sigma_p(T(p,r)))^{m_{1}}  \) there exist \(q,s\) and a character \(
  \chi_{\mathbf{a}} \oplus \chi_{\mathbf{b}}\) vanishing on \(L\) such that one
  of the following conditions is satisfied:
  \begin{enumerate}
  \item \(\chi_{\mathbf{b}^{k}} = \theta\) for every \(k \in \mathcal{I}_{2}(q,s)\) and
    \(\chi_{\mathbf{a}^{k_{0}}} \neq \theta\) for some \(k_{0} \in
    \mathcal{I}_{1}(q,s)\), or,
  \item \(\chi_{\mathbf{a}^{k}} = \theta\) for every \(k \in \mathcal{I}_{1}(q,s)\) and
    \(\chi_{\mathbf{b}^{k_{0}}} \neq \theta\) for some \(k_{0} \in
    \mathcal{I}_{2}(q,s)\).
  \end{enumerate}  
  Applying Proposition~\ref{prop:NotMetabolic}, we deduce that for such characters and such integers~$q,s$, the linking form~$B_3^\chi(q,s) \oplus B_4(q,s)$ is not metabolic.
  This is the desired contradiction, and Theorem~\ref{thm:LinIndep} is proved.
\end{proof}

\bibliography{BiblioAlgebraicKnots}

\begin{thebibliography}{10}

\bibitem{AbeTagami}
Tetsuya Abe and Keiji Tagami.
\newblock Fibered knots with the same 0-surgery and the slice-ribbon
  conjecture.
\newblock {\em Math. Res. Lett.}, 23(2):303--323, 2016.

\bibitem{Baker}
Kenneth~L. Baker.
\newblock A note on the concordance of fibered knots.
\newblock {\em J. Topol.}, 9(1):1--4, 2016.

\bibitem{BorodzikConwayPolitarczyk}
Maciej Borodzik, Anthony Conway, and Wojciech Politarczyk.
\newblock {Twisted Blanchfield pairings, twisted signatures and Casson-Gordon
  invariants}, 2018.

\bibitem{CassonGordon1}
Andrew~J. Casson and Cameron~McA. Gordon.
\newblock On slice knots in dimension three.
\newblock In {\em Algebraic and geometric topology \textup{(}{P}roc. {S}ympos.
  {P}ure {M}ath., {S}tanford {U}niv., {S}tanford, {C}alif., 1976\textup{)},
  {P}art 2}, Proc. Sympos. Pure Math., XXXII, pages 39--53. Amer. Math. Soc.,
  Providence, R.I., 1978.

\bibitem{CassonGordon2}
Andrew~J. Casson and Cameron~McA. Gordon.
\newblock Cobordism of classical knots.
\newblock In {\em \`A la recherche de la topologie perdue}, volume~62 of {\em
  Progr. Math.}, pages 181--199. Birkh\"auser Boston, Boston, MA, 1986.
\newblock With an appendix by P. M. Gilmer.

\bibitem{FriedlEta}
Stefan Friedl.
\newblock Eta invariants as sliceness obstructions and their relation to
  {C}asson-{G}ordon invariants.
\newblock {\em Algebr. Geom. Topol.}, 4:893--934, 2004.

\bibitem{FriedlVidussiSurvey}
Stefan Friedl and Stefano Vidussi.
\newblock A survey of twisted {A}lexander polynomials.
\newblock In {\em The mathematics of knots}, volume~1 of {\em Contrib. Math.
  Comput. Sci.}, pages 45--94. Springer, Heidelberg, 2011.

\bibitem{Hatcher}
Allen Hatcher.
\newblock {\em Algebraic topology}.
\newblock Cambridge University Press, Cambridge, 2002.

\bibitem{HeddenSomeRemarks}
Matthew Hedden.
\newblock Some remarks on cabling, contact structures, and complex curves.
\newblock In {\em Proceedings of {G}\"{o}kova {G}eometry-{T}opology
  {C}onference 2007}, pages 49--59. G\"{o}kova Geometry/Topology Conference
  (GGT), G\"{o}kova, 2008.

\bibitem{HeddenOnKnotFloer}
Matthew Hedden.
\newblock On knot {F}loer homology and cabling. {II}.
\newblock {\em Int. Math. Res. Not. IMRN}, (12):2248--2274, 2009.

\bibitem{HeddenNotions}
Matthew Hedden.
\newblock Notions of positivity and the {O}zsv\'{a}th-{S}zab\'{o} concordance
  invariant.
\newblock {\em J. Knot Theory Ramifications}, 19(5):617--629, 2010.

\bibitem{HeddenKirkLivingston}
Matthew Hedden, Paul Kirk, and Charles Livingston.
\newblock Non-slice linear combinations of algebraic knots.
\newblock {\em J. Eur. Math. Soc. \textup{(}JEMS\textup{)}}, 14(4):1181--1208,
  2012.

\bibitem{HeraldKirkLivingston}
Christopher Herald, Paul Kirk, and Charles Livingston.
\newblock Metabelian representations, twisted {A}lexander polynomials, knot
  slicing, and mutation.
\newblock {\em Math. Z.}, 265(4):925--949, 2010.

\bibitem{HomANoteOnCabling}
Jennifer Hom.
\newblock A note on cabling and {$L$}-space surgeries.
\newblock {\em Algebr. Geom. Topol.}, 11(1):219--223, 2011.

\bibitem{Kearton}
Cherry Kearton.
\newblock Cobordism of knots and {B}lanchfield duality.
\newblock {\em J. London Math. Soc. (2)}, 10(4):406--408, 1975.

\bibitem{KirkLivingston}
Paul Kirk and Charles Livingston.
\newblock Twisted {A}lexander invariants, {R}eidemeister torsion, and
  {C}asson-{G}ordon invariants.
\newblock {\em Topology}, 38(3):635--661, 1999.

\bibitem{Kitano}
Teruaki Kitano.
\newblock Twisted {A}lexander polynomial and {R}eidemeister torsion.
\newblock {\em Pacific J. Math.}, 174(2):431--442, 1996.

\bibitem{LickorishIntroduction}
William~B.R. Lickorish.
\newblock {\em An introduction to knot theory}, volume 175 of {\em Graduate
  Texts in Mathematics}.
\newblock Springer-Verlag, New York, 1997.

\bibitem{Litherland-signature}
Richard~A. Litherland.
\newblock Signatures of iterated torus knots.
\newblock In {\em Topology of low-dimensional manifolds \textup{(}{P}roc.
  {S}econd {S}ussex {C}onf., {C}helwood {G}ate, 1977\textup{)}}, volume 722 of
  {\em Lecture Notes in Math.}, pages 71--84. Springer, Berlin, 1979.

\bibitem{Litherland}
Richard~A. Litherland.
\newblock Cobordism of satellite knots.
\newblock In {\em Four-manifold theory \textup{(}{D}urham, {N}.{H}., 1982},
  volume~35 of {\em Contemp. Math.}, pages 327--362. Amer. Math. Soc.,
  Providence, RI, 1984.

\bibitem{LivingstonMelvinAlgebraicKnots}
Charles Livingston and Paul Melvin.
\newblock Algebraic knots are algebraically dependent.
\newblock {\em Proc. Amer. Math. Soc.}, 87(1):179--180, 1983.

\bibitem{LivingstonMelvin}
Charles Livingston and Paul Melvin.
\newblock Abelian invariants of satellite knots.
\newblock In {\em Geometry and topology \textup{(}{C}ollege {P}ark, {M}d.,
  1983/84\textup{)}}, volume 1167 of {\em Lecture Notes in Math.}, pages
  217--227. Springer, Berlin, 1985.

\bibitem{MillerPowell}
Allison~N. Miller and Mark Powell.
\newblock Symmetric chain complexes, twisted blanchfield pairings, and knot
  concordance.
\newblock {\em Algebr. Geom. Topol.}, 18(6):3425--3476, 2018.

\bibitem{Miyazaki}
Katura Miyazaki.
\newblock Nonsimple, ribbon fibered knots.
\newblock {\em Trans. Amer. Math. Soc.}, 341(1):1--44, 1994.

\bibitem{OSS}
Peter~S. Ozsv\'{a}th, Andr\'{a}s~I. Stipsicz, and Zolt\'{a}n Szab\'{o}.
\newblock Concordance homomorphisms from knot {F}loer homology.
\newblock {\em Adv. Math.}, 315:366--426, 2017.

\bibitem{Rudolph}
Lee Rudolph.
\newblock How independent are the knot-cobordism classes of links of plane
  curve singularities\textup{?}
\newblock {\em Notices Amer. Math. Soc.}, 23:410, 1976.

\bibitem{Tange}
Motoo Tange.
\newblock Upsilon invariants of {L}-space cable knots.
\newblock ar{X}iv:1703.08828, 2017.

\end{thebibliography}
\bibliographystyle{plain}

\end{document}